\begin{document}

%specific macros
\theoremstyle{plain}
\newtheorem{thm}{Theorem}[section]
\newtheorem{lem}[thm]{Lemma}
\newtheorem{prop}[thm]{Proposition}
\newtheorem{cor}[thm]{Corollary}
\theoremstyle{definition}
\newtheorem{defn}[thm]{Definition}
\newtheorem{remark}{Remark}
\newtheorem{condition}[thm]{Condition}
\newtheorem{example}[thm]{Example}

\newcommand{\vfirstx}{\vec{x}^1}
\newcommand{\vsecondx}{\vec{x}^2}

\newcommand{\sqf}{square-free}
\newcommand{\Sqf}{Square-free}

\newcommand{\map}[1]{\tilde{#1}}
%\newcommand{\per}[3]{\mathcal P_{(\vec{#1},#2)}(#3[t]^d)}
%1 -field, 2- d, 3 - vector, 4- matrix
\newcommand{\per}[4]{\mathcal P_{#1,#2}(\vec{#3},#4)}

\newcommand{\persimple}[5]{\mathcal P_{#1,#2}(\vec{#3},\vec{#4},\vec{#5})}
\newcommand{\persimplelong}[5]{\persing{#1}{#3_{1}}{#4_{1}}{#5_{1}}\times \dots \times \persing{#1}{#3_{#2}}{#4_{#2}}{#5_{#2}} }

\newcommand{\multP}[3]{P_{f,\vec{#1},\vec{#2},\vec{#3}}}

%1 -field, 2- d, 3 - polynomial
\newcommand{\squaref}[3]{\mathcal S_{#1,#2}(#3)}
%1 - name 2- rows 3 - columns
%1 -field, 2- a, 3 - b, 4- c
\newcommand{\persing}[4]{\mathcal P_{#1}(#2,#3,#4)}
%1 -field, 2 - polynomial
\newcommand{\squarefsing}[2]{\mathcal S_{#1}(#2)}
%1 - name 2- rows 3 - columns

\newcommand{\gmatrix}[3]{\begin{pmatrix}
 #1_{1,1} & #1_{1,2} & \cdots & #1_{1,#3} \\
 #1_{2,1} & #1_{2,2} & \cdots & #1_{2,#3} \\
 \vdots   & \vdots   & \ddots & \vdots && \\
 #1_{#2,1}& #1_{#2,2}& \cdots & #1_{#2,#3}
\end{pmatrix}}
%1 - name 2- length
\newcommand{\gvector}[2]{\begin{pmatrix}
#1_{1} \\
#1_{2}\\
\vdots \\
#1_{#2}
\end{pmatrix}}
%1 - name 2-start 3- end
\newcommand{\gvectorstart}[3]{\begin{pmatrix}
#1_{#2}\\
\vdots \\
#1_{#3}
\end{pmatrix}}
% 1 - function 2 - variable
\newcommand{\partiald}[2]{\frac{\partial {#1}}{\partial #2}}
% 1 - function 2 - variable
\newcommand{\ordinaryd}[2]{\frac{\d {#1}}{\d #2}}
%1 - base ring, 2- d, 3 - var
\newcommand{\polring}[3]{#1[#3_1,\dots,#3_{#2}]}
\newcommand{\polfield}[3]{#1(#3_1,\dots,#3_{#2})}
%1 between %2 and %3
\newcommand{\btwn}[3]{{#1}\;,{#2} \leq {#1} \leq {#3}}

\newcommand{\av}{a}
\newcommand{\bv}{b}
\newcommand{\cv}{c}

\newcommand{\uv}{u}

\newcommand{\bbox}{\operatorname{Box}}

\newcommand{\rank}{\operatorname{Rank}}
\newcommand{\spanv}{\operatorname{Span}}
\newcommand{\image}{\operatorname{Image}}
\newcommand{\chr}{\operatorname{Char}}

\numberwithin{equation}{section}

\newcommand{\Z}{{\mathbb Z}} %cph changed from \mathbf
\newcommand{\Q}{{\mathbb Q}}
\newcommand{\R}{{\mathbb R}}
\newcommand{\C}{{\mathbb C}}
\newcommand{\N}{{\mathbb N}}
\newcommand{\FF}{{\mathbb F}}
\newcommand{\fe}{\overline{\mathbb F}}
\newcommand{\fq}{\mathbb{F}_q}
\newcommand{\feq}{\overline{\mathbb F}_q}

\newcommand{\rmk}[1]{\footnote{{\bf Comment:} #1}}

\renewcommand{\mod}{\;\operatorname{mod}}
\newcommand{\ord}{\operatorname{ord}}
\newcommand{\TT}{\mathbb{T}}
\renewcommand{\i}{{\mathrm{i}}}
\renewcommand{\d}{{\mathrm{d}}}
\newcommand{\HH}{\mathbb H}
\newcommand{\Vol}{\operatorname{vol}}
\newcommand{\area}{\operatorname{area}}
\newcommand{\tr}{\operatorname{tr}}
\newcommand{\norm}{\mathcal N} % norm =(\frac{ n+\sqrt{n^2-4}} 2)^2
\newcommand{\intinf}{\int_{-\infty}^\infty}
\newcommand{\ave}[1]{\left\langle#1\right\rangle} %  average
\newcommand{\Var}{\operatorname{Var}}
\newcommand{\Prob}{\operatorname{Prob}}
\newcommand{\sym}{\operatorname{Sym}}
\newcommand{\disc}{\operatorname{disc}}
\newcommand{\CA}{{\mathcal C}_A}
\newcommand{\cond}{\operatorname{cond}} % conductor
\newcommand{\lcm}{\operatorname{lcm}}
\newcommand{\Kl}{\operatorname{Kl}} %Kloosterman sum
\newcommand{\leg}[2]{\left( \frac{#1}{#2} \right)}  % Legendre symbol

\newcommand{\sumstar}{\sideset \and^{*} \to \sum}

\newcommand{\LL}{\mathcal L} %L-function of u
\newcommand{\sumf}{\sum^\flat}
\newcommand{\Hgev}{\mathcal H_{2g+2,q}}
\newcommand{\USp}{\operatorname{USp}}
\newcommand{\conv}{*}
\newcommand{\dist} {\operatorname{dist}}
\newcommand{\CF}{c_0} % Fejer constant
\newcommand{\kerp}{\mathcal K}

\newcommand{\fs}{\mathfrak S}
\newcommand{\rest}{\operatorname{Res}} % resultant
\newcommand{\af}{\mathbb A} % affine line
\newcommand{\Ht}{\operatorname{Ht}}
\newcommand{\monic}{\mathcal M}

\title
[Square-free values of polynomials in linear sparse sets] {Square-free values of multivariate polynomials over function fields in linear sparse sets}
\author{Shai Rosenberg}
\address{Raymond and Beverly Sackler School of Mathematical Sciences,
Tel Aviv University, Tel Aviv 69978, Israel}
\email{shairos1@mail.tau.ac.il}

\maketitle

\begin{abstract}
Let $f \in \fq[t][x]$ be a \sqf{} polynomial where $\fq$ is a field of $q$ elements. We view $f$ as a polynomial in the variable $x$ with coefficients in the ring $\fq[t]$. We study \sqf{} values of $f$ in sparse subsets of $\fq[t]$ which are given by a linear condition. The motivation for our study is an analogue problem of representing \sqf{} integers by integer polynomials, where it is conjectured that setting aside some simple exceptional cases, a \sqf{} polynomial $f \in \Z[x]$ takes infinitely many square-free values. Let $\kappa \in \N$ be co-prime to $q$, and let $\gamma_1,\dots,\gamma_{\kappa-1},\gamma_{\kappa+1}\dots,\gamma_m \in \fq$. A consequence of the main result we show, is that if $q$ is sufficiently large with respect to $\deg_x f, \deg_t f$ and $m$, then there exist $\gamma_0,\gamma_{\kappa} \in \fq$ such that $f\left (t,\sum_{i=0}^m \gamma_i t^i \right )$ is \sqf{}. Moreover, as $q \to \infty$, the last is true for almost all $\gamma_0,\gamma_{\kappa} \in \fq$. The main result shows that a similar result holds also for other cases. We then generalize the results to multivariate polynomials.
\end{abstract}

\section{Introduction}

Let $f \in \fq[t][x]$ where $\fq$ is a field of $q$ elements and $p=\chr(\fq)$. We consider $f$ as a univariate polynomial in $x$ where its coefficients lay in the ring $\fq[t]$. The result of substituting the variable $x$ with an element in the base ring $\fq[t]$, is a polynomial in $\fq[t]$, i.e. for any $\uv \in \fq[t]$, $f(t,\uv(t)) \in \fq[t]$. A polynomial is said to be \sqf{} if it does not have a nonconstant square divisor. If there exists $\uv \in \fq[t]$ such that $f(t,\uv(t))$ is \sqf{}, then $f$ is said to have a \sqf{} value at $\uv$. Given a polynomial $f \in \fq[t][x]$, we are motivated by the question of whether $f$ has \sqf{} values. Moreover, we would like to estimate the number of \sqf{} values of $f$ and show that it is large in some sense. If $f$ is not \sqf{} then we can not expect $f$ to have many \sqf{} values. This is because if $g^2$ divides $f$ where $g \in \fq[t][x]$ is a nonconstant polynomial, then for any $\uv \in \fq[t]$ such that $\deg g(t,\uv(t)) > 0$, $g(t,\uv(t))^2$ is a nonconstant square factor of $f(t,\uv(t))$.

Hence we require $f$ to be a \sqf{} polynomial. A natural question is then whether this condition is sufficient, i.e. whether a \sqf{} polynomial always has \sqf{} values.

This question may be viewed as a function field analogue of a known open conjecture which concerns polynomials over $\mathbb{Z}$. In the analogue question, instead of considering $f$ as a polynomial over $\fq[t]$, $f$ is considered as a polynomial over $\Z$. The conjecture is that setting aside some simple exceptional cases, given a \sqf{} polynomial $f \in \Z[x]$ there are infinitely many $n \in \N$ such that $f(n)$ is a \sqf{} number, and moreover, the set of \sqf{} values of $f$ has positive density.

The case where $f$ is quadratic was solved by Ricci \cite{Ricci}. For the case where $f$ is cubic, Erd\"{o}s \cite{Erdos} showed that there are infinitely many \sqf{} values, and Hooley \cite{Hooley 1967} showed that the set of \sqf{} values has positive density. Granville \cite{Granville} showed that assuming the ABC conjecture the problem is completely settled.

Returning to the question over function fields, a quantitative statement of the question is to estimate the number of polynomials $\uv \in \fq[t]$ such that $f(t,\uv(t))$ is \sqf{}. This can be asked in the context of two limits. One is to fix a polynomial $f$ and count number of $\uv \in \fq[t]$ of degree $m$ such that $f(t,\uv(t))$ is \sqf{} while $m$ tends to infinity. The other limit is to fix $m$ and count the number of $\uv \in \fq[t]$ of degree $m$ such that $f(t,\uv(t))$ is \sqf{} while $q$ tends to infinity.

For any field $\FF$, let
\begin{equation}
\monic_m(\FF)=\{\uv\in \FF[t]: \deg \uv=m,\uv \mbox{ monic} \} \;,
\end{equation}
so that $\#\monic_m(\fq)=q^m$. Defining
\begin{equation}
\squarefsing{\FF}{f} = \{\uv \in \FF[t] : f(t,\uv(t)) \mbox{ is square-free}
\} \;,
\end{equation}
in \cite{Rudnick} Rudnick studied the frequency

\begin{equation}
\frac {\#(\squarefsing{\fq}{f}\bigcap\monic_m(\fq))}{\#\monic_m(\fq)}
\end{equation}
and showed that, assuming $f \in \fq[t][x]$ is separable with \sqf{} content, as $q\to \infty$,
\begin{equation}\label{equation introduction - count squarefree as q goes to infinity}
\frac{(\#\squarefsing{\fq}{f}\bigcap\monic_m(\fq))}{ \#\monic_m(\fq)} = 1 +O\left (\frac {(m\deg_x
f+\deg_t f)\deg_x f}{q} \right )\;,
\end{equation}
where the implied constant is absolute. In the estimate above $f$ is not assumed to be fixed. Indeed, fixing $f$ makes little sense as the base field of $\fq$ may change as $q \to \infty$. However the estimate depends only on $m$ and a bound on the degree of $f$, so $f$ may vary while $q \to \infty$ as long as its degree remains bounded.

In particular, Eq.~\ref{equation introduction - count squarefree as q goes to infinity} shows that if $q$ is sufficiently large w.r.t. $m$, $\deg_x f$ and $\deg_t f$, then there exists $\uv \in \monic_m(\fq)$ such that $f(t,\uv(t))$ is \sqf{}. Moreover, Eq.~\ref{equation introduction - count squarefree as q goes to infinity} shows that in some sense this is true for almost all $\uv \in \monic_m(\fq)$.

The key tool in \cite{Rudnick} is the use of the discriminant of $f(t,\uv(t))$ in order to tell whether $f(t,\uv(t))$ is \sqf{}. If $f(t,\uv(t))$ is not \sqf{}, the discriminant of $f(t,\uv(t))$ vanishes. The last can be translated into a polynomial condition on the coefficients of $\uv$. Hence the problem can be converted to an algebraic statement about the number of zeros of a polynomial. It may be interesting to note that this tool seems unavailable in the analogue question over $\Z$.

In this note we extend the results of Rudnick by considering a stronger version of the question. Instead of asking whether there exists a polynomial $\uv \in \fq[t]$ such that $f(t,\uv(t))$ is \sqf{} where $\uv$ is a monic polynomial of degree $m$, we will ask whether there exists such polynomial $\uv$ of a specific form, for example $\uv=t^m+\beta$ where $\beta \in \fq$. Throughout this note, when saying that a polynomial $\tilde{\uv}$ is obtained by perturbing one or more coefficients of a polynomial $\uv$, we mean that $\tilde{\uv}$ is obtained by changing only those coefficients of $\uv$ while leaving the other coefficients of $\uv$ unchanged. For example, $t^m+1$ is obtained by perturbing the free coefficient of $t^m$. Let $\kappa \in \N$, such that $1 \leq \kappa \leq m$ and $\kappa \not = 0 \mod p$. Consider an arbitrary polynomial $\uv \in \fq[t]$, $\uv(t)=\sum_{i=1,i \not = \kappa}^{m}\gamma_it^i$, where $\gamma_1,\dots,\gamma_{\kappa-1},\gamma_{\kappa+1},\dots,\gamma_m \in \fq$. We will show that as $q \to \infty$, for almost all $\gamma_0,\gamma_{\kappa} \in \fq$, $f(t,\sum_{i=0}^m\gamma_it^i)$ is \sqf{}. Namely, by perturbing two of the coefficients of $\uv$ we obtain \sqf{} values of $f$. The last is a special case of the main theorem of this note, in which we also consider similar sparse sets, more general than the set which corresponds to perturbations of two of the coefficients of a polynomial $\uv$.

As in \cite{Rudnick} we do a similar use of the discriminant in order to translate the problem to an algebraic theorem which holds for any field. In Section~\ref{section the discriminant and constant assignments over a general field} we describe how the discriminant may be used in showing the existence of \sqf{} values. In this section we discuss first the case of assigning constants from the base field. This is the case where $f \in \polring{\FF[t]}{d}{x}$ is a multivariate polynomial, and we ask whether there exist $\beta_1,\dots,\beta_d \in \FF$ such that $f(t,\beta_1,\dots,\beta_d)$ is \sqf{}. In Section~\ref{section the discriminant and constant assignments over a general field} we also extend the use of the discriminant properties, and in particular the fact that the expression for the discriminant is independent of the base field. By that we prove an algebraic lemma which holds over a general field $\FF$ in the case of constants assignments. The algebraic lemma which we present for constant assignments will also be used later, when we handle non-constant assignments.

The main result we show, provides an estimate of the number of \sqf{} values of $f$ in sparse subsets of $\fq[t]$ which are given by a linear condition of a certain kind. We now describe what these sparse sets are, and introduce the notations we use for defining them.

Let $\FF$ be a field. Let $\av,\bv,\cv \in \FF[t]$. Define

$$
\persing{\FF}{\av}{\bv}{\cv}:=\left \{\av\beta_1+\bv\beta_2+\cv: \beta_1,\beta_2 \in \FF \right \}.
$$

For example, if $\cv(t)=\sum_{i=0}^m \gamma_i t^i$ where $\gamma_1,\dots,\gamma_m \in \FF$, then

$$
\persing{\FF}{1}{0}{\cv} = \left \{\sum_{i=1}^m \gamma_i t^m+\beta_1 : \beta_1 \in \FF \right \}.
$$

In this case $\persing{\FF}{1}{0}{\cv}$ is the set of all polynomials in $\FF[t]$ that one gets by perturbing the free coefficient of $\cv(t)$. Similarly, $\persing{\FF}{1}{t}{\cv}$ denotes the set of polynomials in $\FF[t]$ that one gets by perturbing the coefficient of $t$ and the free coefficient of the polynomial $\cv(t)$.

$$
\persing{\FF}{1}{t}{\cv} = \left \{\sum_{i=2}^m \gamma_i t^m+\beta_2 t+\beta_1: \beta_1, \beta_2 \in \FF \right \}
$$

In general, if $\av,\bv \in \{1,t,t^2,\dots\}$ then $\persing{\FF}{\av}{\bv}{\cv}$ denotes the polynomials obtained by perturbing two coefficients of $\cv$. If $\bv=0$ and $\av \in \{1,t,t^2,\dots\}$ then $\persing{\FF}{\av}{0}{\cv}$ corresponds to perturbing one coefficient of $\cv(t)$.

In the more general case where $\av, \bv$ are not necessarily in $\{1,t,t^2,\dots\}$, $\persing{\FF}{\av}{\bv}{\cv}$ is a subset of $\FF[t]$. In the case of a finite field $\fq$, the size of $\persing{\fq}{\av}{\bv}{\cv}$ satisfies $\#\persing{\fq}{\av}{\bv}{\cv} \leq q^2$.

We are interested in finding condition on $\av$, $\bv$ and $\cv$ that guarantee the existence of \sqf{} values of $f$, when $\uv$ is restricted to the set $\persing{\fq}{\av}{\bv}{\cv}$, provided that $q$ is sufficiently large. Moreover, we will see that for such $\av,\bv,\cv$, as $q \to \infty$ $f$ has a \sqf{} value at almost all the elements of $\persing{\fq}{\av}{\bv}{\cv}$, that is:
$$
   \frac{\#(\squarefsing{\fq}{f} \bigcap \persing{\fq}{\av}{\bv}{\cv})}{\#\persing{\fq}{\av}{\bv}{\cv}} = 1 + O \left (\frac{1}{q} \right )\;,\quad \mbox{as } q\to \infty.
$$

We have
$$
\frac{\#\persing{\fq}{\av}{\bv}{\cv}}{\#\monic_m(\fq)} \leq \frac{q^2}{q^m}.
$$
Assuming $m \geq 3$ and $\deg \av, \deg \bv, \deg \cv \leq m$, then while keeping $m$ fixed
$$
\lim_{q \rightarrow \infty} \frac{\#\persing{\fq}{\av}{\bv}{\cv}}{\#\monic_m(\fq)} =0.
$$

This shows that if $\av,\bv,\cv \in \fq[t]$ are such that $\persing{\fq}{\av}{\bv}{\cv} \subseteq \monic_m(\fq)$, then $\persing{\fq}{\av}{\bv}{\cv}$ is sparse with respect to $\monic_m(\fq)$ in the limit $q \to \infty$, so indeed claiming that there exists $\uv \in \persing{\fq}{\av}{\bv}{\cv}$ such that $f(t,\uv(t))$ is \sqf{} for a given triple $\av,\bv,\cv \in \fq[t]$ is stronger than claiming that there exists such $\uv \in \monic_m(\fq)$. We note that for some triples $\av,\bv,\cv$ $\persing{\FF}{\av}{\bv}{\cv}$ may not be a subset of $\monic_m(\fq)$. We allow such choice of $\av$, $\bv$, $\cv$ as well.

The main result in the case where $f$ is a univariate polynomial over $\fq[t]$ is presented in Section~\ref{section the main results}, where we introduce the main results of this note. This result is proved in Section~\ref{section Proof of Theorem single variable}. In Section~\ref{section simple gen} we state and prove a generalization of this result to the case where $f$ is a multivariate polynomial $f \in \polring{\FF[t]}{d}{x}$.

\section*{Acknowledgments}

This work is part of the author's M.Sc. thesis, written under the supervision of Ze\'ev Rudnick at Tel Aviv University. Partially supported by the Israel Science Foundation (grant No. 1083/10). I would like to thank Prof. Ze\'ev Rudnick for his guidance.

\subsection{Definitions and notations}\label{section Definitions and notations}
\begin{enumerate}
\item $\FF$ denotes a general field. $\fq$ denotes a finite field of $q$ elements. The characteristic of $\FF$ is denoted by $p$ or $\chr(\FF)$. We also use $L,K$ for general fields, in case that more than one field is considered.
\item $\FF[t][x]$ denotes the ring of polynomials in $t$ and $x$ over $\FF$. By analogy with the ring of integers, we consider $f$ as a univariate polynomial in $x$ over the ring $\FF[t]$, hence the notation. Similarly for multivariate polynomials over $\FF[t]$ we use the notation $\polring{\FF[t]}{d}{x}$.
\item Let $D$ be a unique factorization domain. An element $r \in D$ is \textbf{\sqf{}} if every $s \in D$ such that $s^2|r$ is invertible. Two elements $v_1,v_2 \in D$ are called \textbf{associated} if there exists an invertible $\alpha \in D$ such that $v_1=\alpha v_2$. Let $r=\prod_{i=1}^kr_i$ be a factorization of $r$ into irreducible factors. Then $r$ is \sqf{} if and only if for every $i,j$ such that $i \not=j$, $r_i$ and $r_j$ are not associated. For our purposes $D$ will be a polynomial ring. In cases where $r$ can be considered as an element in two unique factorization domains $D,\tilde{D}$ where $\tilde{D} \supset D$, we specify in which ring we assume $r$ is \sqf{} by saying that $r$ is \sqf{} in $R$ or $r$ \sqf{} in $\tilde{D}$. The same meaning holds when saying that $r$ is irreducible in $D$, or $r$ is irreducible in $\tilde{D}$, and also when saying that $d \in D$ divides $r \in D$ in $D$ or $d$ divides $r$ in $\tilde{D}$.
\item Let $\FF$ be a field - we denote by $\overline{\FF}$ an algebraic closure of $\FF$, also $\overline{\FF(x)}$ denotes an algebraic closure of $\FF(x)$ etc. We also assume that $\overline{\FF(x)}$ is chosen such that it contains $\overline{\FF}$.
\item For a vector $(a_1,a_2,\dots,a_n) \in \FF[t]^n$, define $$\|(a_1,\dots,a_n)\|:=\max\{\deg a_1,\dots,\deg a_n\}.$$

\item A polynomial $f \in \FF[x]$ is \textbf{separable} if all its roots in an algebraic closure of $\FF$ are distinct. If $f \in \polring{\FF}{d}{x}$ is a multivariate polynomial, and $i \in \N, 1 \leq i \leq d$, then $f$ is separable in $x_i$ if $f$ is separable when considering $f$ as a univariate polynomial in the variable $x_i$ over the field $\FF(x_1,\dots,x_{i-1},x_{i+1},\dots,x_d)$.
\item Let $D$ be an integral domain. A polynomial $f \in D[x]$ is \textbf{primitive} if the only elements in $D$ that divide all the coefficients of $f$ are the invertible elements in $D$.
\item A field $\FF$ is \textbf{perfect} if either it has characteristic $0$, or when $p>0$, for any $c \in \FF$, $c^{\frac{1}{p}} \in \FF$ holds.

\item Let $R_1,R_2$ be rings. Let $R$ be a subring of $R_1$ and $R_2$. A \textbf{$R$- homomorphism} is a homomorphism $R_1 \rightarrow R_2$ such that $r \mapsto r$ for every $r \in R$.
\item For a polynomial $f \in \polring{\FF[t]}{d}{x}$, $\deg_t f$ denotes the degree of $f$ in the variable $t$, similarly, for $i \in \N, 1 \leq i \leq d$, $\deg_{x_i} f$ denotes the degree of $f$ in $x_i$. $\deg f$ denotes the total degree of $f$ in all variables $t,x_1,\dots,x_d$. $\deg_{\vec{x}} f$ denotes the total degree of $f$ when considered as a polynomial in the variables $x_1,\dots,x_d$ over $\FF[t]$.

\item Let $f \in \FF[x]$. $\Delta f$ denotes the discriminant of $f$. Let $\gamma_k \in \FF$ be the leading coefficient of $f$. Then $\Delta f = \gamma_k^{2k-2}\prod_{i < j}(r_i-r_j)^2$ where $r_1,\dots,r_k$ are the roots of $f$ in $\overline{\FF}$. $D^k$ denotes the expression for the discriminant in terms of the coefficients of $f$. For example, if $f=\gamma_2x^2+\gamma_1x+\gamma_0$ then $D^k (f)$ = $\gamma_1^2-4\gamma_2\gamma_0$. If $f \in \polring{\FF[t]}{d}{x}$ is a multivariate polynomial, then $\Delta_t,\Delta_{x_i}$ and $D^k_{t}, D^k_{x_i}$ denote the corresponding notations when considering $f$ as a univariate polynomial in $t$ or $x_i$ respectively.
\item Let $\av,\bv,\cv \in \FF[t]$. Let
$$
\persing{\FF}{\av}{\bv}{\cv}:=\{\av\beta_1+\bv\beta_2+\cv: \beta_1,\beta_2 \in \FF\}.
$$

\item Given a polynomial $f \in \FF[t][x]$, let
$$
\squarefsing{\FF}{f} := \{\uv \in \FF[t] : f(t,\uv(t)) \mbox{ is square-free}
\}.
$$
For a multivariate polynomial $f \in \polring{\FF[t]}{d}{x}$ the corresponding notation is
$$
\squaref{\FF}{d}{f} := \{\vec{\uv{}} \in \FF[t]^d : f(t,\uv_1(t),\dots,\uv_d(t)) \mbox{ is square-free}\}.
$$
\item We denote the set of monic polynomials of degree $m$ by $\monic_m(\FF)$, namely $\monic_m(\FF)=\{\uv\in \FF[t]: \deg \uv=m,\uv \mbox{ monic} \}$. In the case where $\FF=\fq$ we abbreviate and write $\monic_{m}$.

\end{enumerate}

\section{The main results}\label{section the main results}
\subsection{\Sqf{} values of a univariate polynomial}

We start by stating the main theorem for univariate polynomials in its general form, and then showing a few specific examples which are special cases of the general theorem. Recall that for polynomials $\av,\bv,\cv \in \FF[t]$, we define
$$\|(\av,\bv,\cv)\| = \max \{\deg \av,\deg \bv, \deg \cv\}.$$

\begin{thm}\label{thm perturbations of a single variable polynomial finit
field}
Let $f \in \fq[t][x]$ be a \sqf{} polynomial. Let $\av,\bv,\cv \in \fq[t]$ such that $\gcd(\av,\bv)=1$. Let $N \in \N$. Assume $\deg_x f,\deg_t f, \|(\av,\bv,\cv)\| \leq N.$
Assume that at least one of the following holds

\begin{enumerate}
\item \label{part single variable theorem large char}$p > C(N)$ where $C(N)$ is a constant which depends only on $N$.
\item\label{part single variable theorem arbitrary char}$\frac{\bv}{\av} \not \in \FF(t^p)$ where $\av \not = 0$.

Then while $N$ remains fixed, we have:
\begin{equation}\label{equation main theorem single variable estimate}
   \frac{\#(\squarefsing{\fq}{f} \bigcap \persing{\fq}{\av}{\bv}{\cv})}{\#\persing{\fq}{\av}{\bv}{\cv}} = 1 + O\left (\frac{1}{q}\right)\;,\quad \mbox{as } q\to \infty.
\end{equation}
In particular, if $q$ is sufficiently large with respect to $N$ then there exist $\beta_1,\beta_2 \in \fq$ such that $f(t,\cv(t)+\av(t)\beta_1+\bv(t)\beta_2)$ is \sqf{}.
\end{enumerate}
\end{thm}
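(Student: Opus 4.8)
The plan is to follow the strategy of Rudnick \cite{Rudnick}: translate square-freeness of $f(t,\uv(t))$ into the non-vanishing of a discriminant, and then count zeros of the resulting polynomial in the two parameters $\beta_1,\beta_2$. Concretely, write $\uv = \cv + \av\beta_1 + \bv\beta_2$ and consider $F(\beta_1,\beta_2) := f(t, \cv + \av\beta_1 + \bv\beta_2) \in \fq[\beta_1,\beta_2][t]$, a polynomial in $t$ whose coefficients are polynomials in $\beta_1,\beta_2$. Its discriminant in $t$, obtained by substituting those coefficients into the universal discriminant expression $D^k_t$, is a polynomial $\Delta(\beta_1,\beta_2) \in \fq[\beta_1,\beta_2]$. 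Since $D^k_t$ has degree and coefficients bounded in terms of $N$ only (independent of the base field), $\Delta$ has degree bounded by a function of $N$. If $f(t,\uv(t))$ fails to be square-free at a specialization $(\beta_1,\beta_2)$, then either $\deg_t f(t,\uv(t))$ drops (a codimension-one condition, lying on a bounded-degree hypersurface in $(\beta_1,\beta_2)$ unless it is identically forced, which a separate argument rules out) or $\Delta(\beta_1,\beta_2) = 0$. So it suffices to show $\Delta$ is not the zero polynomial; then by Schwartz–Zippel / the Lang–Weil-type bound for the number of $\fq$-points on a bounded-degree hypersurface in $\af^2$, the number of bad $(\beta_1,\beta_2)$ is $O_N(q)$, which divided by $\#\persing{\fq}{\av}{\bv}{\cv} = q^2$ (using $\gcd(\av,\bv)=1$ to ensure the parametrization is injective, hence the set really has size $q^2$) gives the $1 + O(1/q)$ estimate, and in particular a square-free value exists once $q$ is large enough.

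The crux is therefore the algebraic statement that $\Delta \not\equiv 0$, and this is exactly where the two hypotheses enter. One proves it by base-changing to $\overline{\fq}$, or better, by working over $\FF(\beta_1,\beta_2)$ (or $\FF((\beta_1,\beta_2))$): $\Delta \equiv 0$ would mean $f(t, \cv + \av\beta_1 + \bv\beta_2)$ is \emph{not} square-free in $\overline{\FF(\beta_1,\beta_2)}[t]$, i.e. $f(t,X)$ and $\partial_t f(t,X)$ have a common factor after the substitution $X = \cv(t) + \av(t)\beta_1 + \bv(t)\beta_2$. Since $f$ is square-free in $\fq[t][x]$, $f(t,X)$ and $\partial_t f(t,X)$ (or $f$ and $\partial_X f$) are coprime in $\fq[t,X]$; the substitution sends the two-variable picture to a curve in the $(t,\beta_1,\beta_2)$-space, and one must show this curve is not entirely contained in the (strict, by separability-type considerations) locus where the gcd is nontrivial. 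This should follow from an algebraic lemma of the type the author announces will be proved in Section~\ref{section the discriminant and constant assignments over a general field} for constant assignments, applied here to the substitution $x \mapsto \cv + \av\beta_1 + \bv\beta_2$ viewed as "assigning" the two constants $\beta_1,\beta_2$; the key point is that this substitution is, generically in $\beta_1,\beta_2$, a dominant/surjective-enough map on the relevant algebraic sets that square-freeness is preserved.

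The main obstacle is precisely handling the positive-characteristic pathology, which is why hypothesis \eqref{part single variable theorem arbitrary char} imposes $\bv/\av \notin \FF(t^p)$. The danger is an inseparability phenomenon: if $\cv + \av\beta_1 + \bv\beta_2$, as a function of $t$, were a $p$-th power up to lower-order adjustments — equivalently if the $t$-derivative of the substituted polynomial degenerates because $\frac{d}{dt}(\av\beta_1 + \bv\beta_2) \in \fq(\beta_1,\beta_2)[t]^p$ for all $\beta_1,\beta_2$ — then $\Delta$ could vanish identically even though $f$ is square-free. The condition $\bv/\av \notin \FF(t^p)$ is exactly what prevents $\av\beta_1 + \bv\beta_2$ from lying in $\fq(\beta_1,\beta_2)(t^p)$, hence forces the substituted polynomial to be separable in $t$ for generic $(\beta_1,\beta_2)$, hence forces $\Delta \not\equiv 0$; in large characteristic $p > C(N)$ the pathology cannot occur for degree reasons, giving alternative \eqref{part single variable theorem large char}. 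I would isolate this as the one lemma requiring real care, with the counting and the reduction to $\Delta \not\equiv 0$ being the routine (if slightly tedious) surrounding bookkeeping, including the verification that $\deg_t f(t,\uv(t))$ does not drop for all specializations — handled by tracking the leading $t$-coefficient of $f(t, \cv + \av\beta_1 + \bv\beta_2)$ and showing it, too, is a nonzero polynomial in $(\beta_1,\beta_2)$ of bounded degree.
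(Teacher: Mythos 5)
Your plan follows the same route as the paper: substitute $x \mapsto \av x_1 + \bv x_2 + \cv$, take the discriminant in $t$ of the result (times its leading $t$-coefficient, to absorb the degree-drop locus) to obtain a bounded-degree polynomial $P(\beta_1,\beta_2)$ whose zero set contains all bad specializations, prove $P\not\equiv 0$ by showing $f(t,\av x_1+\bv x_2+\cv)$ is separable in $t$, and finish with the elementary bound $nq^{d-1}$ on zeros of a bivariate polynomial. The degree bookkeeping and the split into the two hypotheses (degree reasons when $p>C(N)$; a chain-rule/derivation argument exploiting $\bv/\av\notin\FF(t^p)$ otherwise) are exactly the paper's.

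There is, however, a genuine gap. You invoke $\gcd(\av,\bv)=1$ only to claim that the parametrization $(\beta_1,\beta_2)\mapsto \av\beta_1+\bv\beta_2+\cv$ is injective, so that $\#\persing{\fq}{\av}{\bv}{\cv}=q^2$. That claim is false in general (take $\bv=0$, $\av=1$, which is allowed under hypothesis (\ref{part single variable theorem large char}): the set has size $q$), and it is also not where the hypothesis is needed --- the ratio estimate survives regardless, because all fibers of the affine map have equal size, so proportions over the image equal proportions over $\fq^2$. The place where $\gcd(\av,\bv)=1$ genuinely enters is inside the central lemma you defer: it is what guarantees that $f(t,\av x_1+\bv x_2+\cv)$ remains square-free in $\FF[t][x_1,x_2]$, which is a prerequisite for its $t$-separability and hence for $P\not\equiv 0$. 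This is not automatic from the substitution being ``generically dominant'': one must show that the substitution carries irreducibles of $\FF[t][x]$ to irreducibles of $\FF[t][x_1,x_2]$, and the proof (Gauss's lemma together with checking that the image stays primitive modulo every prime $h$ of $\FF[t]$) uses precisely that $\av$ and $\bv$ are not both divisible by $h$. The paper's Example~\ref{example single gcd is needed} ($\av=t$, $\bv=t^2$, $f=x(x+t)$) shows that without coprimality the substituted polynomial acquires the square factor $t^2$, so the discriminant vanishes identically and the counting collapses. Your sketch of the ``$\Delta\not\equiv 0$'' lemma addresses only the inseparability pathology in characteristic $p$ and omits this square-freeness-preservation step, so as written the plan cannot be completed.
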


If $q$ is taken to be large then $p=\chr (\fq)$ may still remain small. For example if we fix a prime number $p$, then $\fq$ may be some algebraic extension of $\mathbb{F}_p$ of large degree. On the other hand, $p$ and $q$ may both be large, for example if we take $q=p$ and consider $\mathbb{F}_p$ where $p \to \infty$. (\ref{part thm sigle variable - albgebraic - single variable perturbation}) in Theorem~\ref{thm perturbations of a single variable polynomial finit field} can be viewed as the case where $\chr(\fq)$ is large. Considering $\mathbb{F}_p$ where $p \to \infty$ is an example of this case. (\ref{part thm sigle variable - albgebraic - two variables perturbation}) provides the conditions on $\av,\bv$ in the case where $\fq$ is a field with an arbitrary positive characteristic. We introduce two examples of Theorem~\ref{thm perturbations of a single variable polynomial finit field}, one for each of the two cases.

\begin{example}\label{example single variable - one variable perturbation}
Let $f \in \fq[t][x]$ be \sqf{}. Let $\cv \in \fq[t]$, given by $\cv(t)=\sum_{i=0}^m\gamma_i t^i$. If we take $\bv=0,\av=1$ then $\gcd(\bv,\av)=1$. Hence by Theorem~\ref{thm perturbations of a single variable polynomial finit field} if $q$ and $p$ are sufficiently large with respect to $N$, then there exists $\beta_1 \in \fq$ such that $f(t,\cv(t)+\beta_1)$ is \sqf{}, where $\cv(t)+\beta_1$ is a polynomial obtained by a perturbation of the free coefficient of $\cv$.
\end{example}

\begin{example}\label{example single variable - two variable perturbation}
Let $f \in \fq[t][x]$ be \sqf{}. Let $\kappa \in \mathbb{N}$ such that $\kappa \not = 0 \mod p$. Let $\cv \in \fq[t]$, given by $\cv(t)=\sum_{i=0}^m\gamma_i t^i$. If we take $\bv=t^\kappa,\av=1$ then $\gcd(\bv,\av)=1$. Also $\frac{\bv}{\av}=t^{\kappa} \not \in \fq(t^p)$. This shows that (\ref{part single variable theorem arbitrary char}) in Theorem~\ref{thm perturbations of a single variable polynomial finit field} holds. Hence by the same theorem if $q$ is sufficiently large with respect to $N$, then there exist $\beta_1,\beta_2$ such that $f(t,\cv(t)+\beta_2t^\kappa+\beta_1)$ is \sqf{}, where $\cv(t)+\beta_2t^{\kappa}+\beta_1$ is a polynomial obtained by a perturbation the free coefficient and the coefficient of $t^{\kappa}$ of $\cv$. In particular, in the case where $\kappa=1$, a \sqf{} value of $f$ is obtained by perturbing the first two coefficients of $\cv$.
\end{example}

As the first example above shows, the large characteristic case allows us to take one of $\av$ or $\bv$ to be $0$, while the other be $1$. This is because $\gcd(1,0)=1$, hence (\ref{part single variable theorem large char}) of Theorem~\ref{thm perturbations of a single variable polynomial finit
field} holds for this choice of $\av,\bv$. However, for an arbitrary positive characteristic both $\av$ and $\bv$ are non-zero as this is required in (\ref{part thm sigle variable - albgebraic - two variables     perturbation}) of Theorem~\ref{thm perturbations of a single variable polynomial finit field}. Hence in the case of a large characteristic it is sufficient to perturb a single coefficient of $\cv$ in order to obtain a \sqf{} value of $f$, while in the case of an arbitrary positive characteristic it might be necessary to perturb two coefficients of $\cv$.

The following two examples show why the assumption that $\gcd(\av,\bv)=1$ is required in Theorem~\ref{thm perturbations of a single variable polynomial finit field}, and why the assumption that $\frac{\bv}{\av} \not \in \fq(t^p)$ is required in (\ref{part thm sigle variable - albgebraic - two variables     perturbation}) of Theorem~\ref{thm perturbations of a single variable polynomial finit field}.

\begin{example}\label{example single gcd is needed}
Let $\av=t,\bv=t^2,\cv=0$. In this case $\gcd(\av,\bv)=t$. Let $f=x(x+t)$. Then $f$ is \sqf{} but
$$
f(t,t\beta_1+t^2\beta_2)=(t\beta_1+t^2\beta_2)(t\beta_1+t^2\beta_2+t)=t^2(\beta_1+t\beta_2)(\beta_1+t\beta_2+1)
$$
which is divisible by $t^2$. Hence $f(t,t\beta_1+t^2\beta_2)$ is not \sqf{} for any choice of $\beta_1,\beta_2 \in \fq$.
\end{example}

\begin{example}
Let $\av=1,\bv=t^p,\cv=t$. Let $f=x-t$, which is irreducible and in particular \sqf{} but
$$
f(t,\beta_1+t^p\beta_2+t)=\beta_1+t^p\beta_2=\left (\beta_1^{\frac{1}{p}}+t\beta_2^{\frac{1}{p}}\right)^p.
$$
Since $\fq$ is perfect $\beta_1^{\frac{1}{p}},\beta_2^{\frac{1}{p}} \in \fq$. Hence $f(t,\beta_1+t^p\beta_2+t)$ is not \sqf{} for any $\beta_1,\beta_2 \in \fq$.
\end{example}

\begin{remark}\label{remark meaning of necessary conditions}
Let $\av,\bv,\cv \in \fq[t]$ such that $\gcd(\av,\bv) \not = 1$. By claiming that the condition $\gcd(\av,\bv)=1$ is required in Theorem~\ref{thm perturbations of a single variable polynomial finit field} we do not mean that Eq.~\ref{equation main theorem single variable estimate} in Theorem~\ref{thm perturbations of a single variable polynomial finit field} cannot hold for a specific choice of a \sqf{} $f$.

Instead, by claiming that the condition is required we mean that if $$\gcd(\av,\bv) \not =1$$
then there exists a \sqf{} polynomial $f$ such that $f(t,\beta_1\av(t)+\beta_2\bv(t)+\cv(t))$ is not \sqf{} for any $\beta_1,\beta_2 \in \fq$.

The same meaning applies also when we claim that the condition $\frac{\bv}{\av} \not \in \fq(t^p)$ is required in (\ref{part thm sigle variable - albgebraic - two variables     perturbation}) of Theorem~\ref{thm perturbations of a single variable polynomial finit field}.

We do not place any restrictions on $f$ in Theorem~\ref{thm perturbations of a single variable polynomial finit field} other than that it should be \sqf{} and that its degree remains bounded while $q \to \infty$. The conditions insure the existence of \sqf{} values for any such $f$.
\end{remark}

\subsection{Square-free values of a multivariate polynomial}\label{subsection sqf values of a multivariate polynomial}
In Section~\ref{section simple gen} we state and prove Theorem~\ref{thm simple gen perturbations of a polynomial finit field} which is a generalization of Theorem~\ref{thm perturbations of a single variable polynomial finit field} that holds for multivariate polynomials.

We use Theorem~\ref{thm simple gen perturbations of a polynomial finit field} in order to estimate the number of \sqf{} values of a multivariate polynomial $f$ at the set $\monic_{m_1}\times \dots \times \monic_{m_d}$ where $m_1,\dots,m_d \in \N$, $\deg_t f,\deg_{\vec{x}} f$ are fixed and $q \to \infty$. This result is stated in Corollary~\ref{cor sqf values in of monic polyomials}. Corollary~\ref{cor sqf values in of monic polyomials} generalizes the result in \cite{Rudnick} to the case of multivariate polynomials. In Section~\ref{section simple gen} we will show that it follows from Theorem~\ref{thm simple gen perturbations of a polynomial finit field}.

\begin{cor}[\sqf{} values of multivariate polynomials over a finite filed]\label{cor sqf values in of monic polyomials}
Let $f \in \polring{\fq[t]}{d}{x}$ be a \sqf{} polynomial. Let $m_1,\dots,m_d \in \N$. Let $N \in \N$. Assume $\deg_{\vec{x}} f,\deg_t f, m_1,\dots,m_d \leq N$ and $2 \leq m_1,\dots,m_d$. Then while $N$ remains fixed, the following holds:

\begin{equation}\label{equation multivariate monic theorem single variable estimate}
   \frac{\#(\squaref{\fq}{d}{f} \bigcap (\monic_{m_1} \times \dots\times \monic_{m_d}))}{\#\monic_{m_1} \times \dots\times \monic_{m_d}}= 1 + O \left (\frac{1}{q} \right )\;,\quad \mbox{as } q\to \infty.
\end{equation}
In particular, if $q$ is sufficiently large with respect to $N$ there exist $\uv_1 \in \monic_{m_1},\dots,\uv_d \in \monic_{m_d}$ such that $f(t,\uv_1(t),\dots,\uv_d(t))$ is \sqf{}.
\end{cor}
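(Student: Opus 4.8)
The plan is to deduce Corollary~\ref{cor sqf values in of monic polyomials} from the multivariate perturbation result, Theorem~\ref{thm simple gen perturbations of a polynomial finit field}, by covering the box $\monic_{m_1}\times\dots\times\monic_{m_d}$ by a family of product perturbation sets to which that theorem applies, and then summing over the family the estimates it provides. For each $i\in\{1,\dots,d\}$ I would take $\av_i=1$ and $\bv_i=t$; this is admissible precisely because $m_i\ge 2$, so that $\deg\bv_i=1\le m_i$ and there is room for two perturbed coefficients. Given a tuple $\vec\delta=\bigl(\delta^{(i)}_j\bigr)_{1\le i\le d,\;2\le j\le m_i-1}$ of elements of $\fq$, set $\cv_i=\cv_i(\vec\delta):=t^{m_i}+\sum_{j=2}^{m_i-1}\delta^{(i)}_j t^j$. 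Then
$$
\persing{\fq}{1}{t}{\cv_i}=\bigl\{\cv_i+\beta^{(i)}_1+\beta^{(i)}_2 t:\beta^{(i)}_1,\beta^{(i)}_2\in\fq\bigr\}
$$
is exactly the set of monic degree-$m_i$ polynomials whose coefficients at $t^2,\dots,t^{m_i}$ agree with those of $\cv_i$, and it has $q^2$ elements since $1$ and $t$ are linearly independent over $\fq$. Hence, as $\vec\delta$ ranges over $\fq^{\sum_{i=1}^d(m_i-2)}$, the products $C(\vec\delta):=\persing{\fq}{1}{t}{\cv_1}\times\dots\times\persing{\fq}{1}{t}{\cv_d}$ partition $\monic_{m_1}\times\dots\times\monic_{m_d}$ into $q^{\sum_{i=1}^d(m_i-2)}$ blocks, each of size $q^{2d}$.

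Next I would check that Theorem~\ref{thm simple gen perturbations of a polynomial finit field} applies to every block $C(\vec\delta)$ with one common implied constant. Indeed $f$ is square-free by hypothesis; for each $i$ we have $\gcd(\av_i,\bv_i)=\gcd(1,t)=1$ and $\bv_i/\av_i=t\notin\fq(t^p)$ (the field $\fq(t^p)$ is a proper subfield of $\fq(t)$ not containing $t$), so the characteristic-free alternative in that theorem holds for every $\vec\delta$; and $\deg_{\vec x}f,\deg_t f\le N$ together with $\|(\av_i,\bv_i,\cv_i)\|\le m_i\le N$ bound all the relevant degrees by $N$. Hence, uniformly in $\vec\delta$,
$$
\#\bigl(\squaref{\fq}{d}{f}\cap C(\vec\delta)\bigr)=\#C(\vec\delta)\cdot\Bigl(1+O\bigl(\tfrac1q\bigr)\Bigr)=q^{2d}+O\bigl(q^{2d-1}\bigr),
$$
with the implied constant depending only on $N$ and $d$.

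Summing this over the $q^{\sum_{i=1}^d(m_i-2)}$ blocks gives
$$
\#\bigl(\squaref{\fq}{d}{f}\cap(\monic_{m_1}\times\dots\times\monic_{m_d})\bigr)=q^{\sum_{i=1}^d m_i}+O\bigl(q^{\sum_{i=1}^d m_i-1}\bigr),
$$
and dividing by $\#(\monic_{m_1}\times\dots\times\monic_{m_d})=q^{\sum_{i=1}^d m_i}$ yields Eq.~\ref{equation multivariate monic theorem single variable estimate}. Once $q$ is large enough in terms of $N$ and $d$ the right-hand side of that estimate is positive, so $\squaref{\fq}{d}{f}$ meets at least one block; any tuple $(\uv_1,\dots,\uv_d)$ in that block lies in $\monic_{m_1}\times\dots\times\monic_{m_d}$ and satisfies that $f(t,\uv_1(t),\dots,\uv_d(t))$ is square-free, which is the final assertion.

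The main obstacle is not in the corollary itself — all of the analytic substance sits in Theorem~\ref{thm simple gen perturbations of a polynomial finit field} — but in two points of care. First, the choice of perturbation directions: taking $\bv_i=t$ rather than $\bv_i=0$ (which would confine us to the large-characteristic case) is what allows an arbitrary characteristic, and this is possible exactly because $m_i\ge 2$, which is why the corollary imposes that hypothesis. Second, the implied constant in Theorem~\ref{thm simple gen perturbations of a polynomial finit field} must depend only on the degree bound $N$ and on $d$, so that it survives the summation over the $q^{\sum_{i=1}^d(m_i-2)}$ blocks; this is built into the phrasing of that theorem.
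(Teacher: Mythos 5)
Your proposal is correct and follows essentially the same route as the paper: the paper likewise partitions $\monic_{m_1}\times\dots\times\monic_{m_d}$ into blocks indexed by the coefficients of $t^2,\dots,t^{m_i-1}$, applies Theorem~\ref{thm simple gen perturbations of a polynomial finit field} to each block via the arbitrary-characteristic alternative, and sums; the only cosmetic difference is that the paper takes $\av_i=t$, $\bv_i=1$ where you take $\av_i=1$, $\bv_i=t$, which is immaterial. Your uniformity and counting checks match the paper's.
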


An estimate in the case where $q$ is fixed and the degrees of $\uv_1,\dots,\uv_d$ are allowed to grow was proved by Poonen in \cite{Poonen Duke}. Let $f \in \polring{\fq[t]}{d}{x}$ be a polynomial which is \sqf{} as an element of $\polring{K}{d}{x}$, where $K$ denotes the field of fractions of $\fq[t]$. Let $B_1,\dots,B_d \in \N$ and define
$$\bbox=\bbox(B_1,\dots,B_d):=\{(\uv_1,\dots,\uv_d) \in \fq[t]^d: \deg \uv_i \leq B_i \mbox{ for all }i\}.$$
For a prime $\mathfrak{p}$ in $\fq[t]$ let $c_{\mathfrak{p}}$ denote then number of $x \in (\fq[t]/{\mathfrak{p}^2})^d$ satisfying $f(x)=0$ in $\fq[t]/\mathfrak{p}^2$. Poonen showed that
$$
\lim_{B_1,\dots,B_d \to \infty}\frac{\#(\bbox \bigcap \squaref{\fq}{d}{f})}{\# \bbox}=\prod_{\mathfrak{p} \mbox{ prime}}\left (1-\frac{c_{\mathfrak{p}}}{|\mathfrak{p}|^{2d}}\right ).
$$

Theorem~\ref{thm simple gen perturbations of a polynomial finit field}, which generalizes Theorem~\ref{thm perturbations of a single variable polynomial finit field} to multivariate polynomials, will be stated in Section~\ref{section simple gen}. Here we only introduce an example which is a specific case of Theorem~\ref{thm simple gen perturbations of a polynomial finit field}.

\begin{example}\label{example simple perturbation generalization}
Let $f \in \polring{\fq[t]}{d}{x}$ be a \sqf{} polynomial. Let $d > 0$ and let $\cv_1,\cv_2,\dots,\cv_d \in \fq[t]$. In this example we perturb two coefficients of each of the polynomials $\cv_1,\cv_2,\dots,\cv_d$ in order to obtain a \sqf{} value of $f$. In the case where $d=1$ this example is the same as Example~\ref{example single variable - two variable perturbation}. Let $\kappa_1,\kappa_2,\dots,\kappa_{d} \in \N$ such that $\kappa_i \not = 0 \mod p$ for any $\btwn{i}{1}{d}$. Then if $q$ is sufficiently large with respect to $\deg_{\vec{x}} f,\deg_t f$ and $\deg \cv_i,\kappa_i$ for $1 \leq i \leq d$, then there exist $\beta_1,\dots,\beta_{2d} \in \fq$ such that
\begin{equation}\label{equation simple perturbation generalization}
f(t,\beta_1+t^{\kappa_1}\beta_{d+1}+\cv_1,\beta_2+t^{\kappa_2}\beta_{d+2}+\cv_2,\dots,\beta_d+t^{\kappa_d}\beta_{2d}+\cv_d)
\end{equation}
is \sqf{}. In particular, if $\kappa_i = 1,\;\forall\btwn{i}{1}{d}$, a \sqf{} value of $f$ is obtained by perturbing the first two coefficients of $\cv_1,\dots,\cv_d$.
\end{example}

\section{The discriminant and constant assignments over a general field}\label{section the discriminant and constant assignments over a general field}
In this section we work over a general field $\FF$ which is not necessarily finite. Let $f \in \polring{\FF[t]}{d}{x}$. We first consider a special case of the main question we are concerned with, that of substituting $x_1,\dots,x_d$ with constants. By that we mean, we consider $f(t,\beta_1,\dots,\beta_d) \in \FF[t]$ where $\beta_1,\dots,\beta_d \in \FF$.

In the special case of constant assignments, the main question we are concerned with, is to infer from the assumption that $f$ is \sqf{}, that an assignment $f(t,\beta_1,\dots,\beta_d)$ is \sqf{}. Instead of drawing such a connection between $f$ being \sqf{} to $f(t,\beta_1,\dots,\beta_d)$ being \sqf{}, Lemma~\ref{lem main - squarefree values of polynomial separable in t}, which is the main lemma of this section, shows a connection between $f$ being separable in $t$ to $f(t,\beta_1,\dots,\beta_d)$ being separable in $t$. This connection is given by the existence of a polynomial $P \in \FF[x_1,\dots,x_d]$ which satisfies the following property: if $f$ is separable in $t$ then $P$ is not the zero polynomial, while if $f(t,\beta_1,\dots,\beta_d)$ is not separable in $t$ then $P(\beta_1,\dots,\beta_d)$ is zero. Since any non \sqf{} polynomial in $\FF[t]$ is in particular not separable, $P(\beta_1,\dots,\beta_d)=0$ for any $\beta_1,\dots,\beta_d$ such that $f(t,\beta_1,\dots,\beta_d)$ is not \sqf{}. Later this fact will be used in the proof of the main result.

In this section $|$ denotes assignment. Hence $$f|_{x_1=\beta_1,\dots,x_d=\beta_d}(t)=f(t,\beta_1,\dots,\beta_d) \in \FF[t].$$
Also, $\Delta$ denotes the discriminant of a polynomial as defined in Section~\ref{section Definitions and notations}.

The main observation, which is also the motivation for using the discriminant, is that $f|_{x_1=\beta_1,\dots,x_d=\beta_d}$ has a multiple root in $\overline{\FF}$ if and only if $\Delta (f|_{x_1=\beta_1,\dots,x_d=\beta_d})=0$. Hence, the constant assignments $\beta_1,\dots,\beta_d$ which result in a non separable polynomial $f(t,\beta_1,\dots,\beta_d) \in \FF[t]$, can be identified as those that make the discriminant vanish. The last fact can be used in order to define the polynomial $P$, as we now show.

Let $D^k \in \Z[x_0,\dots,x_k]$ be the polynomial which expresses the discriminant of a polynomial of degree $k$ in terms of its coefficients. Let $f \in \FF[x]$ be a polynomial such that $\deg f \leq k$, $f=\sum_{i=0}^{k} \delta_ix^i$. If $\delta_k \not = 0$, then the discriminant of $f$ in terms of its coefficients is given by $D^k$
$$
\Delta f = D^k(\delta_0,\dots,\delta_k).
$$
We use the notation $D^k f:=D^k(\delta_0,\dots,\delta_k)$. For example if $f(x)=\delta_2x^2+\delta_1x+\delta_0$, then $D^k f=\delta_1^2-4\delta_2 \delta_0$. We emphasize the distinction between $\Delta f$ and $D^k f$. If $\deg f=k$ then indeed $\Delta f = D^k f$. But if $\deg f < k$ then this is not necessarily true.

If $f \in \polring{\FF}{d}{x}$ is a multivariate polynomial then the notation $D^k_{x_i}$ will mean $D^{k} f$ where $f$ is viewed as a polynomial in variable $x_i$ over
$$\FF[x_1,\dots,x_{i-1},x_{i+1},\dots,x_d].$$
Namely, if
$$
f(x_1,\dots,x_d)=\sum_{j=0}^k \delta_j(x_1,\dots,x_{i-1},x_{i+1},\dots,x_d)x_i^j
$$
then
$$
D^k_{x_i} (f) = D^k(\delta_0(x_1,\dots,x_{i-1},x_{i+1},\dots,x_d),\dots,\delta_k(x_1,\dots,x_{i-1},x_{i+1},\dots,x_d)).$$
In this case $D^k_{x_i} (f) \in \FF[x_1,\dots,x_{i-1},x_{i+1},\dots,x_d]$.

The following fact, which we state as a lemma, is a direct consequence of the fact that $D^k$ is a polynomial in $\Z[x_0,\dots,x_k]$ which depends only on $k$ and is the same regardless of the base field.

\begin{lem}\label{lem discriminant calculation}
Let $f \in \FF[t][x_1,\dots,x_d]$ where $\deg_t f \leq k$. Let $\beta_1,\dots,\beta_d \in \FF$. Then
\begin{equation}\label{equation switch order of discriminant and assignment}
(D^k_t f) |_{x_1=\beta_1,\dots,x_d=\beta_d}=D^k (f|_{x_1=\beta_1,\dots,x_d=\beta_d}).
\end{equation}
\end{lem}

The right hand side of Eq.~\ref{equation switch order of discriminant and assignment} means first assigning $x_1=\beta_1,\dots,x_d=\beta_d$ to the polynomial $f$. The result is a polynomial in $\FF[t]$. Then $D^k$ is applied to the result. The left hand side of ~\ref{equation switch order of discriminant and assignment} means first applying $D_t^k f$. The result is a polynomial in $\FF[x_1,\dots,x_d]$. Then assigning $x_1=\beta_1,\dots,x_d=\beta_d$ to the result. The order of operations is opposite in the two expressions. The lemma asserts that the two are equal. Let $\mathcal{F}_k :=\{f \in \polring{\FF[t]}{d}{x}: \deg_t f \leq k\}$. Then the statement of the lemma is summarized by the commutative diagram below which holds for any polynomial $f \in \mathcal{F}_k$.

$$
\xymatrix{
\polring{\FF}{d}{x} \ar[d]^{|_{x_1=\beta_1,\dots,x_d=\beta_d}}&
\mathcal{F}_k \ar[l]^{D^k_t} \ar[d]^{|_{x_1=\beta_1,\dots,x_d=\beta_d}}\\
\FF &\FF[t] \cap \mathcal{F}_k\ar[l]^{D^k}}
$$

\begin{proof}

$f(t,x_1,\dots,x_d)=\sum_{i=0}^{k}\delta_i(x_1,\dots,x_d)t^i$. Hence

\begin{equation}\label{equation descriminant in x}
D^k_t f = D^k (\delta_0(x_1,\dots,x_d),\delta_1(x_1,\dots,x_d),\dots,\delta_k(x_1,\dots,x_d)).
\end{equation}

Hence
\begin{equation}\label{equation opposite operation first discriminant}
(D^k_t f)|_{x_1=\beta_1,\dots,x_d=\beta_d}=D^k(\delta_0(\beta_1,\dots,\beta_d),\delta_1(\beta_1,\dots,\beta_d),\dots,\delta_k(\beta_1,\dots,\beta_d)).
\end{equation}

Now, $f|_{x_1=\beta_1,\dots,x_d=\beta_d}=f(t,\beta_1,\dots,\beta_d)=\sum_{i=0}^{k}\delta_i(\beta_1,\dots,\beta_d)t^i$. Hence

\begin{equation}\label{equation opposite operation first assigmment}
D^k (f|_{x_1=\beta_1,\dots,x_k=\beta_k}) = D^k (\delta_0(\beta_1,\dots,\beta_k),\delta_1(\beta_1,\dots,\beta_d),\dots,\delta_k(\beta_1,\dots,\beta_d)).
\end{equation}

As we mentioned before the proof, $D^k$ in Eq.~\ref{equation opposite operation first discriminant} and Eq.~\ref{equation opposite operation first assigmment} is the same polynomial, although the base field is different, hence the expressions are equal.
\end{proof}

\begin{lem} \label{lem main - squarefree values of polynomial separable in t}
Let $\FF$ be a field. Let $f \in \polring{\FF[t]}{d}{x}$ be a polynomial. Then there exists a polynomial $P \in \FF[x_1,\dots,x_d]$ such that
\begin{multline}
\{(\beta_1,\dots,\beta_d) \in \FF^d : f(t,\beta_1,\dots,\beta_d) \mbox{ is not separable} \} \subseteq\\ \{(\beta_1,\dots,\beta_d) \in \FF^d:P(\beta_1,\dots,\beta_d)=0 \}
\end{multline}
where
\begin{equation}\label{equation single variable main lemma pol bound}
\deg P \leq (2\deg_t f -1)\deg_{\vec{x}} f.
\end{equation}

The polynomial $P$ is non-zero if and only if $f$ is separable in $t$.
\end{lem}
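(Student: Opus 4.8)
The plan is to take $P$ to be, up to a harmless factor, the discriminant of $f$ with respect to the variable $t$. Write $k:=\deg_t f$ (if $k=0$ then $f$ does not involve $t$ and the assertion is degenerate, so I assume $k\ge 1$) and $f=\sum_{i=0}^{k}\delta_i(x_1,\dots,x_d)\,t^i$ with $\delta_k\not\equiv 0$ in $\FF[x_1,\dots,x_d]$. Viewing $f$ as a polynomial in $t$ over the field $\FF(x_1,\dots,x_d)$, its leading coefficient $\delta_k$ is a nonzero element of that field, so by the definition of $D^k$ the genuine discriminant of $f$ over $\FF(x_1,\dots,x_d)$ is exactly $D^k_t f=D^k(\delta_0,\dots,\delta_k)$, which lies in $\FF[x_1,\dots,x_d]$. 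I would then set
$$
P:=\delta_k\cdot D^k_t f\ \in\ \FF[x_1,\dots,x_d].
$$

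First I would verify the set inclusion. Fix $(\beta_1,\dots,\beta_d)\in\FF^d$ with $f(t,\beta_1,\dots,\beta_d)$ not separable. If $\delta_k(\beta_1,\dots,\beta_d)=0$ then $P(\beta_1,\dots,\beta_d)=0$ at once. Otherwise $f(t,\beta_1,\dots,\beta_d)$ has $t$-degree exactly $k\ge 1$, so Lemma~\ref{lem discriminant calculation} applies and gives
$$
(D^k_t f)|_{x_1=\beta_1,\dots,x_d=\beta_d}=D^k\bigl(f(t,\beta_1,\dots,\beta_d)\bigr)=\Delta\bigl(f(t,\beta_1,\dots,\beta_d)\bigr);
$$
since $f(t,\beta_1,\dots,\beta_d)$ is nonconstant and not separable, it has a repeated root in $\overline{\FF}$, so its discriminant vanishes, and hence $P(\beta_1,\dots,\beta_d)=0$. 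I expect this to be the only delicate point: the sole purpose of the extra factor $\delta_k$ is to absorb the locus where the specialization $f(t,\beta_1,\dots,\beta_d)$ drops $t$-degree, where Lemma~\ref{lem discriminant calculation} — which concerns the degree-$k$ expression $D^k$ and not the honest discriminant of a lower-degree polynomial — cannot be invoked.

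Next I would check the clause about $P$ being nonzero. Since $\delta_k\not\equiv 0$ and $\FF[x_1,\dots,x_d]$ is a domain, $P\not\equiv 0$ is equivalent to $D^k_t f\not\equiv 0$; and since $D^k_t f$ is the genuine discriminant of $f$ over $\FF(x_1,\dots,x_d)$, this is equivalent to $f$ having no repeated root in $\overline{\FF(x_1,\dots,x_d)}$, that is, to $f$ being separable in $t$.

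Finally, the degree estimate is a direct computation: the universal discriminant $D^k$ is a polynomial of total degree $2k-2$ in its $k+1$ coefficient variables (as one sees from the Sylvester-matrix description of the discriminant), while $\deg\delta_i\le\deg_{\vec{x}} f$ for every $i$; substituting therefore gives $\deg(D^k_t f)\le(2k-2)\deg_{\vec{x}} f$, and adding $\deg\delta_k\le\deg_{\vec{x}} f$ yields $\deg P\le(2\deg_t f-1)\deg_{\vec{x}} f$, as required. Apart from the degree-drop subtlety above, the argument uses only Lemma~\ref{lem discriminant calculation} and the standard properties of $\Delta$ and $D^k$ recorded in Section~\ref{section Definitions and notations}.
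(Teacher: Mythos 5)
Your proposal is correct and follows essentially the same route as the paper: it takes $P=\delta_k\cdot D^k_t f$, handles the degree-drop locus via the factor $\delta_k$, invokes Lemma~\ref{lem discriminant calculation} to commute $D^k$ with specialization on the complementary locus, and obtains the same degree bound from the homogeneity of $D^k$. No gaps.
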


\begin{proof}
Let $k=\deg_t f$. Write $f=\sum_{i=0}^{k}\delta_i(x_1,\dots,x_d)t^i$ where $\delta_k \not = 0$. Let $P \in \polring{\FF}{d}{x}$ defined by $P:=(D^k_t f)\cdot \delta_k$. Note that $\delta_k$ is nonzero, and $D^k_t f = \Delta_t f$ is nonzero if and only if $f$ is separable in $t$. Hence $P$ is nonzero if and only if $f$ is separable in $t$.

Now suppose $\beta_1,\dots,\beta_d \in \FF$ are such that $f(t,\beta_1,\dots,\beta_d)$ is not separable. We need to show that $P(\beta_1,\dots,\beta_d)=0$.
Assume first $\deg(f|_{x_1=\beta_1,\dots,x_d=\beta_d}) = k$. Then $\Delta (f|_{x_1=\beta_1,\dots,x_d=\beta_d}) = D^k (f|_{x_1=\beta_1,\dots,x_d=\beta_d})$. Since $f|_{x_1=\beta_1,\dots,x_d=\beta_d}$ is not separable, $\Delta ( f|_{x_1=\beta_1,\dots,x_d=\beta_d}) = 0$. By Lemma~\ref{lem discriminant calculation} we get:
$$
0=\Delta (f|_{x_1=\beta_1,\dots,x_d=\beta_d})=D^k (f|_{x_1=\beta_1,\dots,x_d=\beta_d}) = (D^k_t f)|_{x_1=\beta_1,\dots,x_d=\beta_d}.
$$
Hence $(D^k_t f)(\beta_1,\dots,\beta_d) = 0$.
Now assume $\deg(f|_{x_1=\beta_1,\dots,x_d=\beta_d}) < k$ then $\delta_k(\beta_1,\dots,\beta_d)=0$. Hence in any case $P(\beta_1,\dots,\beta_d)=0$.

It remains to bound the degree of $P$.
$$
D^k_t f= D^k(\delta_0(x_1,\dots,x_d),\dots,\delta_k(x_1,\dots,x_d))
$$

$$
P = \delta_k D^k_t f = \delta_k (x_1,\dots,x_d) D^k(\delta_0(x_1,\dots,x_d),\dots,\delta_k(x_1,\dots,x_d))
$$

$D^k \in \Z[x_0,\dots,x_k]$ is a homogenous polynomial of total degree $2k-2$.
$\deg \delta_i \leq \deg_{\vec{x}} f$ for any $i$, $0 \leq i \leq k$. We get
$$
\deg (\delta_k D^k_t) \leq \deg_{\vec{x}} f+ (2k-2)\deg_{\vec{x}} f = (2k-1)\deg_{\vec{x}} f = (2\deg_t f -1)\deg_{\vec{x}} f.
$$
\end{proof}

\section{Background and general facts that are used to prove the main results}\label{section Preliminary facts and results}

\subsection{Separable and \sqf{} polynomials}

Since the following theorem and two consequences of it are not related to the main subject of the note, we state them in this section without a proof. As we did not find the exact theorems in another source, for the completeness of the note we prove them at the appendix.

%The following theorem and its two corollaries, will be used later. Although they are not related to the main subject of the note, since we did not find the exact theorems in another source, for the complexness of the note we prove them at the appendix.

\begin{thm}\label{thm absolutely square-free equivalent conditions}Let $\FF$ be a field of positive characteristic $p$, and let $\overline{\FF}$ be the algebraic closure of $\FF$. Let
\begin{equation}\label{equation ajoin roots of order p}
\FF^{\frac{1}{p}}=\FF\left (\left \{c^{\frac{1}{p}}: c \in \FF\right \}\right ).
\end{equation}
Let $f \in \polring{\FF}{d}{x}$. The following are equivalent:
\begin{enumerate}
  \item \label{absolutely squarefree 1} $f$ is square-free as an element in $\polring{\overline{\FF}}{d}{x}$.
  \item \label{absolutely squarefree 2}$f$ is square-free as an element in $\polring{\FF^{\frac{1}{p}}}{d}{x}$.
  \item \label{absolutely squarefree 3} $f$ is square-free as an element in $\polring{\FF}{d}{x}$, and $f$ does not have an irreducible factor $g$ such that $g \in \polring{\FF}{d}{x^p}$.
\end{enumerate}
\end{thm}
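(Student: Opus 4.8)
The statement is a characterization of "absolute square-freeness" over a field of positive characteristic, and the natural strategy is to prove the cycle of implications $(\ref{absolutely squarefree 1}) \Rightarrow (\ref{absolutely squarefree 2}) \Rightarrow (\ref{absolutely squarefree 3}) \Rightarrow (\ref{absolutely squarefree 1})$. Throughout I would reduce to the univariate case where possible: a multivariate polynomial over a UFD is square-free iff it is square-free after a generic linear change of variables making it monic of the same degree in one variable — or more directly, I would work with Gauss's lemma, viewing $\polring{\FF}{d}{x}$ as $(\polfield{\FF}{d-1}{x})[x_d]$ and tracking how square-freeness and the content behave. But the cleanest route is probably to handle the irreducible-factor structure directly.

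The implication $(\ref{absolutely squarefree 1}) \Rightarrow (\ref{absolutely squarefree 2})$ is immediate since $\polring{\FF^{1/p}}{d}{x} \subseteq \polring{\overline{\FF}}{d}{x}$: if $g^2 \mid f$ in the larger ring with $g$ noninvertible, then... wait, that is the wrong direction. Rather: square-free over $\overline{\FF}$ trivially implies square-free over the subring $\FF^{1/p}$, because a square factor over the subring is a fortiori a square factor over $\overline{\FF}$. So $(\ref{absolutely squarefree 1}) \Rightarrow (\ref{absolutely squarefree 2})$ is trivial, and likewise $(\ref{absolutely squarefree 2})\Rightarrow$ (first half of $(\ref{absolutely squarefree 3})$, square-freeness over $\FF$) is trivial. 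The content of the theorem is therefore concentrated in two places: (i) showing that square-freeness over $\FF^{1/p}$ forces the "no factor in $\polring{\FF}{d}{x^p}$" condition, and (ii) the closing implication $(\ref{absolutely squarefree 3}) \Rightarrow (\ref{absolutely squarefree 1})$.

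For (i), the key point is the Frobenius: if $g \in \polring{\FF}{d}{x^p}$ is an irreducible factor of $f$ over $\FF$, write $g(\vec{x}) = h(\vec{x}^{\,p})$ with $h \in \polring{\FF}{d}{x}$, where $\vec{x}^{\,p}$ abbreviates $(x_1^p,\dots,x_d^p)$; passing to $\FF^{1/p}$, each coefficient $c$ of $h$ has a $p$-th root $c^{1/p} \in \FF^{1/p}$, so $h(\vec{x}^{\,p}) = \big(h^{1/p}(\vec{x})\big)^p$ where $h^{1/p}$ is obtained by replacing each coefficient by its $p$-th root. Thus $g$ becomes a perfect $p$-th power (hence a square, as $p \geq 2$) in $\polring{\FF^{1/p}}{d}{x}$, contradicting square-freeness there. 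For (ii), I argue contrapositively: suppose $f$ is not square-free over $\overline{\FF}$, so some monic irreducible $\phi \in \polring{\overline{\FF}}{d}{x}$ satisfies $\phi^2 \mid f$. Decompose $f$ into irreducibles over $\FF$; at least one $\FF$-irreducible factor $g$ must be divisible by $\phi$, and in $\polring{\overline{\FF}}{d}{x}$ the factorization of $g$ into irreducibles is either separable (distinct conjugate factors, by the theory of how a separable irreducible splits over the algebraic closure) or — the obstruction case — $g$ is inseparable in some variable, which in characteristic $p$ forces $g \in \polring{\FF}{d}{x_i^p}$ for that $i$ after possibly extracting content; this is precisely the structure one relates to condition $(\ref{absolutely squarefree 3})$. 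Either $f$ has a repeated $\FF$-irreducible factor (so not square-free over $\FF$), or some $\FF$-irreducible factor $g$ with $\phi^2 \mid g$ over $\overline{\FF}$ is itself inseparable, which by the standard fact (a univariate irreducible over a field of characteristic $p$ whose algebraic-closure factorization has repeated roots is a polynomial in $x^p$) lands in $\polring{\FF}{d}{x^p}$, violating the second half of $(\ref{absolutely squarefree 3})$.

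The main obstacle I anticipate is the bookkeeping in the multivariate setting: the clean univariate facts ("an irreducible polynomial over a field is separable unless it lies in $\FF[x^p]$", and "distinct $\overline{\FF}$-irreducible factors of a separable $\FF$-irreducible are pairwise non-associated") need to be promoted to several variables, which requires care with Gauss's lemma, with contents in $\polring{\FF}{d-1}{x}$, and with the fact that an irreducible multivariate polynomial can be inseparable in more than one variable. I would isolate a sub-lemma: for $f \in \polring{\FF}{d}{x}$ irreducible over $\FF$, $f$ is square-free over $\overline{\FF}$ if and only if $f$ is separable in at least one variable $x_i$, if and only if $f \notin \polring{\FF}{d}{x_i^p}$ for at least one $i$. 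Granting this sub-lemma, the three-way equivalence in the theorem follows by assembling the irreducible factorization over $\FF$ and noting that a product of pairwise non-associated square-free-over-$\overline{\FF}$ irreducibles could still share conjugate factors over $\overline{\FF}$ — so one additionally needs that $\FF$-non-associated $\FF$-irreducibles have coprime factorizations over $\overline{\FF}$, which is again standard. Since the theorem is flagged as peripheral and is deferred to the appendix, I would keep the exposition at the level of "reduce to the known univariate statements plus Gauss's lemma" rather than reproving Galois-theoretic folklore.
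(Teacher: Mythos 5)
Your overall architecture matches the paper's: the cycle $(\ref{absolutely squarefree 1})\Rightarrow(\ref{absolutely squarefree 2})\Rightarrow(\ref{absolutely squarefree 3})\Rightarrow(\ref{absolutely squarefree 1})$, the Frobenius argument (extract a $p$-th root of each coefficient in $\FF^{\frac{1}{p}}$ to exhibit a repeated factor) for $(\ref{absolutely squarefree 2})\Rightarrow(\ref{absolutely squarefree 3})$, and, for $(\ref{absolutely squarefree 3})\Rightarrow(\ref{absolutely squarefree 1})$, localizing an $\overline{\FF}$-irreducible square divisor $\phi$ of $f$ to a single $\FF$-irreducible factor $g$ and then invoking the univariate inseparability criterion. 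The fact you defer as ``standard'' --- that non-associated $\FF$-irreducibles stay coprime over $\overline{\FF}$, so that $\phi^2$ must divide one factor $g$ --- is exactly what the paper proves in Lemma~\ref{lem prime ideal generator} and Lemma~\ref{lem square divisor in extension devides one of the facotrs}, via the generator of $\langle \phi\rangle\cap\polring{\FF}{d}{x}$; citing it rather than reproving it is a defensible choice.

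The genuine gap is in the final step of $(\ref{absolutely squarefree 3})\Rightarrow(\ref{absolutely squarefree 1})$. From $\phi^2\mid g$ you conclude that $g$ is inseparable ``in some variable $x_i$,'' hence lies in $\FF[x_1,\dots,x_i^p,\dots,x_d]$ for that one $i$; but condition~(\ref{absolutely squarefree 3}) concerns membership in $\FF[x_1^p,\dots,x_d^p]$, which requires inseparability in \emph{every} variable occurring in $g$. These are not the same: $g=x_1^p-x_2$ lies in $\FF[x_1^p,x_2]$ and is inseparable in $x_1$, yet it is irreducible (hence square-free) over $\overline{\FF}$ and is not in $\FF[x_1^p,x_2^p]$. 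To close the gap you must show $\deg_{x_l}\phi>0$ for every $x_l$ with $\deg_{x_l}g>0$; this holds because $g$, being $\FF$-irreducible, is primitive as a polynomial in $x_l$ over $\FF[x_1,\dots,x_{l-1},x_{l+1},\dots,x_d]$, primitivity persists over $\overline{\FF}$ (the same coprimality fact again), and a square divisor not involving $x_l$ would have to divide the content. This is precisely the clause ``any variable that appears in $f_j$ appears in $h$'' in Lemma~\ref{lem square divisor in extension devides one of the facotrs}, and it is the one piece of multivariate bookkeeping your write-up skips. Relatedly, the middle condition of your sub-lemma (``separable in at least one variable'') must be restricted to variables actually occurring in $f$: for $d=2$ and $c\in\FF$ not a $p$-th power, $f=x_1^p-c$ is vacuously separable in $x_2$ but is not square-free over $\overline{\FF}$.
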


\begin{cor}\label{cor square-free equals absolutely square-free}
Let $\FF$ be a perfect field. Let $\overline{\FF}$ be the algebraic closure of $\FF$. Let $f \in \polring{\FF}{d}{x}$. Then $f$ is \sqf{} in $\polring{\FF}{d}{x}$ if and only if $f$ is \sqf{} in $\polring{\overline{\FF}}{d}{x}$.
\end{cor}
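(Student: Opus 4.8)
The plan is to split into the two implications, and for the non‑trivial one to separate positive characteristic (where Theorem~\ref{thm absolutely square-free equivalent conditions} does all the work) from characteristic zero (where the statement is classical).

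The implication ``\sqf{} in $\polring{\overline{\FF}}{d}{x}$ $\Rightarrow$ \sqf{} in $\polring{\FF}{d}{x}$'' needs no hypothesis on $\FF$ and I would dispatch it first: if $g\in\polring{\FF}{d}{x}$ is a non‑constant polynomial with $g^{2}\mid f$ in $\polring{\FF}{d}{x}$, then $g$ is still non‑constant (hence non‑invertible) in $\polring{\overline{\FF}}{d}{x}$ and $g^{2}\mid f$ there, so $f$ is not \sqf{} over $\overline{\FF}$; contrapositively, \sqf{}ness over $\overline{\FF}$ forces \sqf{}ness over $\FF$.

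For the converse in characteristic $p>0$ the idea is simply to feed perfectness into Theorem~\ref{thm absolutely square-free equivalent conditions}. Since $\FF$ is perfect, $c^{1/p}\in\FF$ for every $c\in\FF$, so the field $\FF^{1/p}$ of Eq.~\eqref{equation ajoin roots of order p} equals $\FF$; hence condition~\ref{absolutely squarefree 2} of that theorem reads ``$f$ is \sqf{} in $\polring{\FF}{d}{x}$'', and its equivalence with condition~\ref{absolutely squarefree 1}, ``$f$ is \sqf{} in $\polring{\overline{\FF}}{d}{x}$'', is exactly the assertion of the corollary. (This already yields both implications at once when $\chr(\FF)=p>0$.) For $\chr(\FF)=0$ I would argue by the standard two‑step reduction: writing $f=c\prod_i p_i^{e_i}$ with the $p_i$ pairwise non‑associate irreducibles, \sqf{}ness over $\FF$ means every $e_i=1$; since the gcd of two polynomials is unchanged under the extension $\FF\subseteq\overline{\FF}$, distinct $p_i$ remain coprime over $\overline{\FF}$, so it suffices to show each irreducible $p\in\polring{\FF}{d}{x}$ is \sqf{} over $\overline{\FF}$. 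For that, pick a variable $x_j$ actually occurring in $p$; by Gauss's lemma $p$ is irreducible in $x_j$ over the rational function field $\FF(x_1,\dots,x_{j-1},x_{j+1},\dots,x_d)$, hence separable because the characteristic is $0$, hence \sqf{} over the algebraic closure of that field, which contains $\overline{\FF}$.

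There is essentially no serious obstacle here: the characteristic‑$p$ case is formal once Theorem~\ref{thm absolutely square-free equivalent conditions} is available, and the characteristic‑zero case is classical. The only input in the latter that is not purely formal is the base‑change invariance of the gcd (equivalently: if $f,g\in\polring{\FF}{d}{x}$ have a common non‑unit factor in $\polring{\overline{\FF}}{d}{x}$, they already have one over $\FF$), which is standard and, if one wants to be self‑contained, can be proved by a resultant computation in a suitable variable together with the fact that a resultant is a universal polynomial in the coefficients and therefore insensitive to the base field.
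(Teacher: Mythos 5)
Your positive-characteristic argument is precisely the paper's: perfectness gives $\FF^{\frac{1}{p}}=\FF$, so condition~\ref{absolutely squarefree 2} of Theorem~\ref{thm absolutely square-free equivalent conditions} collapses to ``\sqf{} in $\polring{\FF}{d}{x}$'' and its equivalence with condition~\ref{absolutely squarefree 1} finishes both directions at once; the trivial direction is also handled identically. The only divergence is in characteristic $0$, where the paper cites its appendix Lemma~\ref{lem absolutely square free is squarefree in char zero} and you substitute your own argument.

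That argument has one unaddressed case. After reducing to an irreducible $p\in\polring{\FF}{d}{x}$, you fix a variable $x_j$ occurring in $p$, pass to $K_1=\FF(x_1,\dots,x_{j-1},x_{j+1},\dots,x_d)$, and deduce that $p$ is separable, hence \sqf{}, in $\overline{K_1}[x_j]$. But \sqf{}ness in $\overline{K_1}[x_j]$ does not by itself yield \sqf{}ness in $\polring{\overline{\FF}}{d}{x}$: a nonconstant irreducible $h\in\polring{\overline{\FF}}{d}{x}$ with $h^2\mid p$ could have $\deg_{x_j}h=0$, in which case $h$ is a \emph{unit} of $\overline{K_1}[x_j]$ and your separability statement says nothing about it. To close this you must either (a) show that $p$, which is primitive as a polynomial in $x_j$ over $\FF[x_1,\dots,x_{j-1},x_{j+1},\dots,x_d]$, remains primitive after extending scalars to $\overline{\FF}$ --- this does follow from the gcd base-change fact you invoke elsewhere, but you never apply it at this step --- or (b) do what the paper's Lemma~\ref{lem absolutely square free is squarefree in char zero} does: choose the working variable $x_l$ so that $\deg_{x_l}h>0$ (possible since $h$ is nonconstant), whence automatically $\deg_{x_l}p>0$ and the repeated root of $p$ in an algebraic closure of $K_1$ contradicts separability, with no primitivity issue arising. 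The rest of your reduction (the $p_i$ staying pairwise coprime over $\overline{\FF}$) is correct modulo the same standard base-change fact, which the paper makes rigorous via Lemma~\ref{lem prime ideal generator}; once the gap above is patched the proof is complete.
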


\begin{cor}\label{cor if f is separable in t it has a factor in ftp}
Let $\FF$ be a field. Let $f \in \polring{\FF[t]}{d}{x}$ be a \sqf{} polynomial.
\begin{enumerate}
\item\label{part if char=0 then f is separable}If $\chr(\FF)=0$ then $f$ is separable in $t$.
\item\label{part part if char>0 then f has a factor in ftp} If $\chr(\FF)>0$ and $f$ is not separable in $t$ then there exists an irreducible $g \in \polring{\FF[t^p]}{d}{x}$ which divides $f$. Also $\deg_t g > 0$.
\end{enumerate}
\end{cor}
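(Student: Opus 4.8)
The plan is to derive both parts from Theorem~\ref{thm absolutely square-free equivalent conditions} applied with the variable $t$ playing the role of one of the polynomial variables; that is, we regard $f$ as an element of $R[t]$ where $R = \polring{\FF}{d}{x}$, or more precisely we treat $t, x_1,\dots,x_d$ symmetrically and read off the conclusion for the distinguished variable $t$. The key translation is this: a polynomial $h \in \FF[t]$ (over a field) is separable exactly when it is \sqf{} in $\overline{\FF}[t]$, and more generally $f$ being \emph{separable in $t$} means $f$ is \sqf{} as a univariate polynomial in $t$ over the field $\polfield{\FF}{d}{x}$. So the statement ``$f$ is separable in $t$'' is, by Corollary~\ref{cor square-free equals absolutely square-free} type reasoning, very close to ``$f$ is \sqf{} over the algebraic closure in the $t$-variable.''

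First I would handle part~(\ref{part if char=0 then f is separable}). In characteristic $0$ the field $\FF$ is perfect, hence so is $\FF(x_1,\dots,x_d)$, and by Corollary~\ref{cor square-free equals absolutely square-free} applied over the field $L = \polfield{\FF}{d}{x}$ (viewing $f$ as a single-variable polynomial in $t$ over $L$), $f$ is \sqf{} in $L[t]$ if and only if it is \sqf{} in $\overline{L}[t]$, i.e. separable in $t$. But $f$ is \sqf{} in $\polring{\FF[t]}{d}{x} = \FF[t,x_1,\dots,x_d]$, and by Gauss's lemma this \sqf{}-ness persists when we pass to $L[t]$ (a \sqf{} primitive polynomial stays \sqf{} over the fraction field; after factoring out the content in the $x$-variables, which is itself \sqf{} in $t$, nothing is lost). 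Hence $f$ is separable in $t$.

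For part~(\ref{part part if char>0 then f has a factor in ftp}), I would apply Theorem~\ref{thm absolutely square-free equivalent conditions} over the field $L = \polfield{\FF}{d}{x}$, again with $t$ as the single polynomial variable. Since $f$ is \sqf{} in $\FF[t,x_1,\dots,x_d]$, it is \sqf{} in $L[t]$ as above, so condition~(\ref{absolutely squarefree 3}) of the theorem \emph{fails} only through its second clause: because $f$ is assumed \emph{not} separable in $t$, i.e. not \sqf{} over $\overline{L}$, condition~(\ref{absolutely squarefree 1}) fails, hence~(\ref{absolutely squarefree 3}) fails, and since the \sqf{}-in-$L[t]$ part holds, the obstruction must be that $f$ has an irreducible factor $g \in L[t^p]$. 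Then I would descend from $L[t^p]$ back to $\polring{\FF[t^p]}{d}{x}$: take the irreducible factor $g$ over $L$, clear denominators and remove content to get a primitive (hence by Gauss irreducible) polynomial in $\FF[t^p, x_1,\dots,x_d] = \polring{\FF[t^p]}{d}{x}$ that divides $f$ in $\FF[t,x_1,\dots,x_d]$. Finally $\deg_t g > 0$: if $g$ had $t$-degree $0$ it would be a nonconstant element of $\polring{\FF}{d}{x}$ dividing $f$ with the property that it is ``built from $x$'s only,'' but then $g$ would not account for the inseparability in $t$ — more directly, $g$ must be a genuine factor in $t^p$, so $\deg_{t^p} g \geq 1$ and thus $\deg_t g \geq p > 0$.

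The main obstacle I anticipate is the bookkeeping in the descent step: going from a factorization over $L = \polfield{\FF}{d}{x}$ back to one over $\FF[t,x_1,\dots,x_d]$, and making sure that the factor one obtains genuinely lies in the subring $\polring{\FF[t^p]}{d}{x}$ rather than merely in $\FF[t,x]$. This requires a careful application of Gauss's lemma with $\FF[t^p, x_1,\dots,x_d]$ (or $\FF[x_1,\dots,x_d][t^p]$) as the UFD and its fraction field as the ground field, together with the observation that the content (in the $x$-variables) of a \sqf{} polynomial is \sqf{}, so no square is hidden in it. The separability translation ``separable in $t$'' $\iff$ ``\sqf{} in $\overline{L}[t]$'' is the conceptual pivot and is where I would be most careful to state precisely which field $f$ is being viewed over.
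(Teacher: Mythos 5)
Your proposal is correct and follows essentially the same route as the paper: view $f$ as a univariate polynomial in $t$ over $L=\polfield{\FF}{d}{x}$, transfer square-freeness to $L[t]$ via Gauss's lemma, then apply Corollary~\ref{cor square-free equals absolutely square-free} for characteristic $0$ and Theorem~\ref{thm absolutely square-free equivalent conditions} for positive characteristic, descending the resulting irreducible factor in $L[t^p]$ back to $\FF[x_1,\dots,x_d][t^p]$. The paper's justification that $\deg_t g>0$ is simply that an irreducible element of $L[t^p]$ is not invertible, which is the cleaner form of your closing observation.
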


\subsection{Properties of homomorphism}\label{subsubsection properties of homomorphisms}

The following two lemmas can be viewed as consequences of the structure preserving nature of homomorphisms.

\begin{lem}\label{lem homomorphism preserves invertible elements}
Let $D$ be an integral domain. Let $\Psi: D \rightarrow D$ be a homomorphism. If $u \in D$ is invertible then $\Psi(u)$ is invertible.
\end{lem}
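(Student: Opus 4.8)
The plan is to use the defining property of an invertible element together with multiplicativity of $\Psi$. Since $u$ is invertible in $D$, there exists $v \in D$ with $uv = 1$. Applying $\Psi$ and using that it respects multiplication gives $\Psi(u)\Psi(v) = \Psi(uv) = \Psi(1)$, so the whole argument reduces to identifying $\Psi(1)$.

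The one point that needs care is showing $\Psi(1) = 1$, since a priori a ring homomorphism between the same domain need only be additive and multiplicative. First I would observe that $\Psi(1) = \Psi(1 \cdot 1) = \Psi(1)^2$, hence $\Psi(1)\bigl(\Psi(1) - 1\bigr) = 0$ in $D$. Because $D$ is an integral domain, either $\Psi(1) = 0$ — in which case $\Psi(r) = \Psi(r \cdot 1) = \Psi(r)\Psi(1) = 0$ for all $r$, so $\Psi$ is the zero map and the statement is vacuous (or excluded by convention that homomorphisms are unital) — or $\Psi(1) = 1$. In the relevant case $\Psi(1) = 1$.

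With that in hand I would conclude: $\Psi(u)\Psi(v) = \Psi(1) = 1$, so $\Psi(v)$ is a two-sided multiplicative inverse of $\Psi(u)$ (the ring is commutative here, being an integral domain), and therefore $\Psi(u)$ is invertible in $D$. The main — indeed only — obstacle is the bookkeeping around whether $\Psi$ is assumed unital; once that is settled the computation is immediate and there is nothing further to do.
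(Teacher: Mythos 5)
Your proof is correct and takes essentially the same route as the paper's: apply $\Psi$ to the identity $u u^{-1}=1$ and use multiplicativity to exhibit $\Psi(u^{-1})$ as an inverse of $\Psi(u)$. The paper simply writes $1=\Psi(1)$ under the standard unital convention, so your careful digression ruling out $\Psi(1)=0$ via the integral-domain hypothesis is a harmless (and correct) extra precaution rather than a different argument.
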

\begin{proof}
Since $u$ is invertible, there exists $u^{-1} \in D$ an inverse of $u$, and $1=\Psi(1)=\Psi(u)\Psi (u^{-1})$.
\end{proof}

\begin{lem}\label{lem general automorphism preserves irreducibility}
Let $D$ be an integral domain. Let $\Psi:D \rightarrow D$ be an automorphism. Let $r \in D$. Then $r$ is irreducible if and only if $\Psi(r)$ is irreducible.
\end{lem}

\begin{proof}
We show that if $\Psi(r)$ is irreducible then $r$ is irreducible. The opposite direction follows by symmetry when using the identity $r=\Psi^{-1}(\Psi(r))$. Suppose $r=r_1r_2$ where $r_1,r_2 \in D$. Then $\Psi(r)=\Psi(r_1r_2)=\Psi(r_1)\Psi(r_2)$. Since $\Psi(r)$ is irreducible one of $\Psi(r_1),\Psi(r_2)$ must be invertible. Suppose without loss of generality that $\Psi(r_2)$ is invertible. Hence by Lemma ~\ref{lem homomorphism preserves invertible elements} $r_2 = \Psi^{-1}(\Psi(r_2))$ is invertible. Since $r_1,r_2$ is an arbitrary factorization of $r$ it follows that $r$ is irreducible.
\end{proof}

\subsection{Derivation of rational functions}\label{subsubsection derivation rings}
Let $R$ be a ring. An operation $\delta : R \rightarrow R$
is called \textbf{derivation operator} if it satisfies the following two requirements for any two elements $a,b \in R$:
\begin{enumerate}
\item$\delta(a+b)=\delta(a)+\delta(b)$.
\item $\delta(ab)=\delta (a)b+a\delta(b)$.
\end{enumerate}

For example, $\frac{\partial }{\partial x_i}$ is a derivation operator of $\polring{R}{d}{x}$, as it satisfies both properties of a derivation.

Let $R$ be a ring and let $S$ be a multiplicative subset of $R$. That is, $S$ is such that for any $s_1,s_2 \in S$, $s_1s_2 \in S$. By a standard construction there exists a ring which contains quotients $\frac{a}{s}$ where $a \in R$ and $s \in S$, which we denote by $S^{-1}R$. Any derivation operator of $R$ can be extended to a derivation operator of $S^{-1}R$, where the derivation in $S^{-1}R$ is given by the usual quotient rule for derivatives. We state this fact in the following proposition.

\begin{prop}\label{prop extension of a derivation ring}
Let $R$ be a ring, let $\delta$ be a derivation operator of $R$, and let $S$ be a multiplicative subset of $R$. Let $r_1,r_2 \in R, s_1,s_2 \in S$ such that $\frac{r_1}{s_1}=\frac{r_2}{s_2}$. Then
$$
\frac{\delta(r_1)s_1-r_1\delta(s_1)}{s_1^2}=\frac{\delta(r_2)s_2-r_2\delta(s_2)}{s_2^2}.
$$
\end{prop}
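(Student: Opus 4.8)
The plan is to verify the identity by a direct computation using only the two axioms of a derivation operator, after reducing to a common denominator. The hypothesis $\frac{r_1}{s_1} = \frac{r_2}{s_2}$ in the localization $S^{-1}R$ means precisely that there is some $s \in S$ with $s(r_1 s_2 - r_2 s_1) = 0$ in $R$; when $R$ is an integral domain (the only case we need in the sequel, where $R$ is a polynomial ring and $S$ consists of nonzero elements) this collapses to $r_1 s_2 = r_2 s_1$. I would treat the integral domain case, since that is all that is invoked later, and remark that the general case is handled by carrying the extra factor $s$ through.

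First I would record the target equality cleared of denominators: it is equivalent to
$$
\bigl(\delta(r_1)s_1 - r_1\delta(s_1)\bigr)s_2^2 = \bigl(\delta(r_2)s_2 - r_2\delta(s_2)\bigr)s_1^2 .
$$
Next I would differentiate the hypothesis $r_1 s_2 = r_2 s_1$ using the product rule, obtaining
$$
\delta(r_1)s_2 + r_1\delta(s_2) = \delta(r_2)s_1 + r_2\delta(s_1).
$$
The strategy is then to multiply this differentiated relation by $s_1 s_2$ and use $r_1 s_2 = r_2 s_1$ (and $r_2 s_1 = r_1 s_2$) to massage each side into the desired form. Concretely, multiplying through by $s_1 s_2$ gives $\delta(r_1)s_1 s_2^2 + r_1 s_1 s_2\delta(s_2) = \delta(r_2)s_1^2 s_2 + r_2 s_1 s_2\delta(s_1)$; I would then rewrite the cross terms $r_1 s_1 s_2\delta(s_2)$ and $r_2 s_1 s_2\delta(s_1)$ by substituting $r_1 s_2 = r_2 s_1$ into one factor each, so that after collecting terms both sides exhibit the grouping $\delta(r_i)s_i s_j^2 - r_i s_j^2 \delta(s_i)$ up to the symmetric rearrangement. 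This is a short symbol-pushing exercise once the substitution is made in the right places.

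The only subtle point — and the one place I would be careful — is that $R$ is merely a ring, not necessarily an integral domain, so I cannot cancel a common factor of $s \in S$ after the computation; instead I must keep it and conclude equality in $S^{-1}R$ rather than in $R$. Thus the honest statement of the argument is: from $s(r_1 s_2 - r_2 s_1) = 0$ one differentiates to get $s\bigl(\delta(r_1)s_2 + r_1\delta(s_2) - \delta(r_2)s_1 - r_2\delta(s_1)\bigr) + \delta(s)(r_1 s_2 - r_2 s_1) = 0$, and one runs the same manipulation carrying the factor $s$, finally concluding that $s^3$ (or an appropriate power) times the difference of the two sides vanishes, which is exactly what is needed for equality of the two quotients in $S^{-1}R$. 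The main obstacle is bookkeeping the extra annihilator $s$ correctly; in the integral-domain case that the paper actually uses, this disappears entirely and the proof is the two-line product-rule computation sketched above.
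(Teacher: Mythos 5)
Your proposal is correct. Note that the paper does not actually prove this proposition at all: it states it and refers the reader to Kolchin, Chapter 1, so there is no in-paper argument to compare against; your write-up supplies a self-contained verification where the paper only cites a reference. The computation itself checks out: from $r_1 s_2 = r_2 s_1$ the product rule gives $\delta(r_1)s_2 + r_1\delta(s_2) = \delta(r_2)s_1 + r_2\delta(s_1)$, and multiplying by $s_1 s_2$ and substituting $r_1 s_2 = r_2 s_1$ into each cross term yields exactly
$$
\bigl(\delta(r_1)s_1 - r_1\delta(s_1)\bigr)s_2^2 = \bigl(\delta(r_2)s_2 - r_2\delta(s_2)\bigr)s_1^2,
$$
which is the cleared form of the claim. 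Your caution about the general (non-domain) case is also handled correctly: starting from $s(r_1 s_2 - r_2 s_1)=0$, multiplying the differentiated relation by $s$ kills the $\delta(s)(r_1 s_2 - r_2 s_1)$ term, and one further factor of $s$ legitimizes the substitution $s\,r_1 s_2 = s\,r_2 s_1$ in the cross terms; in fact $s^2$ already suffices as the annihilating element, but ``an appropriate power,'' as you say, is all that equality in $S^{-1}R$ requires. Since the paper only ever applies the proposition with $R$ a polynomial ring over a field and $S$ its nonzero elements, your integral-domain shortcut covers every use in the sequel, and your fuller version proves the proposition at the stated level of generality.
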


A proof of the above proposition and more information about the extension of the derivation operator to $S^{-1}R$ can be found in \cite{Kolchin} Chapter 1.

We are interested in the case where the ring is a polynomial ring over a field, $\polring{\FF}{d}{x}$, and the derivation operators are given by the formal partial derivatives $\partiald{f}{x_1},\dots,\partiald{f}{x_d}$. We will also derive rational functions over a field, and the meaning of that is made precise by the general facts about derivations ring which are described above.

For convenience of reference, we state the following known fact about derivations which is used in this note.

%\begin{lem}\label{lem quotient derivative vanish}
%Let $f \in \FF(t)$. Then $f \in \FF(t^p)$ if and only if $\ordinaryd{f}{t}=0$.
%\end{lem}

\begin{prop}[Chain Rule]
Let $f \in \polfield{\FF}{d}{x}$, let $g=(g_1,\dots,g_d)$ where $g_1,\dots,g_d \in \FF(x)$, and let $f \circ g \in \FF(x)$, $f \circ g(x)=f(g_1(x),\dots,g_d(x))$ then
\begin{equation}\label{equation the chain rule}
\ordinaryd{f \circ g}{x} = \sum_{i=1}^d \partiald{f}{x_i}\ordinaryd{g_i}{x}.
\end{equation}
\end{prop}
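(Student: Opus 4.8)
The plan is to establish \eqref{equation the chain rule} first for $f$ a polynomial and then to bootstrap to arbitrary rational functions by the quotient rule. Two conventions are worth making explicit at the outset. First, the symbol $\partiald{f}{x_i}$ occurring on the right of \eqref{equation the chain rule} stands for the rational function $\left(\partiald{f}{x_i}\right)\circ g\in\FF(x)$, i.e. the formal partial derivative $\partiald{f}{x_i}\in\polfield{\FF}{d}{x}$ evaluated at $g$. Second, both $\partiald{}{x_i}$ on $\polfield{\FF}{d}{x}$ and $\ordinaryd{}{x}$ on $\FF(x)$ are the canonical extensions of the polynomial derivations to the respective fraction fields, so by Proposition~\ref{prop extension of a derivation ring} they are well-defined independently of the chosen representatives and satisfy the quotient rule. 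I will also use repeatedly that substitution $h\mapsto h\circ g$ is a ring homomorphism wherever it is defined, so it commutes with sums, products, and (on fractions) division.

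\emph{Step 1: the polynomial case.} Fix $g$ and let $\Sigma\subseteq\polring{\FF}{d}{x}$ be the set of polynomials for which \eqref{equation the chain rule} holds. Both sides of \eqref{equation the chain rule} are $\FF$-linear in $f$, so $\Sigma$ is an $\FF$-subspace. It contains all constants (both sides are $0$) and each $x_i$ (the left side is $\ordinaryd{g_i}{x}$ and the right side is $\sum_j\delta_{ij}\,\ordinaryd{g_j}{x}=\ordinaryd{g_i}{x}$, since $\partiald{x_i}{x_j}=\delta_{ij}$). Finally $\Sigma$ is closed under products: if $f_1,f_2\in\Sigma$, then since $(f_1f_2)\circ g=(f_1\circ g)(f_2\circ g)$, the Leibniz rule for $\ordinaryd{}{x}$ gives
\[
\ordinaryd{(f_1f_2)\circ g}{x}=\ordinaryd{f_1\circ g}{x}\,(f_2\circ g)+(f_1\circ g)\,\ordinaryd{f_2\circ g}{x};
\]
substituting the expansions of $\ordinaryd{f_1\circ g}{x}$ and $\ordinaryd{f_2\circ g}{x}$ coming from $f_1,f_2\in\Sigma$ and using that $\circ g$ is a homomorphism, this equals $\sum_i\left(\partiald{f_1}{x_i}f_2+f_1\partiald{f_2}{x_i}\right)\!\circ g\cdot\ordinaryd{g_i}{x}$, which by the Leibniz rule for $\partiald{}{x_i}$ is precisely $\sum_i\left(\partiald{(f_1f_2)}{x_i}\right)\!\circ g\cdot\ordinaryd{g_i}{x}$; hence $f_1f_2\in\Sigma$. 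An $\FF$-subspace of $\polring{\FF}{d}{x}$ that contains $1$ and $x_1,\dots,x_d$ and is closed under multiplication is all of $\polring{\FF}{d}{x}$, so $\Sigma=\polring{\FF}{d}{x}$.

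\emph{Step 2: the rational case.} Write $f=u/v$ with $u,v\in\polring{\FF}{d}{x}$; since $f\circ g$ is assumed to be a well-defined element of $\FF(x)$, we may choose the representation so that $v\circ g\neq 0$, and then $f\circ g=(u\circ g)/(v\circ g)$. Applying the quotient rule for $\ordinaryd{}{x}$ (Proposition~\ref{prop extension of a derivation ring}), then Step 1 to the polynomials $u$ and $v$, and finally the quotient rule for $\partiald{}{x_i}$ together with the homomorphism property of $\circ g$, we compute
\begin{align*}
\ordinaryd{f\circ g}{x}
&=\frac{\ordinaryd{u\circ g}{x}\,(v\circ g)-(u\circ g)\,\ordinaryd{v\circ g}{x}}{(v\circ g)^2}\\
&=\frac{\left(\sum_i\left(\partiald{u}{x_i}\right)\!\circ g\;\ordinaryd{g_i}{x}\right)(v\circ g)-(u\circ g)\left(\sum_i\left(\partiald{v}{x_i}\right)\!\circ g\;\ordinaryd{g_i}{x}\right)}{(v\circ g)^2}\\
&=\sum_i\left(\frac{\partiald{u}{x_i}\,v-u\,\partiald{v}{x_i}}{v^2}\right)\!\circ g\;\ordinaryd{g_i}{x}
=\sum_i\left(\partiald{f}{x_i}\right)\!\circ g\;\ordinaryd{g_i}{x},
\end{align*}
which is \eqref{equation the chain rule}.

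\emph{On the difficulty.} There is no genuine obstacle: the statement is purely formal and characteristic-free. The only points that need care are foundational rather than computational: interpreting $\partiald{f}{x_i}$ in \eqref{equation the chain rule} as its composition with $g$; invoking Proposition~\ref{prop extension of a derivation ring} to know that $\ordinaryd{}{x}$ and $\partiald{}{x_i}$ are well-defined on the fraction fields and obey the quotient rule used in Step 2; and, in Step 2, the existence of a representative $u/v$ of $f$ with $v\circ g\neq 0$, which is exactly what the hypothesis $f\circ g\in\FF(x)$ provides. The Leibniz computation in Step 1 is the only moving part, and it is routine.
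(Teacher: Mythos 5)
Your proof is correct. Note that the paper itself offers no proof of this proposition at all --- it is stated ``for convenience of reference'' as a known fact about derivations --- so there is nothing to compare your argument against; what you have written is the standard formal argument (verify the identity on an $\FF$-subalgebra generated by $1$ and the $x_i$ via linearity and the Leibniz rule, then pass to fractions with the quotient rule of Proposition~\ref{prop extension of a derivation ring}), and it is complete. The two points you flag explicitly --- that $\partiald{f}{x_i}$ in Eq.~\ref{equation the chain rule} must be read as $\bigl(\partiald{f}{x_i}\bigr)\circ g$, and that the hypothesis $f\circ g\in\FF(x)$ is what supplies a representative $u/v$ with $v\circ g\neq 0$ --- are exactly the places where care is needed, and you handle both correctly.
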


\section{Proof of Theorem~\ref{thm perturbations of a single variable polynomial finit field}}\label{section Proof of Theorem single variable}

The main steps in the proof of Theorem~\ref{thm perturbations of a single variable polynomial finit field} are as follows. First, we introduce Theorem~\ref{thm single variable - algebraic}, which is an algebraic theorem which holds for any field $\FF$. Secondly, we show that Theorem~\ref{thm perturbations of a single variable polynomial finit field} is a consequence of Theorem~\ref{thm single variable - algebraic}, when the latter is applied in the special case where $\FF$ is assumed to be a finite filed. Finally, we prove Theorem~\ref{thm single variable - algebraic}. For that purpose, we apply Lemma~\ref{lem main - squarefree values of polynomial separable in t} which was introduced in Section~\ref{section the discriminant and constant assignments over a general field}.

\subsection{Reduction to an algebraic theorem which holds for any field}\label{subsection single reducton to an algebraic theorem}
%In this section we convert Theorem~\ref{thm perturbations of a single variable polynomial finit field} into an algebraic theorem which holds over any field $\FF$.

The following lemma provides an elementary upper bound on the number of zeros of a multivariate polynomial over a finite field. A proof can be found in \cite{Schmidt} Chapter 4.

\begin{lem}\label{lem elementary bound on number of zeros}
Let $P \in \polring{\fq}{d}{x}$ be a non-zero polynomial of total degree $n$. Then the number $\mathcal{N}_P$ of zeros of $P(x_1,\dots,x_d)$ in $\fq^d$ satisfies
\begin{equation}\label{equation elementary bound on number of zeros}
\mathcal{N}_P \leq nq^{d-1}.
\end{equation}
\end{lem}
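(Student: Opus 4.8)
The statement is the standard combinatorial (Schwartz–Zippel type) bound $\mathcal{N}_P \le n q^{d-1}$ for the number of $\fq$-points on the hypersurface $P = 0$, where $n = \deg P$. My plan is to prove it by induction on the number of variables $d$, after first normalizing the polynomial so that the top-variable behaviour is visible.

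For the base case $d = 1$, a nonzero polynomial $P \in \fq[x_1]$ of degree $n$ has at most $n = n q^{0}$ roots in $\fq$, since $\fq$ is a field and an integral domain. For the inductive step, assume the bound holds for polynomials in $d-1$ variables. Write $P \in \polring{\fq}{d}{x}$ as a polynomial in $x_d$ with coefficients in $\polring{\fq}{d-1}{x}$:
\[
P(x_1,\dots,x_d) = \sum_{j=0}^{e} Q_j(x_1,\dots,x_{d-1})\, x_d^{j},
\]
where $e \le n$ and $Q_e$ is not the zero polynomial. Let $\deg Q_e = m$; then $m + e \le \deg P = n$. Now split the count of zeros $(x_1,\dots,x_d)$ of $P$ according to whether the point $(x_1,\dots,x_{d-1})$ is a zero of $Q_e$ or not. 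If $Q_e(x_1,\dots,x_{d-1}) = 0$: by the inductive hypothesis there are at most $m q^{d-2}$ such tuples $(x_1,\dots,x_{d-1})$, and each extends to at most $q$ values of $x_d$, contributing at most $m q^{d-1}$ zeros. If $Q_e(x_1,\dots,x_{d-1}) \neq 0$: there are at most $q^{d-1}$ such tuples, and for each of them $P(x_1,\dots,x_{d-1}, x_d)$ is a nonzero polynomial in $x_d$ of degree exactly $e$, hence has at most $e$ roots $x_d \in \fq$; this contributes at most $e q^{d-1}$ zeros. Adding, $\mathcal{N}_P \le (m + e) q^{d-1} \le n q^{d-1}$, completing the induction.

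The only subtlety — and the one place to be careful — is the degenerate case $Q_e$ constant (i.e. $m = 0$), and more generally making sure the inductive hypothesis is invoked on a genuinely nonzero polynomial in fewer variables; both are handled automatically above since $Q_e \neq 0$ by construction and $m q^{d-2} \le m q^{d-1}$ trivially when $m = 0$. There is no real obstacle here: the result is elementary and the reference to \cite{Schmidt} Chapter 4 already covers it, so in the paper it is simply quoted. I include this sketch only to indicate the self-contained argument; in the text the lemma is used as a black box to bound $\mathcal{N}_P$ by $n q^{d-1}$ for the polynomial $P$ produced by Lemma~\ref{lem main - squarefree values of polynomial separable in t}.
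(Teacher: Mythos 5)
Your inductive argument is correct and complete: the split according to whether the leading coefficient $Q_e$ vanishes, together with the observation $m+e\le n$, gives exactly the bound $\mathcal{N}_P\le nq^{d-1}$, and the degenerate cases ($m=0$ or $e=0$) are handled as you note. The paper does not prove this lemma but only cites \cite{Schmidt}, Chapter 4; your proof is the standard elementary one found there, so there is nothing to reconcile.
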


Theorem~\ref{thm perturbations of a single variable polynomial finit field} is a consequence of the following theorem which holds for any field $\FF$.

\begin{thm}\label{thm single variable - algebraic}
Let $\FF$ be a field, and let $\overline{\FF}$ be an algebraic closure of $\FF$. Let $f \in \FF[t][x]$ be a polynomial which is \sqf{} in $\overline{\FF}[t][x]$. Let $\av,\bv,\cv \in \FF[t]$ such that $\gcd(\av,\bv)=1$. Let $N \in \N$. Assume $\deg_x f,\deg_t f, \|(\av,\bv,\cv)\| \leq N$. Then there exists a polynomial $P_{f,\av,\bv,\cv} \in \FF[x_1,x_2]$ which depends on $\av,\bv,\cv$ and $f$ such that
\begin{multline}\label{equation polynomial P property}
\{(\beta_1,\beta_2) \in \FF^2 : f(t,\av(t)\beta_1+\bv(t)\beta_2+\cv(t)) \mbox{ is not separable}\}\\ \subseteq \{(\beta_1,\beta_2) \in \FF^2 : P_{f,\av,\bv,\cv}(\beta_1,\beta_2)=0\}.
\end{multline}
Moreover, there exists a constant $\tilde{C}(N)$ which depends only on $N$ such that$$\deg P_{f,\av,\bv,\cv} \leq \tilde{C}(N).$$
%(2(\|(\av,\bv,\cv)\|\deg_x f + \deg_t f)-1)\deg_x f.$$
$P_{f,\av,\bv,\cv}$ is non-zero if at least one of the following holds:
\begin{enumerate}
\item \label{part thm sigle variable - albgebraic - single variable perturbation}
$p=0$ or $p>C(N)$ where $C(N)$ is a constant which depends only on $N$.%$p> \|(\av,\bv,\cv)\|\deg_{x}f +\deg_t f$.
\item\label{part thm sigle variable - albgebraic - two variables perturbation}
$\frac{\bv}{\av} \not \in \FF(t^p)$ where $\av \not = 0$.
\end{enumerate}
\end{thm}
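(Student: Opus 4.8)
The plan is to construct $P_{f,\av,\bv,\cv}$ by substituting the linear parametrization $x = \cv(t) + \av(t)\beta_1 + \bv(t)\beta_2$ into $f$, thereby producing a polynomial $g \in \FF[t][\beta_1,\beta_2]$, i.e. $g(t,\beta_1,\beta_2) := f(t,\cv(t)+\av(t)\beta_1+\bv(t)\beta_2)$, and then applying Lemma~\ref{lem main - squarefree values of polynomial separable in t} to $g$ with $d=2$. Lemma~\ref{lem main - squarefree values of polynomial separable in t} immediately hands us a polynomial $P \in \FF[\beta_1,\beta_2]$ whose zero set contains every $(\beta_1,\beta_2)$ for which $g(t,\beta_1,\beta_2)$ is not separable in $t$, with the degree bound $\deg P \leq (2\deg_t g - 1)\deg_{\vec\beta} g$. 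Since $\deg_t g \leq \deg_t f + \deg_x f \cdot \max\{\deg\av,\deg\bv,\deg\cv\}$ and $\deg_{\vec\beta} g \leq \deg_x f$, both are bounded in terms of $N$ alone, giving the required $\tilde C(N)$. This disposes of the first two assertions of the theorem with essentially no work beyond bookkeeping.

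The substance of the proof is the non-vanishing of $P$. By the last sentence of Lemma~\ref{lem main - squarefree values of polynomial separable in t}, $P \neq 0$ if and only if $g$ is separable in $t$, so everything reduces to showing: under hypothesis~(\ref{part thm sigle variable - albgebraic - single variable perturbation}) or~(\ref{part thm sigle variable - albgebraic - two variables perturbation}), the polynomial $g(t,\beta_1,\beta_2) = f(t,\cv(t)+\av(t)\beta_1+\bv(t)\beta_2)$, viewed as a univariate polynomial in $t$ over the field $L := \FF(\beta_1,\beta_2)$, has distinct roots in $\overline L$. Equivalently, $g$ and $\partial g/\partial t$ are coprime in $L[t]$, equivalently $g$ is \sqf{} in $L[t]$. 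Here I would use Corollary~\ref{cor if f is separable in t it has a factor in ftp}: if $g$ is \sqf{} in $L[t]$ but not separable in $t$, then $g$ has an irreducible factor lying in $L[\beta_1,\beta_2][t^p] = L[t^p]$ — wait, more carefully, $g \in L[t]$ and the relevant statement is that $g$ has an irreducible factor $h \in L[t^p]$ with $\deg_t h > 0$. So it suffices to prove (a) $g$ is \sqf{} in $L[t]$, and (b) $g$ has no nonconstant irreducible factor in $L[t^p]$.

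For (a): the map $\Phi\colon \overline\FF(\beta_1,\beta_2)[t][x] \to \overline\FF(\beta_1,\beta_2)[t]$ sending $x \mapsto \cv(t)+\av(t)\beta_1+\bv(t)\beta_2$ is an $\overline\FF(\beta_1,\beta_2)[t]$-homomorphism, but it is not injective, so one cannot transport \sqf{}-ness directly. Instead I would change variables: since $\gcd(\av,\bv)=1$ in $\FF[t]$, the pair $(\av,\bv)$ can be completed so that $t$ and $s := \cv+\av\beta_1+\bv\beta_2$ generate the same field extension — more precisely, I would argue that the substitution $(t,x)\mapsto(t, s)$ realizes $\overline L[t][s]$ as a polynomial ring and that $f(t,x)$ \sqf{} in $\overline\FF[t][x]$ forces $f(t,\cv+\av\beta_1+\bv\beta_2)$ to be \sqf{} in $\overline\FF(\beta_1,\beta_2)[t]$, using that $\beta_1,\beta_2$ are transcendental and that specialization of a \sqf{} polynomial at a generic point stays \sqf{} (the resultant/discriminant of $f$ with respect to $x$ is a nonzero element of $\overline\FF[t]$, and its image under the substitution is a nonzero element of $\overline\FF[t,\beta_1,\beta_2]$ because the substitution is generically injective on each fiber). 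Then Theorem~\ref{thm absolutely square-free equivalent conditions} upgrades ``$g$ \sqf{} in $\overline L[t]$'' to the clean statement we want. For (b), the key obstruction — and the part I expect to be hardest — is ruling out a factor in $L[t^p]$: under~(\ref{part thm sigle variable - albgebraic - single variable perturbation}) ($p=0$ or $p$ large) this is automatic since no nonconstant polynomial lies in $L[t^p]$ when $p=0$, and when $p > C(N)$ a factor in $t^p$ would have $t$-degree $\geq p > \deg_t g$, impossible once $C(N)$ is chosen to exceed the $N$-bound on $\deg_t g$. Under~(\ref{part thm sigle variable - albgebraic - two variables perturbation}), I would use the chain rule (the Chain Rule proposition stated above): if $h(t) \in L[t^p]$ divides $g$, then $\partial g/\partial t$ is divisible by $h$ as well modulo the contribution of $\partial s/\partial t = \cv' + \av'\beta_1 + \bv'\beta_2$; tracking this through, the hypothesis $\bv/\av \notin \FF(t^p)$ is exactly what guarantees that $\av'\bv - \av\bv' \neq 0$ (since $(\bv/\av)' = (\bv'\av - \bv\av')/\av^2$ vanishes iff $\bv/\av \in \FF(t^p)$), which in turn forces $\partial s/\partial t$ to be a nonzero polynomial in $t$ of controlled degree that is coprime to any putative $t^p$-factor, yielding a contradiction. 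Assembling (a) and (b) and invoking Corollary~\ref{cor if f is separable in t it has a factor in ftp} gives that $g$ is separable in $t$, hence $P \neq 0$, completing the proof.
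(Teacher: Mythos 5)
Your overall architecture coincides with the paper's: set $g:=f(t,\av(t)x_1+\bv(t)x_2+\cv(t))\in\FF[t][x_1,x_2]$, apply Lemma~\ref{lem main - squarefree values of polynomial separable in t} with $d=2$ to obtain $P$ together with the degree bound, and reduce the whole theorem to showing that $g$ is separable in $t$. That reduction, the bookkeeping for $\tilde C(N)$, and your treatment of hypothesis~(\ref{part thm sigle variable - albgebraic - single variable perturbation}) via $\deg_t g<p$ are all fine. The first gap is in your step (a): you justify \sqf{}-ness of $g$ by asserting that $\Delta_x f$ is a nonzero element of $\overline{\FF}[t]$ which survives the substitution. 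But $f$ being \sqf{} in $\overline{\FF}[t][x]$ does not make $f$ separable in $x$: the polynomial $f=x^p-t$ is irreducible, hence \sqf{}, in $\overline{\FF}[t][x]$, yet $\Delta_x f=0$, so the discriminant carries no information here. The paper instead factors $f$ into irreducibles and proves that $\Psi_l$ carries irreducibles to irreducibles and non-associates to non-associates (Lemma~\ref{lem single variable large char - f is irr then so is rho(f)}): $\Psi_l$ extends to an automorphism of $\FF(t)[x_1,x_2]$, and $\gcd(\av,\bv)=1$ enters exactly to show that $\Psi_l$ preserves primitivity over $\FF[t]$, so Gauss's lemma brings irreducibility back down to $\FF[t][x_1,x_2]$.

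The more serious gap is step (b) under hypothesis~(\ref{part thm sigle variable - albgebraic - two variables perturbation}). Your argument derives the contradiction from $\partial s/\partial t\neq 0$ being ``coprime to the putative $t^p$-factor,'' and at no point uses that $f$ is \sqf{} over $\overline{\FF}$ rather than merely over $\FF$ --- but without that hypothesis the conclusion is false, so no such argument can close. Concretely, take $\FF$ imperfect, $c\in\FF\setminus\FF^p$, $f=x^p-c$ (irreducible, hence \sqf{}, in $\FF[t][x]$, but not \sqf{} in $\overline{\FF}[t][x]$), and $\av=1$, $\bv=t$, $\cv=0$, so that $\bv/\av\notin\FF(t^p)$ and $\partial s/\partial t=\beta_2\neq 0$. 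Then $g=\beta_2^pt^p+\beta_1^p-c$ is irreducible (hence \sqf{}) in $L[t]$, lies in $L[t^p]$, and is inseparable in $t$; every ingredient of your step (b) is present, yet the conclusion fails. What the correct argument (Lemma~\ref{lem single variable low char - separable displacments}) extracts from the chain rule is that an irreducible factor $f_i$ of $f$ with $\Psi_l(f_i)\in\FF[t^p][x_1,x_2]$ must satisfy $\partial f_i/\partial t=0$ \emph{and} $\partial f_i/\partial x=0$ (the hypothesis $\bv'\av-\bv\av'\neq 0$ is used to force the second vanishing), i.e.\ $f_i\in\FF[t^p,x^p]$; the contradiction then comes from Theorem~\ref{thm absolutely square-free equivalent conditions}, because such an $f_i$ is a $p$-th power over $\FF^{\frac{1}{p}}$, which is incompatible with $f$ being \sqf{} in $\overline{\FF}[t][x]$. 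That final invocation is the step your sketch is missing.
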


We now show why Theorem~\ref{thm perturbations of a single variable polynomial finit field} follows from Theorem~\ref{thm single variable - algebraic} in the case where $\FF$ is a finite field. First, over a finite field the requirement in Theorem~\ref{thm single variable - algebraic} that $f$ is \sqf{} in $\overline{\FF}[t][x]$ can be replaced by the requirement that $f$ is \sqf{}. Over a finite field the two requirements are equivalent by Corollary~\ref{cor square-free equals absolutely square-free}, since $\fq$ is a perfect field. Likewise, by the same corollary over a finite field $f(t,\av(t)\beta_1+\bv(t)\beta_2+\cv(t))$ being separable and $f(t,\av(t)\beta_1+\bv(t)\beta_2+\cv(t))$ being \sqf{} can be used interchangeably, since the two are equivalent over a perfect field.

With the assumptions and definitions as in Theorem~\ref{thm perturbations of a single variable polynomial finit field}, we now show why the estimate in Eq.~\ref{equation main theorem single variable estimate} of Theorem~\ref{thm perturbations of a single variable polynomial finit field} follows. Let $\squarefsing{\fq}{f}^c$ be the complement of $\squarefsing{\fq}{f}$ in $\fq[t]$. The error $E(q)$ of estimating
$$\frac{\#(\squarefsing{\fq}{f} \bigcap \persing{\fq}{\av}{\bv}{\cv})}{\#\persing{\fq}{\av}{\bv}{\cv}}$$
by $1$ is given by
\begin{multline}
E(q) = 1-\frac{\#(\squarefsing{\fq}{f} \bigcap \persing{\fq}{\av}{\bv}{\cv})}{\#\persing{\fq}{\av}{\bv}{\cv}} = \frac{\#(\squarefsing{\fq}{f}^c \bigcap \persing{\fq}{\av}{\bv}{\cv})}{\#\persing{\fq}{\av}{\bv}{\cv}}=\\ \frac{\#\{(\beta_1,\beta_2) \in \fq^2 : f(t,\av(t)\beta_1+\bv(t)\beta_2+\cv(t)) \mbox{ is not \sqf{}}\}}{q^2} \leq\\
\frac{\#\{(\beta_1,\beta_2) \in \fq^2 : P_{f,\av,\bv,\cv}(\beta_1,\beta_2)=0\}}{q^2}.
\end{multline}

Assume first that $\av,\bv,\cv$ are such that $P_{f,\av,\bv,\cv}$ is nonzero. Then applying Lemma~\ref{lem elementary bound on number of zeros}
$$
E(q) \leq \frac{\tilde{C}(N)q}{q^2}.
$$
Hence keeping $N$ fixed while $q \to \infty$, $E(q)=O \left (\frac{1}{q} \right )$.

It remains to show why it follows from the assumptions of Theorem~\ref{thm perturbations of a single variable polynomial finit field} that $P_{f,\av,\bv,\cv}$ is non-zero. According to Theorem~\ref{thm single variable - algebraic} $P_{f,\av,\bv,\cv}$ is non-zero if at least one of (\ref{part thm sigle variable - albgebraic - single variable perturbation}) and (\ref{part thm sigle variable - albgebraic - two variables perturbation}) in the same theorem holds. Indeed, by letting $C(N)$ in Theorem~\ref{thm perturbations of a single variable polynomial finit field} be the same as $C(N)$ in Theorem~\ref{thm single variable - algebraic}, (\ref{part thm sigle variable - albgebraic - single variable perturbation}) and (\ref{part thm sigle variable - albgebraic - two variables perturbation}) in Theorem~\ref{thm single variable - algebraic} are the same as (\ref{part single variable theorem large char}) and (\ref{part single variable theorem arbitrary char}) in Theorem~\ref{thm perturbations of a single variable polynomial finit field}. Hence by the assumptions of Theorem~\ref{thm perturbations of a single variable polynomial finit field} at least one of the two holds.

\begin{remark}
The statement that $P_{f,\av,\bv,\cv}$ is nonzero is crucial for the reduction, since this is required in order for the bound in Eq.~\ref{equation elementary bound on number of zeros} to hold. In the proof of Theorem~\ref{thm single variable - algebraic} the existence of $P_{f,\av,\bv,\cv}$ will be provided by Lemma~\ref{lem main - squarefree values of polynomial separable in t} which we proved in Section~\ref{section the discriminant and constant assignments over a general field}, and the part of Theorem~\ref{thm single variable - algebraic} about $P_{f,\av,\bv,\cv}$ not being the zero polynomial will be given by the same lemma.
\end{remark}

\subsection{A single coefficient perturbation}\label{section A single variable perturbation}We now turn to prove Theorem ~\ref{thm single variable - algebraic}. To illustrate the main steps in the proof, we start by proving a special case of Theorem~\ref{thm single variable - algebraic} where $\chr (\FF)$ is large, namely (\ref{part thm sigle variable - albgebraic - single variable perturbation}) in Theorem~\ref{thm single variable - algebraic} holds, $\cv \in \FF[t]$ is an arbitrary polynomial, $\av=1$ and $\bv=0$, which corresponds to perturbations of the free coefficient of $\cv$. In the case where $\FF$ is a finite field, by the reduction of Section~\ref{subsection single reducton to an algebraic theorem},
%from Theorem~\ref{} to Theorem~\ref{thm single variable - algebraic},
this implies the special case of Theorem~\ref{thm perturbations of a single variable polynomial finit field} which is given in Example~\ref{example single variable - one variable perturbation} of Section~\ref{section the main results}.

\begin{defn}
Let $R,R_1,R_2$ be rings such that $R \subseteq R_1,R \subseteq R_2$. A homomorphism $\Psi: R_1 \rightarrow R_2$ is a \textbf{$R$- homomorphism} if for any $r \in R, \Psi(r)=r$.
\end{defn}

Let $\cv \in \FF[t]$. There exist a unique $\FF[t]$-automorphism $\Psi: \FF[t][x] \rightarrow \FF[t][x]$ such that $x \mapsto x+\cv$. This automorphism is given by $f(t,x) \mapsto f(t,x+\cv(t))$.

\begin{lem}\label{lem single variable large char - rho(f) is separable}
Let $f \in \FF[t][x]$ be a \sqf{} polynomial, and let $\cv \in \FF[t]$. Let $\Psi : \FF[t][x] \rightarrow \FF[t][x]$ be the unique $\FF[t]$-automorphism defined by $x \mapsto x+\cv$. If $p=0$ or $p>\deg_x (f) \deg (\cv)+\deg_t (f)$ then $\Psi(f)$ is separable in $t$.
\end{lem}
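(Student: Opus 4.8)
The plan is to deduce the separability of $\Psi(f)$ in $t$ from the facts already assembled in Section~\ref{section Preliminary facts and results}, using in addition only an elementary bound on $\deg_t\Psi(f)$.

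First I would note that $\Psi$ is an automorphism of the unique factorization domain $\FF[t][x]$, and that automorphisms preserve \sqf{}-ness: if $\Psi(f)=s^2 r$ with $s$ non-invertible, then $f=\Psi^{-1}(s)^2\,\Psi^{-1}(r)$ and $\Psi^{-1}(s)$ is again non-invertible by Lemma~\ref{lem homomorphism preserves invertible elements}, contradicting that $f$ is \sqf{}. Hence $\Psi(f)$ is \sqf{} in $\FF[t][x]$.

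If $p=0$, then Corollary~\ref{cor if f is separable in t it has a factor in ftp}(\ref{part if char=0 then f is separable}) applied to the \sqf{} polynomial $\Psi(f)$ gives at once that $\Psi(f)$ is separable in $t$, and we are done. So suppose $p>0$ and, for contradiction, that $\Psi(f)$ is not separable in $t$. By Corollary~\ref{cor if f is separable in t it has a factor in ftp}(\ref{part part if char>0 then f has a factor in ftp}) there is an irreducible $g\in\FF[t^p][x]$ dividing $\Psi(f)$ in $\FF[t][x]$ with $\deg_t g>0$. Since $g$ is a polynomial in $t^p$ and $x$ that genuinely involves $t$, its $t$-degree is a positive multiple of $p$, so $\deg_t g\geq p$. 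On the other hand, $g\mid\Psi(f)$ forces $\deg_t g\leq\deg_t\Psi(f)$, and expanding $\Psi(f)=f(t,x+\cv(t))$ with $f=\sum_{i=0}^{\deg_x f}a_i(t)x^i$ shows that each summand $a_i(t)(x+\cv(t))^i$ has $t$-degree at most $\deg_t f+i\deg\cv\leq\deg_t f+\deg_x f\cdot\deg\cv$, hence $\deg_t\Psi(f)\leq\deg_t f+\deg_x f\cdot\deg\cv$. Combining the two inequalities yields $p\leq\deg_t f+\deg_x f\cdot\deg\cv$, contradicting the hypothesis $p>\deg_x f\cdot\deg\cv+\deg_t f$. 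Therefore $\Psi(f)$ is separable in $t$.

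I do not expect a genuine obstacle here: all of the real content sits inside Corollary~\ref{cor if f is separable in t it has a factor in ftp} (itself proved in the appendix via Theorem~\ref{thm absolutely square-free equivalent conditions}), and the only points that require a little care are the elementary estimate $\deg_t\Psi(f)\leq\deg_t f+\deg_x f\cdot\deg\cv$ and the observation that a nonzero polynomial lying in $\FF[t^p][x]$ and genuinely involving $t$ has $t$-degree a positive multiple of $p$.
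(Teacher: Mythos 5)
Your proof is correct and follows essentially the same route as the paper's: establish that $\Psi(f)$ is \sqf{}, handle $p=0$ via Corollary~\ref{cor if f is separable in t it has a factor in ftp}, and for $p>0$ derive a contradiction by comparing $\deg_t$ of the irreducible factor in $\FF[t^p][x]$ against the bound $\deg_t\Psi(f)\leq \deg_x f\cdot\deg\cv+\deg_t f<p$. The only (harmless) differences are that you prove \sqf{}-ness by pulling back a hypothetical square factor through $\Psi^{-1}$ rather than by transporting an irreducible factorization, and you make explicit the observation that $\deg_t g\geq p$, which the paper leaves implicit.
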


\begin{proof}
Let $f=\prod_{i=1}^k f_i$ be a factorization of $f$ into irreducible factors. Then by homomorphism properties

\begin{equation} \label{equation single varialble - single var perturb}
\Psi(f) = \prod_{i=1}^k \Psi(f_i).
\end{equation}

Since $\Psi$ is an automorphism, by Lemma~\ref{lem general automorphism preserves irreducibility} $\Psi(f_1),\dots,\Psi(f_k)$ are irreducible. Hence Eq.~\ref{equation single varialble - single var perturb} provides a factorization of $\Psi(f)$ into irreducible factors in $\FF[t][x]$. We now show that $\Psi(f)$ is \sqf{}. Suppose there exist $f_i,f_j$ where $i \not = j$ such that $\Psi(f_i)$ and $\Psi(f_j)$ are associated, that is $\Psi(f_i) = \alpha\Psi(f_j)$ where $\alpha \in \FF$. Since $\Psi(\alpha)=\alpha$ we have $\Psi(f_i)=\Psi(f_j)\Psi(\alpha)=\Psi(f_j\alpha)$. But $\Psi$ is injective, hence $f_i=f_j\alpha$. But that is a contradiction to the assumption that $f$ is \sqf{}. Hence $\Psi(f)$ is \sqf{}. If $p=0$ then we are done, since it follows by Corollary~\ref{cor if f is separable in t it has a factor in ftp} part~\ref{part if char=0 then f is separable} that $\Psi(f)$ is separable in $t$.

Now suppose $p>0$, and suppose on the contrary that $\Psi(f)$ is not separable as polynomial in $t$. By Corollary~\ref{cor if f is separable in t it has a factor in ftp} part~\ref{part part if char>0 then f has a factor in ftp} $\Psi(f)$ has an irreducible factor in $\FF[t^p][x]$, where its degree in $t$ is not zero. Without loss of generality, we can assume this irreducible factor is $\Psi(f_i)$, for some $i$, $1 \leq i \leq k$. But the latter cannot hold since for any $\btwn{i}{1}{k}$
$$
\deg_t (\Psi(f_i)) \leq \deg_t (\Psi(f)) \leq \deg (\cv) \deg_x (f)+\deg_t (f) < p,
$$
where the last inequality is by our assumption on $p$.
\end{proof}

We now prove Theorem~\ref{thm single variable - algebraic} for the special case of a single variable perturbation, where we assume (\ref{part thm sigle variable - albgebraic - single variable perturbation}) of Theorem~\ref{thm single variable - algebraic} holds.

\begin{proof}Let $\Psi: \FF[t][x] \rightarrow \FF[t][x]$ be the unique $\FF[t]$- automorphism which is given by $x \mapsto \cv$. Let $\tilde{f}:=\Psi(f)$. The proof will follow by applying Lemma~\ref{lem main - squarefree values of polynomial separable in t} to $\tilde{f}$. Let $C(N)$ be sufficiently large such that $C(N) \geq \deg(c)\deg_{x}(f)+\deg_t(f)$ holds. Since we assume that (\ref{part thm sigle variable - albgebraic - single variable perturbation}) in Theorem~\ref{thm single variable - algebraic} holds, by Lemma~\ref{lem single variable large char - rho(f) is separable}, $\tilde{f}$ is separable as polynomial in variable $t$. Hence by Lemma~\ref{lem main - squarefree values of polynomial separable in t} there exists a non zero $P \in \FF[x]$ such that
$$
\{\beta \in \FF: \tilde{f}(t,\beta) \mbox{ is not separable}\} \subseteq \{\beta \in \FF: P(\beta)=0\}.
$$
But
$$
\{\beta \in \FF: \tilde{f}(t,\beta) \mbox{ is not separable}\}=\{\beta \in \FF: f(t,\beta+\cv(t)) \mbox{ is not separable}\}
$$
Also,
$$
\deg_t \tilde{f} \leq \deg_x f \deg \cv + \deg_t f
$$
and
$$
\deg_x \tilde{f} = \deg_x f.
$$
Assigning this into the bound which is given by Eq.~\ref{equation single variable main lemma pol bound} in Lemma~\ref{lem main - squarefree values of polynomial separable in t} we get
$$
\deg P \leq (2(\deg_x f \deg \cv + \deg_t f)-1)\deg_x f.
$$
Let $\tilde{C}(N)$ be such that $\tilde{C}(N) \geq (2(\deg_x f\deg c + \deg_t f)-1)\deg_x f$, then the bound on $\deg P$ holds as required.
\end{proof}

\subsection{Proof of Theorem~\ref{thm single variable - algebraic}}\label{subsection Proof of Theorem thm single variable - algebraic}
The proof of Theorem~\ref{thm single variable - algebraic} follows similar lines as the proof of a special case of the same theorem in the previous section, only that now we consider more homomorphisms of $\FF[t][x]$ other than the one which maps $x$ to $x+\cv$ where $\cv \in \FF[t]$. Also, in Lemma~\ref{lem single variable large char - rho(f) is separable} and its proof we assumed $p$ is sufficiently large. Lemma~\ref{lem single variable low char - image of f is separable in t} which we prove in this section generalizes Lemma~\ref{lem single variable large char - rho(f) is separable} and its proof to the case of an arbitrary characteristic $p$.

Let $R$ be a ring. Every $R$- homomorphism $R[x_1] \rightarrow R[x_1,x_2]$ is uniquely determined by the image of $x_1$. Conversely, for any $f \in R[x_1,x_2]$ there is an $R$- homomorphism $R[x_1] \rightarrow R[x_1,x_2]$ such that $x_1 \mapsto f$. This homomorphism is given by $g \mapsto g \circ f$ for any $g \in R[x_1]$. We denote by $\Psi_f$ the unique $R$-homomorphism $\Psi_f: R[x_1] \rightarrow R[x_1,x_2]$ such that $x_1 \mapsto f$.

In the following lemma we assume $D$ is an integral domain. For our purposes, we need only the case where $D=\FF[t]$.
\begin{lem}\label{lem single variable large char - f is irr then so is rho(f)}
Let $D$ be an integral domain. Let $\av,\bv,\cv \in D$, such that $\av \not = 0$ and $l \in D[x_1,x_2],l(x_1,x_2):=\av x_1+\bv x_2+\cv$. Let $\Psi_l: D[x_1] \rightarrow D[x_1,x_2]$ be a $D$- homomorphism defined by $x_1 \mapsto l$.
\begin{enumerate}
\item \label{single variable large char - can be extended} Let $K$ be the field of fractions of $D$. $\Psi_l$ can be extended to an automorphism $\tilde{\Psi}: K[x_1,x_2] \rightarrow K[x_1,x_2]$. In particular, $\Psi_l$ is injective.
\item \label{single variable large char - is irreducible in kx} If $D$ is a field, and if $f$ is irreducible in $D[x_1]$ then $\Psi_l(f)$ is irreducible in $D[x_1,x_2]$.
\item \label{single variable large char - is primitive} If $D$ is a unique factorization domain, $\gcd(\av,\bv)=1$ and $f$ is primitive in $D[x_1]$ then the greatest common divisor of the coefficients of $\Psi_l(f)$ in $D$ is $1$.
\item \label{single variable large char - is irreducible} If $D$ is a unique factorization domain, $\gcd(\av,\bv)=1$ and $f$ is irreducible in $D[x_1]$ then $\Psi_l(f)$ is irreducible in $D[x_1,x_2]$.
\end{enumerate}
\end{lem}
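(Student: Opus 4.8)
The plan is to treat the four parts in order, deriving the harder ones from the easier. For part~\ref{single variable large char - can be extended} I would realise $\Psi_l$ as the restriction of an affine change of coordinates. Define the $K$-algebra homomorphism $\tilde{\Psi}\colon K[x_1,x_2]\to K[x_1,x_2]$ by $x_1\mapsto \av x_1+\bv x_2+\cv$ and $x_2\mapsto x_2$. Since $\av\neq 0$ is a unit of $K$, the $K$-algebra homomorphism $x_1\mapsto \av^{-1}(x_1-\bv x_2-\cv)$, $x_2\mapsto x_2$ is a two-sided inverse of $\tilde{\Psi}$ on the generators, so $\tilde{\Psi}$ is an automorphism. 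As $\tilde{\Psi}$ fixes $D$ and sends $x_1$ to $l$, its restriction to $D[x_1]$ is $g\mapsto g(l)=\Psi_l(g)$, so $\tilde{\Psi}$ extends $\Psi_l$, and injectivity of $\Psi_l$ follows from that of $\tilde{\Psi}$.

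For parts~\ref{single variable large char - is irreducible in kx} and~\ref{single variable large char - is irreducible} the key elementary fact I would use is that a polynomial of $x_2$-degree $0$ which is irreducible in $K[x_1]$ is also irreducible in $K[x_1,x_2]$, since any factorization there forces both factors to lie in $K[x_1]$. When $D=K$ is a field (part~\ref{single variable large char - is irreducible in kx}) this makes $f$ irreducible in $K[x_1,x_2]$, and since $\tilde{\Psi}$ is an automorphism of $K[x_1,x_2]$ with $\tilde{\Psi}(f)=\Psi_l(f)$, Lemma~\ref{lem general automorphism preserves irreducibility} finishes the proof. For part~\ref{single variable large char - is irreducible} I would first dispose of the case $f\in D$ (then $\Psi_l(f)=f$, an irreducible element of the UFD $D$, which remains irreducible in $D[x_1,x_2]$) and otherwise assume $\deg_{x_1}f>0$; then Gauss's lemma makes $f$ irreducible in $K[x_1]$, hence in $K[x_1,x_2]$, hence $\Psi_l(f)=\tilde{\Psi}(f)$ is irreducible in $K[x_1,x_2]$ by Lemma~\ref{lem general automorphism preserves irreducibility}. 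To descend to $D[x_1,x_2]$ I would take a factorization $\Psi_l(f)=gh$ over $D[x_1,x_2]$; one factor, say $h$, is a unit of $K[x_1,x_2]$, so $h\in K^{\times}\cap D[x_1,x_2]=D\setminus\{0\}$, whence $h$ divides the gcd of the coefficients of $\Psi_l(f)$, which is a unit by part~\ref{single variable large char - is primitive}; thus $h$ is a unit of $D[x_1,x_2]$ and $\Psi_l(f)$ is irreducible there.

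For part~\ref{single variable large char - is primitive} I would show that no prime $\pi$ of the UFD $D$ divides every coefficient of $\Psi_l(f)$. Reducing modulo $\pi$ into the integral domain $\bar D:=D/(\pi)$ gives $\overline{\Psi_l(f)}=\bar f(\bar\av x_1+\bar\bv x_2+\bar\cv)$. Since $\gcd(\av,\bv)=1$, the prime $\pi$ cannot divide both $\av$ and $\bv$, so $\bar\av x_1+\bar\bv x_2+\bar\cv$ has total degree exactly $1$; and primitivity of $f$ gives $\bar f\neq 0$. Substituting a degree-one polynomial into the nonzero polynomial $\bar f$ over the domain $\bar D$ yields a nonzero polynomial (the top-degree term survives), so $\overline{\Psi_l(f)}\neq 0$ and $\pi$ does not divide all coefficients. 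As $\pi$ was arbitrary, the gcd of the coefficients of $\Psi_l(f)$ is a unit, which is the claim.

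I do not expect a genuine obstacle; the only points needing care are the descent step in part~\ref{single variable large char - is irreducible}, where one must combine irreducibility over $K$ with the content computation of part~\ref{single variable large char - is primitive} and treat constant $f$ separately, and, in part~\ref{single variable large char - is primitive}, confirming that substituting a degree-one polynomial into a nonzero polynomial over an integral domain leaves it nonzero.
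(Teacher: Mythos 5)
Your proposal is correct and follows essentially the same route as the paper: extend $\Psi_l$ to an affine automorphism of $K[x_1,x_2]$, deduce irreducibility over the field case from the fact that a factorization of a polynomial of $x_2$-degree $0$ stays in $K[x_1]$, prove primitivity by reduction modulo each prime of $D$ using $\gcd(\av,\bv)=1$, and combine these via Gauss's lemma for the UFD case. The only (harmless) deviations are that in part~\ref{single variable large char - is primitive} you justify $\overline{\Psi_l(f)}\neq 0$ by a leading-form degree count rather than by injectivity of the reduced substitution map, and that you explicitly dispose of the case $f\in D$ in part~\ref{single variable large char - is irreducible}, which the paper's appeal to Gauss's lemma tacitly assumes away.
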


\begin{proof}
\ref{single variable large char - can be extended}: $\Psi_{l}$ can be extended to a $K$- homomorphism $\tilde{\Psi}: K[x_1,x_2] \rightarrow K[x_1,x_2]$ by the following rule
$$x_1 \mapsto \av x_1+\bv x_2+\cv,$$
$$x_2 \mapsto x_2.$$
$\tilde{\Psi}$ is an automorphism, with an inverse homomorphism $\tilde{\Psi}^{-1}: K[x_1,x_2] \rightarrow K[x_1,x_2]$ which is given by
$$
%x_1 \mapsto x_1-ax_2-\cv
x_1 \mapsto \frac{1}{\av}(x_1-\bv x_2-\cv)
$$
$$
x_2 \mapsto x_2.
$$

Hence the claim follows.

\ref{single variable large char - is irreducible in kx}: It follows from the assumption that $f$ is irreducible in $D[x_1]$, that $f$ is irreducible also  in $D[x_1,x_2]$. This is because for every factorization of $f$, $f=g_1g_2$ where $g_1,g_2 \in D[x_1,x_2]$, $\deg_{x_2}g_1=\deg_{x_2}g_2=\deg_{x_2}f=0$. Hence $g_1,g_2 \in D[x_1]$. By part~\ref{single variable large char - can be extended} of the lemma, since $D=K$ in this case, $\Psi_l$ can be extended to an automorphism $\tilde{\Psi}$ of $D[x_1,x_2]$. Since we assume $f$ is irreducible in $D[x_1,x_2]$, by Lemma~\ref{lem general automorphism preserves irreducibility} $\tilde{\Psi}(f)$ is irreducible in $D[x_1,x_2]$. But $\Psi_l(f)=\tilde{\Psi}(f)$. Hence $\Psi_l(f)$ is irreducible in $D[x_1,x_2]$.

\ref{single variable large char - is primitive}: Let $h$ be a prime element in $D$. Then $\langle h \rangle$ is a prime ideal. Hence $D/\langle h \rangle$ is an integral domain. Let $\overline{D}=D/\langle h \rangle$.

We now define few notations. For an element $c \in D$ denote by $\overline{c}$ the equivalence class of $c$ in $\overline{D}$. For a polynomial $g \in \FF[x_1]$, $g=\sum_{i=0}^{n}c_ix_1^i$ denote $\overline{g}=\sum_{i=0}^{n}\overline{c_i}x_1^i$. Likewise, for a polynomial $g \in \FF[x_1,x_2]$, $g=\sum_{0 \leq i\leq n_1,0 \leq j \leq n_2}c_{i,j}x_1^ix_2^j$ denote $\overline{g}=\sum_{0 \leq i \leq n_1,0 \leq j \leq n_2}\overline{c_{i,j}}x_1^ix_2^j$.

Let $\Psi_{\overline{l}}: \overline{D}[x_1] \rightarrow \overline{D}[x_1,x_2]$ be a $\overline{D}$- homomorphism defined by $x \mapsto \bar{l}$. Since $\gcd(\av,\bv)=1$, at least one of $\overline{\av},\overline{\bv}$ is nonzero. Hence by part~\ref{single variable large char - can be extended} $\overline{D}$ can be extended to an automorphism $\overline{K}[x_1,x_2] \rightarrow \overline{K}[x_1,x_2]$ where $\overline{K}$ is the field of fractions of $\overline{D}$. In particular, the kernel of $\Psi_{\overline{l}}$ is trivial. Since $f$ is primitive, hence $\overline{f} \not = 0 \mod \langle h \rangle$, it follows that $\Psi_{\overline{l}}(\bar{f}) \not = 0 \mod \langle h \rangle$. But $\overline{\Psi_l(f)}=\Psi_{\overline{l}}(\overline{f})$ by homomorphism properties. Hence $h$ does not divide all coefficients of $\Psi_l (f)$. Since $h$ is arbitrary the claim follows.

\ref{single variable large char - is irreducible}: Let $K$ be the field of fractions of $D$. Since $f$ is irreducible in $D[x_1]$, by Gauss's lemma for polynomials it is irreducible in $K[x_1]$ and primitive. By part~\ref{single variable large char - is irreducible in kx} applied on $f$ as an element in $K[x_1]$, $\Psi_l(f)$ is irreducible in $K[x_1,x_2]$. Since $f$ is primitive, by part~\ref{single variable large char - is primitive} the coefficients of $\Psi_l(f)$ in $D$ do not have a nontrivial common divisor. Since $\Psi_l(f)$ is irreducible in $K[x_1,x_2]$ and does not have a nontrivial factor in $D$, it is irreducible in $D[x_1,x_2]$.
\end{proof}

Although in the main theorem $\av,\bv,\cv$ and $f$ are assumed to be polynomials, for the following lemma we assume $\av,\bv,\cv$ and $f$ are rational functions and not necessarily polynomials, i.e. $\av,\bv,\cv \in \FF(t)$ and $f \in \FF(t,x_1)$. We will also perform formal derivations of functions in $\FF(t,x_1,x_2)$. The meaning of that is described in Section~\ref{subsubsection derivation rings}.

\begin{lem}\label{lem single variable low char - separable displacments}
Let $\FF$ be a field of positive characteristic $p$. Let $f \in \FF(t,x_1)$. Let $\av,\bv,\cv \in \FF(t)$ such that $\av \not = 0$ and $\frac{\bv}{\av} \not \in \FF(t^p)$. Suppose $f(t,\av(t)x_1+\bv(t)x_2+\cv(t)) \in \FF(t^p,x_1,x_2)$. Then $f \in \FF(t^p,x_1^p)$.
\end{lem}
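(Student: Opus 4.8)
The plan is to convert the statement into the vanishing of two formal partial derivatives. I will use the elementary fact that over a field $\FF$ of characteristic $p>0$, a rational function $h\in\FF(y_1,\dots,y_n)$ satisfies $\partial h/\partial y_i=0$ if and only if $h\in\FF(y_1,\dots,y_{i-1},y_i^p,y_{i+1},\dots,y_n)$: writing $h=P/Q$ in lowest terms in $\FF[y_1,\dots,y_n]$, the vanishing of $\partial h/\partial y_i$ forces $Q\mid\partial Q/\partial y_i$, hence $\partial Q/\partial y_i=0$ by comparison of $y_i$-degrees, hence $\partial P/\partial y_i=0$, and a polynomial killed by $\partial/\partial y_i$ involves $y_i$ only through $y_i^p$ (this needs no perfectness of $\FF$). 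Consequently the hypothesis $f(t,\av x_1+\bv x_2+\cv)\in\FF(t^p,x_1,x_2)$ is equivalent to $\partial_t\big(f(t,\av x_1+\bv x_2+\cv)\big)=0$; the conclusion $f\in\FF(t^p,x_1^p)$ is equivalent to $\partial f/\partial t=\partial f/\partial x_1=0$; and the hypothesis $\bv/\av\notin\FF(t^p)$ is equivalent to $(\bv/\av)'\neq 0$.

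Set $l:=\av(t)x_1+\bv(t)x_2+\cv(t)$. By part~(\ref{single variable large char - can be extended}) of Lemma~\ref{lem single variable large char - f is irr then so is rho(f)}, applied with $D=\FF(t)$, the assignment $x_1\mapsto l,\ x_2\mapsto x_2$ extends to an $\FF(t)$-automorphism $\tilde\Psi$ of $\FF(t,x_1,x_2)$; hence $\FF(t,x_1,x_2)=\FF(t,l,x_2)$ with $t,l,x_2$ algebraically independent over $\FF$, and $\tilde\Psi$ is injective on $\FF(t,x_1)$. Writing $g:=\tilde\Psi(f)=f(t,l)$ and applying the chain rule (over the coefficient field $\FF(x_1,x_2)$, with $t$ as the variable) to the derivation $\partial/\partial t$ gives
$$
0=\partial_t g=(\partial_t f)(t,l)+(\partial_{x_1}f)(t,l)\cdot\partial_t l,\qquad \partial_t l=\av'x_1+\bv'x_2+\cv'.
$$
Since $\av\neq 0$ I can eliminate $x_1$ via $x_1=(l-\bv x_2-\cv)/\av$; a short computation using $\bv'-\av'\bv/\av=\av(\bv/\av)'$ turns this into
$$
\partial_t l=\tfrac{\av'}{\av}(l-\cv)+\cv'+\av\Big(\tfrac{\bv}{\av}\Big)'x_2 .
$$

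Because $(\partial_t f)(t,l)$, $(\partial_{x_1}f)(t,l)$ and $\tfrac{\av'}{\av}(l-\cv)+\cv'$ all lie in $\FF(t,l)$, which contains no $x_2$, the displayed identity for $\partial_t g$ is an equation of degree at most $1$ in $x_2$ over the field $\FF(t,l)$, so both of its $x_2$-coefficients vanish. The coefficient of $x_2$ is $(\partial_{x_1}f)(t,l)\cdot\av\cdot(\bv/\av)'$; since $\av\neq 0$ and $(\bv/\av)'\neq 0$ this gives $(\partial_{x_1}f)(t,l)=\tilde\Psi(\partial f/\partial x_1)=0$, whence $\partial f/\partial x_1=0$ by injectivity of $\tilde\Psi$. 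Feeding this back, the $x_2$-free coefficient collapses to $(\partial_t f)(t,l)=0$, so $\partial f/\partial t=0$ as well. Applying the criterion of the first paragraph in each variable, $\partial f/\partial t=\partial f/\partial x_1=0$ yields $f\in\FF(t^p,x_1^p)$, as claimed.

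The routine parts are the chain-rule computation and the algebraic identity rewriting $\partial_t l$. The one step that genuinely uses the hypotheses is the vanishing of the coefficient of $x_2$: it is exactly here that $\av\neq 0$ together with $\bv/\av\notin\FF(t^p)$ are needed, so as to force $(\partial_{x_1}f)(t,l)=0$ rather than merely a linear relation between the two partials of $f$. The point I expect to need the most care is keeping the change of coordinates $(t,x_1,x_2)\leftrightarrow(t,l,x_2)$ honest — in particular that $\FF(t,l)$ contains no $x_2$, so that comparing coefficients of $x_2$ is legitimate — and checking that the ``derivative equals zero'' criterion holds without assuming $\FF$ perfect.
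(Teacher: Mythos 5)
Your proof is correct and follows essentially the same route as the paper: apply the chain rule to $\partial_t$ of the composite, observe the result is linear in $x_2$ over a field not containing $x_2$, and equate coefficients to force $\partial f/\partial x_1=0$ (using $(\bv/\av)'\neq 0$) and then $\partial f/\partial t=0$. The only differences are organizational — you work directly in the coordinates $(t,l,x_2)$ instead of first reducing to $\av=1,\cv=0$ and changing variables back as the paper does, and you supply a proof of the criterion ``$\partial h/\partial y_i=0$ iff $h$ is a rational function of $y_i^p$'' which the paper merely cites as a known property of derivations.
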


%In the proof of Lemma~\ref{lem single variable low char - separable %displacments} we use the fact that $f \in \FF(t^p)$ if and only if $\ordinaryd{f}%{t} = 0$. For a proof of this see Lemma~\ref{lem quotient %derivative vanish} in Section~\ref{subsubsection derivation rings}

In the proof of Lemma~\ref{lem single variable low char - separable displacments} we use the fact that $f \in \FF(t^p)$ if and only if $\ordinaryd{f}{t} = 0$, which is a known property of derivations.

\begin{proof}
First we note that it is sufficient to prove the lemma in the case where $\av=1$ and $\cv=0$. This is because if we define $\overline{f}(t,x_1):=f(t,\av(t) x_1+\cv(t))$, and define $\overline{\av}:=1,\overline{\bv}:=\frac{\bv}{\av},\overline{\cv}:=0$, then $f(t,\av(t)x_1+\bv(t)x_2+\cv(t))=\overline{f}(t,x_1+\frac{\bv}{\av}(t)x_2)
=\overline{f}(t,\overline{\av}(t)x_1+\overline{\bv}(t)x_2+\overline{\cv}(t))$. Proving the lemma for the special case will show that if $\overline{f}(t,x_1+\frac{\bv}{\av}(t)x_2) \in \FF(t^p,x_1^p)$ then $\overline{f} \in \FF(t^p,x_1^p)$, i.e. $\overline{f}=g(t^p,x_1^p)$ for some $g \in \FF(t,x_1)$. But then
\begin{multline}f\left (t,x_1\right )=\overline{f}\left (t,\frac{x_1-\cv(t)}{\av(t)}\right )=\\g\left (t^p, \left  (\frac{x_1-\cv(t)}{\av(t)}\right )^p\right )=g\left (t^p, \frac{x_1^p-\cv(t)^p}{\av(t)^p}\right ) \in \FF(t^p,x_1^p).
\end{multline}
Hence it is sufficient to prove the case where $f=\overline{f},\av=\overline{\av},\bv=\overline{\bv},\cv=\overline{\cv}$. From now on we assume $\av=1$ and $\cv=0$.

Now, let $\tilde{f}:=f(t,x_1+\bv(t) x_2)$. Suppose $\tilde{f} \in \FF(t^p,x_1,x_2)$. Then by the chain rule
$$
0 = \frac{\partial \tilde{f}}{\partial t}(t,x_1,x_2)=\frac{\partial f}{\partial x_1}(t,x_1+\bv(t)x_2)\bv'(t)x_2+\frac{\partial f}{\partial t}(t,x_1+\bv(t)x_2).
$$

By a change of variables $x_1=x_1-\bv(t)x_2$ we get:

$$
0 = \frac{\partial f}{\partial x_1}(t,x_1)\bv'(t)x_2+\frac{\partial f}{\partial t}(t,x_1).
$$
We view the above as a polynomial in variable $x_2$ over $\FF(t,x_1)$. By equating the coefficients of this polynomial to $0$ we get the following two equations
\begin{equation}\label{equation single condition by equating free coeficient}
\frac{\partial f}{\partial t}(t,x_1)=0
\end{equation}
\begin{equation}\label{equation single condition by equating linear coeficient}
\frac{\partial f}{\partial x_1}(t,x_1)\bv'(t)=0
\end{equation}

Since $\bv'(t) \not = 0$ by our assumption that $\frac{\bv}{\av}=\bv \not \in \FF(t^p)$, Eq.~\ref{equation single condition by equating linear coeficient} holds if and only if $\frac{\partial f}{\partial x_1} = 0$. Hence the two equations, Eq.~\ref{equation single condition by equating free coeficient} and Eq.~\ref{equation single condition by equating linear coeficient}, hold if and only if both derivatives by $x_1$ and by $t$ vanish. Equivalently, $f \in \FF(t^p,x_1^p)$.
\end{proof}

\begin{lem} \label{lem single variable low char - image of f is separable in t}
Let $f \in \FF[t][x_1]$ be \sqf{} in $\overline{\FF}[t][x_1]$. Let $\av,\bv,\cv \in \FF[t]$ such that $\gcd(\av,\bv)=1$. Let $l \in \FF[t][x_1,x_2]$ defined by $l(t,x_1,x_2):=\av(t)x_1+\bv(t)x_2+\cv(t)$. Let $\Psi_l : \FF[t][x_1] \rightarrow \FF[t][x_1,x_2]$ be the $\FF[t]$-homomorphism defined by $x_1 \mapsto l$. If at least one of the following holds then $\Psi_l(f)$ is separable in $t$.
\begin{enumerate}
\item \label{item single variable theorem - large char}$p=0$ or $p > \|(\av,\bv,\cv)\|\deg_{x_1} f+\deg_t f$.
\item \label{item single variable theorem - arbitrary char}$\frac{\bv}{\av} \not \in \FF(t^p)$ where $\av \not = 0$.
\end{enumerate}
\end{lem}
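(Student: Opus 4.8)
The plan is to adapt the argument of Lemma~\ref{lem single variable large char - rho(f) is separable}, replacing the automorphism $x\mapsto x+\cv$ by the homomorphism $\Psi_l$, and to supply as new input Lemma~\ref{lem single variable large char - f is irr then so is rho(f)} (control of $\Psi_l$ on irreducible and primitive polynomials) together with Lemma~\ref{lem single variable low char - separable displacments} for the positive-characteristic case. First I would reduce to $\av\neq0$: if $\av=0$ then $\gcd(\av,\bv)=1$ forces $\bv\in\FF^{\times}$, and interchanging $x_1$ and $x_2$ — which does not affect separability in $t$ — lands us in the case of nonzero leading coefficient (and in case~\ref{item single variable theorem - arbitrary char} the hypothesis already requires $\av\neq0$); the case $f$ constant is trivial. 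The first substantive step is to show $\Psi_l(f)$ is square-free, both in $\overline{\FF}[t][x_1,x_2]$ and in $\FF[t][x_1,x_2]$: factor $f$ into pairwise non-associate irreducibles over the relevant ring (possible since $f$ is square-free there), observe that $\gcd(\av,\bv)=1$ persists over $\overline{\FF}$, apply Lemma~\ref{lem single variable large char - f is irr then so is rho(f)} part~\ref{single variable large char - is irreducible} to see that each $\Psi_l$-image is irreducible, and use injectivity of $\Psi_l$ (Lemma~\ref{lem single variable large char - f is irr then so is rho(f)} part~\ref{single variable large char - can be extended}) to see that the images remain pairwise non-associate. If $p=0$ we are then done by Corollary~\ref{cor if f is separable in t it has a factor in ftp} part~\ref{part if char=0 then f is separable}.

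I would then assume $p>0$ and, seeking a contradiction, that $\Psi_l(f)$ is not separable in $t$. Since $\Psi_l(f)$ is square-free, Corollary~\ref{cor if f is separable in t it has a factor in ftp} part~\ref{part part if char>0 then f has a factor in ftp} provides $g\in\FF[t^p][x_1,x_2]$ with $\deg_t g>0$ dividing $\Psi_l(f)$. Under case~\ref{item single variable theorem - large char} this is immediately impossible by a degree count: expanding $\Psi_l(f)=f(t,\av x_1+\bv x_2+\cv)$ coefficientwise in $x_1$ gives $\deg_t\Psi_l(f)\le\deg_t f+\|(\av,\bv,\cv)\|\deg_{x_1} f<p$, hence $\deg_t g\le\deg_t\Psi_l(f)<p$, whereas $g\in\FF[t^p][x_1,x_2]$ with positive $t$-degree has $\deg_t g$ a positive multiple of $p$.

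Under case~\ref{item single variable theorem - arbitrary char} I would instead use the automorphism $\tilde{\Psi}$ of $\FF(t)[x_1,x_2]$ extending $\Psi_l$ (Lemma~\ref{lem single variable large char - f is irr then so is rho(f)} part~\ref{single variable large char - can be extended}). Put $h:=\tilde{\Psi}^{-1}(g)$. Then $h$ divides $\tilde{\Psi}^{-1}(\Psi_l(f))=f$ in $\FF(t)[x_1,x_2]$, and since $f$ does not involve $x_2$ we get $h\in\FF(t)[x_1]$ with $h\mid f$ there; moreover $\tilde{\Psi}(h)=h(t,\av x_1+\bv x_2+\cv)=g\in\FF[t^p][x_1,x_2]\subseteq\FF(t^p,x_1,x_2)$, so Lemma~\ref{lem single variable low char - separable displacments} applied to $h$ (here $\av\neq0$ and $\bv/\av\notin\FF(t^p)$) yields $h\in\FF(t^p,x_1^p)$. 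It then remains to convert $h\in\FF(t^p,x_1^p)$ and $h\mid f$ in $\FF(t)[x_1]$ into a contradiction with $f$ being square-free in $\overline{\FF}[t][x_1]$: realizing $\FF(t)[x_1]$ as a free module over $\FF(t^p)[x_1^p]$ forces $h\in\FF(t^p)[x_1^p]$, and clearing a denominator lying in $\FF[t^p]$ produces $\hat h\in\FF[t^p,x_1^p]$ with $\hat h\mid f$ in $\FF(t)[x_1]$; if $\deg_{x_1}\hat h>0$ then, $\overline{\FF}$ being perfect, $\hat h=r^p$ in $\overline{\FF}[t,x_1]$ with $r$ nonconstant in $x_1$, so $r^2\mid f$ in $\overline{\FF}(t)[x_1]$ and hence, by Gauss's lemma over the UFD $\overline{\FF}[t]$, $f$ is not square-free in $\overline{\FF}[t][x_1]$.

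I expect the last paragraph — finishing case~\ref{item single variable theorem - arbitrary char} — to be the main obstacle, in particular its degenerate subcase $\deg_{x_1}\hat h=0$. There $h\in\FF(t)$, so $g=\tilde{\Psi}(h)=h\in\FF[t^p]$ is a nonconstant polynomial in $t^p$ dividing $\Psi_l(f)$; over $\overline{\FF}$ it is again a nontrivial $p$-th power, contradicting the square-freeness of $\Psi_l(f)$ established earlier (alternatively, Lemma~\ref{lem single variable large char - f is irr then so is rho(f)} part~\ref{single variable large char - is primitive} shows that $\Psi_l$ of the primitive part of $f$ is primitive, so $g$ divides the content of $f$ and one again contradicts square-freeness of $f$). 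It is because $g$ is only guaranteed irreducible in $\FF[t^p][x_1,x_2]$, not in $\FF[t][x_1,x_2]$, that I route case~\ref{item single variable theorem - arbitrary char} through $\tilde{\Psi}^{-1}$ and Lemma~\ref{lem single variable low char - separable displacments} rather than attempting to match $g$ with one of the irreducible factors $\Psi_l(h_j)$ directly.
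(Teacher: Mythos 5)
Your proposal is correct and follows the same skeleton as the paper's proof: establish that $\Psi_l(f)$ is square-free using Lemma~\ref{lem single variable large char - f is irr then so is rho(f)} and injectivity of $\Psi_l$, invoke Corollary~\ref{cor if f is separable in t it has a factor in ftp} to produce a factor in $\FF[t^p][x_1,x_2]$ of positive $t$-degree, kill it by a degree count in case~(\ref{item single variable theorem - large char}), and feed Lemma~\ref{lem single variable low char - separable displacments} in case~(\ref{item single variable theorem - arbitrary char}). The one place you genuinely diverge is the endgame of case~(\ref{item single variable theorem - arbitrary char}). The paper simply notes that, since $\prod_i\Psi_l(f_i)$ is already a factorization of $\Psi_l(f)$ into irreducibles of $\FF[t][x_1,x_2]$, the offending factor may be taken (up to associates) to be some $\Psi_l(f_i)$; Lemma~\ref{lem single variable low char - separable displacments} then gives $f_i\in\FF[t^p,x_1^p]$, and Theorem~\ref{thm absolutely square-free equivalent conditions} immediately contradicts square-freeness of $f$ over $\overline{\FF}$. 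Your worry that the factor is "only irreducible in $\FF[t^p][x_1,x_2]$" is unnecessary: an irreducible element of $\FF[t][x_1,x_2]$ (or of $\FF(x_1,x_2)[t]$, depending on how one reads the corollary) dividing $\Psi_l(f)$ must be associate to one of the $\Psi_l(f_i)$, so the paper's shortcut is legitimate. Your detour through $\tilde{\Psi}^{-1}$ — pulling $g$ back to $h\mid f$ in $\FF(t)[x_1]$, locating $h$ in $\FF[t^p,x_1^p]$ via the basis $\{t^ix_1^j\}_{0\le i,j<p}$ of $\FF(t)(x_1)$ over $\FF(t^p)(x_1^p)$, and extracting a $p$-th root over the perfect field $\overline{\FF}$ — is also valid, including the degenerate subcase $h\in\FF(t)$, which you correctly dispose of against the square-freeness of $\Psi_l(f)$ over $\overline{\FF}$ (and your check that $\gcd(\av,\bv)=1$ persists over $\overline{\FF}$ is what makes that available). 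The cost is that you re-derive by hand, via explicit $p$-th powers, what Theorem~\ref{thm absolutely square-free equivalent conditions} packages for you; the benefit is that your argument does not rely on matching the bad factor with the factorization $\prod_i\Psi_l(f_i)$, only on the automorphism $\tilde{\Psi}$ of $\FF(t)[x_1,x_2]$.
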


\begin{proof}
Let $f=\prod_{i=1}^k f_i$ be a factorization of $f$ into irreducible factors. Then by homomorphism properties
\begin{equation}\label{equation single varialble low cahr - factorizatoin of rho(f)}
\Psi_l(f) = \prod_{i=1}^k \Psi_{l}(f_i).
\end{equation}

By Lemma~\ref{lem single variable large char - f is irr then so is rho(f)} part~\ref{single variable large char - is irreducible} Eq.~\ref{equation single varialble low cahr - factorizatoin of rho(f)} provides a factorization of $\Psi_l(f)$ into irreducible factors in $\FF[t][x_1,x_2]$. We now show that $\Psi_l(f)$ is \sqf{}. Suppose $\Psi_l(f_i)=\Psi_l(f_j)\alpha$ where $i \not = j$ and $\alpha \in \FF$. Since $\Psi_l(\alpha)=\alpha$ we have $\Psi_l(f_i)=\Psi_l(f_j)\Psi_l(\alpha)=\Psi_l(f_j\alpha)$. Since $\Psi_l$ is injective by Lemma~\ref{lem single variable large char - f is irr then so is rho(f)} part~\ref{single variable large char - can be extended}, we conclude that $f_i=f_j\alpha$. But that is a contradiction to the assumption that $f$ is \sqf{}. Hence $\Psi_l(f)$ is \sqf{}. If $p=0$ then we are done, since by Corollary~\ref{cor if f is separable in t it has a factor in ftp} part~\ref{part if char=0 then f is separable} $\Psi_l(f)$ being \sqf{} implies that $\Psi_l(f)$ is separable in $t$.

Now suppose $p>0$, and suppose on the contrary that $\Psi_l(f)$ is not separable as polynomial in $t$. Then by Corollary~\ref{cor if f is separable in t it has a factor in ftp} part~\ref{part part if char>0 then f has a factor in ftp} $\Psi_l(f)$ has an irreducible factor in $\FF[t^p][x_1,x_2]$, where its degree in $t$ is not zero. Thus we can assume without loss of generality that for some $\btwn{i}{1}{k}$

\begin{equation} \label{equation single variable - f_i in ftp}
\Psi_l(f_i) \in \FF[t^p][x_1,x_2],\mbox{ where } \deg_t \Psi_l(f_i) > 0.
\end{equation}

We will now show that Eq.~\ref{equation single variable - f_i in ftp} cannot hold. The proof splits here, depending on which of (\ref{item single variable theorem - large char}) or (\ref{item single variable theorem - arbitrary char}) of the lemma holds.

Suppose (\ref{item single variable theorem - large char}) holds. Then Eq.~\ref{equation single variable - f_i in ftp} cannot hold since for any $\btwn{i}{1}{k}$
$$
\deg_t (\Psi_l(f_i)) \leq \deg_t (\Psi_l(f)) \leq \|(\av,\bv,\cv)\| \deg_{x_1} f+\deg_t f < p
$$
where the last inequality is by our assumption on $p$.

Now suppose (\ref{item single variable theorem - arbitrary char}) holds. Since $\frac{\bv}{\av} \not \in \FF(t^p)$, by Lemma~\ref{lem single variable low char - separable displacments} $f_i \in \FF[t^p,x_1^p]$. Now by the equivalent conditions in Theorem~\ref{thm absolutely square-free equivalent conditions} $f$ cannot be \sqf{} in $\overline{\FF}[t,x_1]$. This is a contradiction to our assumptions which shows that Eq.~\ref{equation single variable - f_i in ftp} does not hold. Hence $\Psi_l(f)$ is separable in $t$ as was to show.
\end{proof}

We now prove Theorem~\ref{thm single variable - algebraic}.

\begin{proof}
Let $\tilde{f}:=\Psi_l(f)$. We prove this by applying Lemma~\ref{lem main - squarefree values of polynomial separable in t} to $\tilde{f}$. By Lemma~\ref{lem main - squarefree values of polynomial separable in t} there exists a polynomial $P \in \FF[x_1,x_2]$ such that
\begin{multline}\label{equation proof polynomial property 1}
\{(\beta_1,\beta_2) \in \FF^2: \tilde{f}(t,\beta_1,\beta_2) \mbox{ is not separable}\} \subseteq \\\{(\beta_1,\beta_2) \in \FF^2: P(\beta_1,\beta_2)=0\}.
\end{multline}
But
\begin{equation}\label{equation proof polynomial property 2}
\tilde{f}(t,\beta_1,\beta_2) = f(t,\av(t)\beta_1+\bv(t)\beta_2+\cv(t)).
\end{equation}
From Eq.~\ref{equation proof polynomial property 1} and Eq.~\ref{equation proof polynomial property 2} it follows that Eq.~\ref{equation polynomial P property} holds as required.

Let $C(N)$ be sufficiently large such that $C(N) \geq \|(\av,\bv,\cv)\|\deg_{x} f+\deg_t f$ holds. Assume at least one of (\ref{part thm sigle variable - albgebraic - single variable perturbation}) and (\ref{part thm sigle variable - albgebraic - two variables perturbation}) in Theorem~\ref{thm single variable - algebraic} holds. It follows that at least one of (\ref{item single variable theorem - large char}) and (\ref{item single variable theorem - arbitrary char}) in Lemma~\ref{lem single variable low char - image of f is separable in t} holds. Thus by Lemma~\ref{lem single variable low char - image of f is separable in t} $\tilde{f}$ is separable in $t$. Hence by Lemma~\ref{lem main - squarefree values of polynomial separable in t} $P$ is non-zero.

Now,
$$\deg_t \tilde{f} \leq \|(\av,\bv,\cv)\| \deg_x f + \deg_t f$$
and
$$\deg_{\vec{x}} \tilde{f} \leq \deg_x f,$$
where $\deg_{\vec{x}}$ denotes the total degree of $\tilde{f}$ as polynomial in variables $x_1,x_2$. Assigning this into the bound which is given by Eq.~\ref{equation single variable main lemma pol bound} in Lemma~\ref{lem main - squarefree values of polynomial separable in t} we get
$$
\deg P \leq 2(\deg_t \tilde{f}-1)\deg_{\vec{x}} \tilde{f} \leq
$$
$$
(2(\|(\av,\bv,\cv)\|\deg_x f + \deg_t f)-1)\deg_x f.
$$
Let $\tilde{C}(N)$ be such that $\tilde{C}(N) \geq (2(\|(\av,\bv,\cv)\|\deg_x f + \deg_t f)-1)\deg_x f$, then $\deg P \leq \tilde{C}(N)$ holds. Since $P$ depends on $\tilde{f}$, and hence on $\av,\bv,\cv$ and $f$, we may denote it by $P_{f,\av,\bv,\cv}$.
\end{proof}

\section{Square free values of multivariate polynomials}\label{section simple gen}
The main result of this section is a generalization of Theorem~\ref{thm perturbations of a single variable polynomial finit field} to multivariate polynomials. The proof of the main result is a direct generalization of the proof of Theorem~\ref{thm perturbations of a single variable polynomial finit field}. We use this generalization to estimate the number of \sqf{} values of a multivariate polynomial $f$ at the set $\monic_{m_1}\times \dots \times \monic_{m_d}$ where $m_1,\dots,m_d \in \N$, $\deg_t f,\deg_{\vec{x}} f$ are fixed and $q \to \infty$. This result is stated in Corollary~\ref{cor sqf values in of monic polyomials} in Section~\ref{subsection sqf values of a multivariate polynomial}.

We first fix more notation for this section. Let $D$ be a unique factorization domain. For each $\btwn{i}{1}{d}$, let $\av_i,\bv_i,\cv_i \in D$. Let $l_i \in D[x_i,x_{d+i}]$ defined by $l_i = \av_ix_i+\bv_ix_{d+i}+\cv_i$. We define a $D$- homomorphism $\Psi_{l} : \polring{D}{d}{x} \rightarrow \polring{D}{2d}{x}$ by $x_i \mapsto l_i$ for each $\btwn{i}{1}{d}$. Equivalently, $\Psi_l$ is defined by $f(x_1,\dots,x_d) \mapsto f(l_1(x_1,x_{d+1}),\dots,l_d(x_d,x_{2d}))$. Throughout this section $\av_1,\bv_1,\cv_1,\dots,\av_d,\bv_d,\cv_d$ denote elements in $D$ and $\Psi_{l}$ is the homomorphism as defined above. In the first lemma $D$ is assumed to be any unique factorization domain, while in the rest of this section $D$ is assumed to be $\FF[t]$.

To short the notation, we use vector notation. Hence $\vec{\av} = (\av_1,\dots,\av_d),\vec{\bv} = (\bv_1,\dots,\bv_d),\vec{\cv} = (\cv_1,\dots,\cv_d)$. Also $\vec{\beta}=(\beta_1,\dots,\beta_{2d})$ where $\beta_1,\dots,\beta_{2d} \in \FF$. For $D=\FF[t]$ we define the following notation which generalizes the corresponding notation for a univariate polynomial.
\begin{multline}
\persimple{\FF}{d}{\av}{\bv}{\cv} := \persimplelong{\FF}{d}{\av}{\bv}{\cv}=\\ \{(\av_1(t)\beta_{1}+\bv_1(t)\beta_{d+1}+\cv_1(t),\dots\\, \av_d(t)\beta_d+\bv_d(t)\beta_{2d}+\cv_{d}(t)) \in \FF[t]^d: \beta_1,\dots,\beta_{2d} \in \FF\}.
\end{multline}

\subsection{Lemmas needed for the proof of the main result}
We start by generalizing the lemmas of Section~\ref{subsection Proof of Theorem thm single variable - algebraic}.

\begin{lem}[Generalization of Lemma~\ref{lem single variable large char - f is irr then so is rho(f)} part~\ref{single variable large char - is irreducible}]\label{lem simple gen lem single variable large char - f is irr then so is rho(f)}
Let $D$ be a unique factorization domain. Let $\bv_1,\av_1,\cv_1,\dots, \bv_d,\av_d,\cv_d \in D$, such that for each $\btwn{i}{1}{d}$, $\gcd(\av_i,\bv_i)=1$, and let $\Psi_l: D[x_1,\dots,x_d] \rightarrow D[x_1,\dots, x_{2d}]$ be a $D$-homomorphism defined as in the beginning of Section~\ref{section simple gen}. If $f$ is irreducible in $D[x_1,\dots,x_d]$ then $\Psi_l(f)$ is irreducible in $D[x_1,\dots,x_{2d}]$.
\end{lem}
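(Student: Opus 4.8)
The plan is to imitate, with $2d$ variables in place of two, the four–step argument behind Lemma~\ref{lem single variable large char - f is irr then so is rho(f)} part~\ref{single variable large char - is irreducible}: (i) extend $\Psi_l$ to an automorphism of $K[x_1,\dots,x_{2d}]$, where $K$ is the field of fractions of $D$; (ii) deduce that $\Psi_l(f)$ is irreducible over $K$; (iii) show that $\Psi_l(f)$ is primitive over $D$; (iv) combine (ii) and (iii) via Gauss's lemma. One could instead induct on $d$, splitting off one $x_j$ at a time and applying the single–variable lemma over the unique factorization domain $D[x_1,\dots,\widehat{x_j},\dots,x_d,x_{d+1},\dots,x_{d+j-1}]$; the direct route is slightly cleaner because it handles uniformly the degenerate case $\av_i=0$, in which $\gcd(\av_i,\bv_i)=1$ forces $\bv_i\in D^\times$ and which the single–variable lemma excludes.

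For step (i) I would set $\tilde\Psi=\sigma_1\circ\dots\circ\sigma_d$, where $\sigma_i$ fixes every variable except $x_i,x_{d+i}$ and on that pair acts by $x_i\mapsto\av_ix_i+\bv_ix_{d+i}+\cv_i$, $x_{d+i}\mapsto x_{d+i}$ when $\av_i\neq0$, and by $x_i\mapsto\bv_ix_{d+i}+\cv_i$, $x_{d+i}\mapsto x_i$ when $\av_i=0$ (so $\bv_i$ is a unit). Each $\sigma_i$ is an invertible affine coordinate change, the $\sigma_i$ act on disjoint pairs of variables, hence $\tilde\Psi$ is an automorphism, and its restriction to $D[x_1,\dots,x_d]$ is exactly $\Psi_l$; in particular $\Psi_l$ is injective. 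For step (ii) I would assume $\deg f>0$ (if $f$ is a prime of $D$ the statement is immediate, as $\Psi_l(f)=f$ stays prime in the larger polynomial ring). Then $f$ is primitive and, by Gauss, irreducible in $K[x_1,\dots,x_d]$; since $f$ has degree $0$ in each of $x_{d+1},\dots,x_{2d}$, every factorization of $f$ over $K[x_1,\dots,x_{2d}]$ already lives in $K[x_1,\dots,x_d]$, so $f$ is irreducible in $K[x_1,\dots,x_{2d}]$, and therefore $\Psi_l(f)=\tilde\Psi(f)$ is irreducible there by Lemma~\ref{lem general automorphism preserves irreducibility}.

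Step (iii) is where the hypothesis $\gcd(\av_i,\bv_i)=1$ actually enters, and it is the step I expect to need the most attention. For a prime $h\in D$ put $\bar D=D/\langle h\rangle$, which is a domain; reduction mod $h$ intertwines $\Psi_l$ with the analogous homomorphism $\Psi_{\bar l}$ over $\bar D$, so $\overline{\Psi_l(f)}=\Psi_{\bar l}(\bar f)$. Since $h$ cannot divide both $\av_i$ and $\bv_i$ (that would be a common factor, against $\gcd(\av_i,\bv_i)=1$), at least one of $\overline{\av_i},\overline{\bv_i}$ is nonzero in $\bar D$; running the step–(i) construction over the field of fractions of $\bar D$, where a nonzero $\overline{\bv_i}$ is automatically invertible, shows that $\Psi_{\bar l}$ is injective. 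As $f$ is primitive, $\bar f\neq0$, hence $\overline{\Psi_l(f)}\neq0$, i.e.\ $h$ does not divide every coefficient of $\Psi_l(f)$. Letting $h$ range over all primes of $D$ gives that $\Psi_l(f)$ is primitive over $D$. Finally, for step (iv): $\Psi_l(f)$ is primitive over $D$ and irreducible over $K$, so Gauss's lemma yields that $\Psi_l(f)$ is irreducible in $D[x_1,\dots,x_{2d}]$, which is the assertion. The only genuine obstacle is making the mod–$h$ reduction in step (iii) go through for \emph{every} prime $h$, and the coprimality hypothesis is precisely what guarantees a surviving nonzero coefficient $\overline{\av_i}$ or $\overline{\bv_i}$, which over a field is all that is needed for the substitution to remain invertible.
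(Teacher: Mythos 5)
Your proof is correct, but it takes a genuinely different route from the paper's. The paper proves this lemma by induction on $d$: it regards $f$ as a polynomial in $x_1,\dots,x_{d-1}$ over the unique factorization domain $D[x_d]$, applies the inductive hypothesis to substitute into the first $d-1$ variables, and then applies Lemma~\ref{lem single variable large char - f is irr then so is rho(f)} part~\ref{single variable large char - is irreducible} one more time (over the larger polynomial ring, which is still a UFD) to handle $x_d$. You instead rerun the entire four-step machinery of that single-variable lemma directly in the $2d$-variable setting: extension of $\Psi_l$ to an automorphism of $K[x_1,\dots,x_{2d}]$, irreducibility over $K$, primitivity over $D$ via reduction modulo each prime $h$ (where $\gcd(\av_i,\bv_i)=1$ guarantees one of $\overline{\av_i},\overline{\bv_i}$ survives), and Gauss's lemma. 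Both arguments are sound; the paper's induction is shorter because it reuses the single-variable lemma as a black box, while your direct version makes the role of each hypothesis visible at the multivariate level and, as you note, cleanly covers the degenerate case $\av_i=0$ (where $\gcd(\av_i,\bv_i)=1$ forces $\bv_i$ to be a unit and the affine change of coordinates must be replaced by a swap) — a case that the single-variable lemma formally excludes by its hypothesis $\av\neq 0$, so the paper's induction implicitly relies on a variant of that lemma there. Your separate treatment of the constant case ($f$ a prime of $D$) before invoking Gauss is also the right precaution.
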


\begin{proof}
The proof follows by applying Lemma~\ref{lem single variable large char - f is irr then so is rho(f)} part~\ref{single variable large char - is irreducible} inductively. The case where $d=1$ is proved by Lemma~\ref{lem single variable large char - f is irr then so is rho(f)} part~\ref{single variable large char - is irreducible}. Suppose the lemma holds for $d-1$. Let $\tilde{f} \in D[x_1,\dots,x_{2d-1}]$ defined by \begin{multline}\tilde{f}=f(\av_1x_1+\bv_1x_{d+1}+\cv_1,\av_2x_2+\bv_2x_{d+2}+\cv_2,\dots\\,\av_{d-1}x_{d-1}+\bv_{d-1}x_{2d-1}+\cv_{d-1},x_d).
\end{multline}
We first view $f$ as a polynomial in the variables $x_1,\dots,x_{d-1}$ over the domain $D[x_d]$. Then by the induction assumption $\tilde{f}$ is irreducible. Now view $\tilde{f}$ as a univariate polynomial in the variable $x_d$. Then by Lemma~\ref{lem single variable large char - f is irr then so is rho(f)} part~\ref{single variable large char - is irreducible} it follows that $\tilde{f}(x_1,\dots,x_{d-1},\av_dx_d+\bv_{d}x_{2d}+\cv_d)$ is irreducible. But $\Psi_l(f)=\tilde{f}(x_1,\dots,x_{d-1},\av_dx_d+\bv_{d}x_{2d}+\cv_d)$. Hence the lemma follows.
\end{proof}

\begin{lem}[Generalization of Lemma~\ref{lem single variable low char - separable displacments}]\label{lem simple gen low char - separable displacments}
Let $\FF$ be a field of positive characteristic $p$. Let $f \in \polfield{\FF(t)}{d}{x}$. Let $\av_1,\bv_1,\cv_1\dots,\av_d,\bv_d,\cv_d \in \FF(t)$ such that for each $\btwn{i}{1}{d}$ $\av_i \not = 0$ and $\frac{\bv_i}{\av_i} \not \in \FF(t^p)$. Suppose \begin{multline}
f(t,\av_1(t)x_1+\bv_1(t)x_{d+1}+\cv_{1}(t),\dots\\,\av_d(t)x_d+\bv_d(t)x_{2d}+\cv_{d}(t)) \in \FF(t^p,x_1,\dots,x_{2d}),
\end{multline}
then $f \in \FF(t^p,x_1^p,\dots,x_d^p)$.
\end{lem}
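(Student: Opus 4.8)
The plan is to reduce this multivariate statement to the single-variable Lemma~\ref{lem single variable low char - separable displacments} by peeling off one linear substitution at a time, exactly as Lemma~\ref{lem simple gen lem single variable large char - f is irr then so is rho(f)} was reduced to Lemma~\ref{lem single variable large char - f is irr then so is rho(f)} by induction on $d$. First I would observe, as in the proof of Lemma~\ref{lem single variable low char - separable displacments}, that it suffices to treat the case $\av_i = 1$ and $\cv_i = 0$ for all $i$: given the general case, set $\overline{f}(t,x_1,\dots,x_d) := f(t,\av_1(t)x_1+\cv_1(t),\dots,\av_d(t)x_d+\cv_d(t))$ and $\overline{\bv}_i := \bv_i/\av_i$, so that the hypothesis reads $\overline{f}(t,x_1+\overline{\bv}_1 x_{d+1},\dots,x_d+\overline{\bv}_d x_{2d}) \in \FF(t^p,x_1,\dots,x_{2d})$; the special case yields $\overline{f} \in \FF(t^p,x_1^p,\dots,x_d^p)$, and then substituting $x_i \mapsto (x_i-\cv_i(t))/\av_i(t)$ back, using $(x_i-\cv_i)^p/\av_i^p = (x_i^p - \cv_i^p)/\av_i^p$ and $\av_i^p, \cv_i^p \in \FF(t^p)$, gives $f \in \FF(t^p,x_1^p,\dots,x_d^p)$. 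So assume $\av_i=1$, $\cv_i=0$, and write $\tilde{f} := f(t,x_1+\bv_1(t)x_{d+1},\dots,x_d+\bv_d(t)x_{2d})$, with $\bv_i \notin \FF(t^p)$, i.e. $\bv_i'(t) \neq 0$.

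Next I would differentiate the hypothesis $\tilde{f} \in \FF(t^p,x_1,\dots,x_{2d})$ with respect to $t$ using the chain rule (Proposition on the Chain Rule, applied in the fraction field $\FF(t,x_1,\dots,x_{2d})$ via the derivation-extension of Section~\ref{subsubsection derivation rings}):
\[
0 = \frac{\partial \tilde f}{\partial t}(t,\vec x) = \sum_{i=1}^d \frac{\partial f}{\partial x_i}\bigl(t, x_1+\bv_1 x_{d+1},\dots, x_d+\bv_d x_{2d}\bigr)\,\bv_i'(t)\,x_{d+i} + \frac{\partial f}{\partial t}\bigl(t, x_1+\bv_1 x_{d+1},\dots,x_d+\bv_d x_{2d}\bigr).
\]
Then perform the (invertible, $\FF(t)$-linear) change of variables $x_i \mapsto x_i - \bv_i(t)x_{d+i}$ for $1 \le i \le d$, which turns this into
\[
0 = \sum_{i=1}^d \frac{\partial f}{\partial x_i}(t,x_1,\dots,x_d)\,\bv_i'(t)\,x_{d+i} + \frac{\partial f}{\partial t}(t,x_1,\dots,x_d),
\]
an identity in $\FF(t,x_1,\dots,x_{2d})$. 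Viewing the right-hand side as a polynomial in the independent new variables $x_{d+1},\dots,x_{2d}$ over $\FF(t,x_1,\dots,x_d)$ and equating coefficients to zero yields $\partial f/\partial t = 0$ and $\partial f/\partial x_i \cdot \bv_i'(t) = 0$ for each $i$; since $\bv_i'(t) \neq 0$, this forces $\partial f/\partial x_i = 0$ for all $i$ as well. By the standard fact that a rational function over a field of characteristic $p$ has vanishing partial derivative in a variable iff it lies in the subfield of $p$-th powers in that variable, we conclude $f \in \FF(t^p,x_1^p,\dots,x_d^p)$, as required.

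I expect the only genuinely delicate point to be the coefficient-comparison step: one must be sure that $x_{d+1},\dots,x_{2d}$ really are algebraically independent over $\FF(t,x_1,\dots,x_d)$ after the substitution, so that equating coefficients of monomials in them is legitimate — this holds because the substitution $x_i \mapsto x_i - \bv_i x_{d+i}$ extends to an $\FF(t)$-automorphism of $\FF(t,x_1,\dots,x_{2d})$ (its inverse is $x_i \mapsto x_i + \bv_i x_{d+i}$), hence sends a transcendence basis to a transcendence basis. Everything else is a routine bookkeeping generalization of the $d=1$ proof; in particular the reduction to $\av_i=1,\cv_i=0$ and the final appeal to the derivation criterion are verbatim the single-variable arguments applied coordinatewise.
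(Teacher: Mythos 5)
Your proof is correct and follows essentially the same route as the paper's: reduction to the case $\av_i=1$, $\cv_i=0$, differentiation in $t$ via the chain rule, the change of variables $x_i\mapsto x_i-\bv_i x_{d+i}$, and coefficient comparison in $x_{d+1},\dots,x_{2d}$ combined with $\bv_i'\neq 0$. (Despite your opening sentence about inducting on $d$ to reduce to the single-variable lemma, what you actually carry out is the direct $d$-variable generalization, which is exactly the paper's argument; your remark justifying the coefficient comparison via the automorphism $x_i\mapsto x_i-\bv_i x_{d+i}$ is a point the paper leaves implicit.)
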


\begin{proof}
As in the proof of Lemma~\ref{lem single variable low char - separable displacments}, we first note that it is sufficient to prove the lemma in the case where for each $\btwn{i}{1}{d}$ $\av_i=1$ and $\cv_i=0$. This is because if we define $\overline{f}(t,x_1,\dots,x_d):=f(t,\av_1(t)x_1+\cv_1(t),\dots,\av_d(t)x_d+\cv_d(t))$, and define $\overline{\av_i}:=1,\overline{\bv_i}:=\frac{\bv_i}{\av_i},\overline{\cv_i}:=0$, then $f(t,\av_1(t)x_1+\bv_1(t)x_{d+1}+\cv_1(t),\dots,\av_d(t)x_d+\bv_d(t)x_{2d}+\cv_d(t))=\overline{f}(t,x_1+\frac{\bv_1}{\av_1}(t)x_{d+1},\dots,x_d+\frac{\bv_d}{\av_d}(t)x_{2d})
=\overline{f}(t,\overline{\av_1}(t)x_1+\overline{\bv_1}(t)x_{d+1}+\overline{\cv_1}(t),\dots,\overline{\av_d}(t)x_d+\overline{\bv_d}(t)x_{2d}+\overline{\cv_d}(t))$. Proving the lemma for the special case where $f=\overline{f}$ and for each $\btwn{i}{1}{d}$ $\av_i=\overline{\av_i},\bv_i=\overline{\bv_i},\cv_i=\overline{\cv_i}$ will show that if

$$\overline{f}(t,x_1+\frac{\bv_1}{\av_1}(t)x_{d+1},\dots,x_d+\frac{\bv_d}{\av_d}(t)x_{2d}) \in
\polfield{\FF(t^p)}{d}{x^p}$$ then$$\overline{f} \in \polfield{\FF(t^p)}{d}{x^p},$$i.e. $\overline{f}=g(t^p,x_1^p,\dots,x_d^p)$ for some $g \in \polfield{\FF(t)}{d}{x}$. But then \begin{multline}f\left (t,x_1,\dots,x_d\right )=\overline{f}\left (t,\frac{x_1-\cv_1(t)}{\av_1(t)},\dots,\frac{x_d-\cv_d(t)}{\av_d(t)}\right)=\\g\left (t^p, \left (\frac{x_1-\cv_1(t)}{\av_1(t)}\right )^p,\dots,\left (\frac{x_d-\cv_d(t)}{\av_d(t)}\right )^p\right ) \in \polfield{\FF(t^p)}{d}{x^p}.
\end{multline}

Hence from now on we can assume that for each $\btwn{i}{1}{d}$ $\av_i=1$ and $\cv_i=0$. Now, let$$\tilde{f}:=f(t,x_1+\bv_1(t)x_{d+1},\dots,x_d+\bv_d(t)x_{2d}).$$Suppose $\tilde{f} \in \FF(t^p)(x_1,\dots,x_{2d})$. Then by the chain rule

\begin{multline}
0 = \frac{\partial \tilde{f}}{\partial t}(t,x_1,\dots,x_d)=\\
\sum_{i=1}^d \partiald{f}{x_i}(t,x_1+\bv_1(t)x_{d+1},\dots,x_d+\bv_{d}(t)x_{2d})\bv_i'(t)x_{d+i}+\\\partiald{f}{t}(t,x_1+\bv_1(t)x_{d+1},\dots,x_d+\bv_{d}(t)x_{2d}).
\end{multline}
By a change of variables $x_i=x_i-\bv_i(t)x_{d+i},\forall \btwn{i}{1}{d}$ we get:
$$
0 = \sum_{i=1}^d \frac{\partial f}{\partial x_i}(t,x_1,\dots,x_d)\bv_i'(t)x_{d+i}+\frac{\partial f}{\partial t}(t,x_1,\dots,x_d).
$$
We view the above as a polynomial in variables$$x_{d+1},\dots,x_{2d}$$over $\polfield{\FF(t)}{d}{x}$. By equating the coefficients of this polynomial to $0$ we get the following equations
\begin{equation}\label{equation simple gen single condition by equating free coeficient}
\frac{\partial f}{\partial t}(t,x_1,\dots,x_d)=0
\end{equation}
\begin{equation}\label{equation simple gen single condition by equating linear coeficient}
\partiald{f}{x_i}(t,x_1,\dots,x_d)\bv_i'(t)=0,\;\forall\btwn{i}{1}{d}
\end{equation}

The lemma follows from Eq.~\ref{equation simple gen single condition by equating free coeficient} and Eq.~\ref{equation simple gen single condition by equating linear coeficient} above and by the assumption that $\frac{\bv_i}{\av_i}=\bv_i \not \in \FF(t^p)$, hence $\bv_i' \not = 0$, for each $\btwn{i}{1}{d}$.

%Eq.~\ref{equation simple gen single condition by equating free coeficient} holds if and only if $\frac{\partial f}{\partial t} = 0$. Since $\bv_i'(t) \not = 0$ for each $\btwn{i}{1}{d}$ by our assumption that $\frac{\bv_i}{\av_i}=\bv_i \not \in \FF(t^p)$, Eq.~\ref{equation simple gen single condition by equating linear coeficient} holds if and only if $\partiald{f}{x_i} = 0$ for each $\btwn{i}{1}{d}$. Hence both equations hold if and only if all derivatives by $x_1,\dots,x_d$ and by $t$ vanish, which is equivalent to $f \in \FF(t^p,x_1^p,\dots,x_d^p)$.

\end{proof}

%Let $\vec{a} \in \FF[t]^{\tilde{d}},\vec{\cv} \in \FF[t]^d$. We use the notation $\vec{a}\cdot\vec{x} = \sum_{i=0}^{\tilde{d}}a_ix_i$. $f(t,\vec{a}\cdot\vec{x}+\cv(t)):=f(t,\sum_{i=0}^{\tilde{d}}a_ix_i+\cv_i(t))$

\begin{lem}[Generalization of Lemma~\ref{lem single variable low char - image of f is separable in t}]\label{lem simple gen low char - image of f is separable in t}
Let $f \in \polring{\FF[t]}{d}{x}$ be \sqf{} in $\polring{\overline{\FF}[t]}{d}{x}$. Let $\av_1,\bv_1,\cv_1,\dots,\av_d,\bv_d,\cv_d \in \FF[t]$ such that for each $\btwn{i}{1}{d}$, $\gcd(\av_i,\bv_i)=1$. Let $\Psi_l : \polring{\FF[t]}{d}{x} \rightarrow \polring{\FF[t]}{2d}{x}$ be the $\FF[t]$-homomorphism as defined in the beginning of Section~\ref{section simple gen}. If at least one of the following holds then $\Psi_l(f)$ is separable in $t$.
\begin{enumerate}
\item \label{item simple gen theorem - large char}$p=0$ or %$p > \|(\bv_1,\av_1,\cv_1)\|\deg_{x_1}f+\dots+\|(\bv_d,\av_d,\cv_d)\|\deg_{x_d}f +\deg_t f$
    $p > \|(\bv_1,\av_1,\cv_1,\dots,\av_d,\bv_d,\cv_d)\|\deg_{\vec{x}}f+\deg_t f$.
\item \label{item simple gen theorem - arbitrary char}For each $\btwn{i}{1}{d}$ $\frac{\bv_i}{\av_i} \not \in \FF(t^p)$ where $\av_i \not = 0$.
\end{enumerate}
\end{lem}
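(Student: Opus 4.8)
The plan is to transcribe the proof of Lemma~\ref{lem single variable low char - image of f is separable in t} almost verbatim, replacing the single displacement $x_1\mapsto \av x_1+\bv x_2+\cv$ by the simultaneous displacement $x_i\mapsto l_i$ and invoking the multivariate versions of the auxiliary lemmas already proved in this section. First I would fix a factorization $f=\prod_{i=1}^{k}f_i$ into irreducibles in $\polring{\FF[t]}{d}{x}$, so that $\Psi_l(f)=\prod_{i=1}^{k}\Psi_l(f_i)$ by multiplicativity of the homomorphism. Since $\gcd(\av_i,\bv_i)=1$ for every $i$, Lemma~\ref{lem simple gen lem single variable large char - f is irr then so is rho(f)} shows that each $\Psi_l(f_i)$ is irreducible in $\polring{\FF[t]}{2d}{x}$, so this is an irreducible factorization of $\Psi_l(f)$. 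Moreover $\Psi_l$ is injective: over the fraction field $\FF(t)$ it extends to the automorphism sending $x_i\mapsto l_i$ for $1\le i\le d$ and $x_{d+i}\mapsto x_{d+i}$, whose inverse sends $x_i\mapsto \av_i^{-1}(x_i-\bv_i x_{d+i}-\cv_i)$ and $x_{d+i}\mapsto x_{d+i}$ (equivalently, apply Lemma~\ref{lem single variable large char - f is irr then so is rho(f)} part~\ref{single variable large char - can be extended} one variable at a time). Hence if $\Psi_l(f_i)=\alpha\Psi_l(f_j)=\Psi_l(\alpha f_j)$ for some $i\neq j$ and $\alpha\in\FF$, injectivity forces $f_i=\alpha f_j$, contradicting that $f$ is \sqf{}; thus $\Psi_l(f)$ is \sqf{} in $\polring{\FF[t]}{2d}{x}$.

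Next I would dispose of the characteristic-zero case at once: by Corollary~\ref{cor if f is separable in t it has a factor in ftp} part~\ref{part if char=0 then f is separable}, a \sqf{} polynomial over a field of characteristic $0$ is separable in $t$, so $\Psi_l(f)$ is separable in $t$. Assume then $p>0$ and, for contradiction, that $\Psi_l(f)$ is not separable in $t$. By Corollary~\ref{cor if f is separable in t it has a factor in ftp} part~\ref{part part if char>0 then f has a factor in ftp}, together with the irreducible factorization above, there is an index $i$ with $\Psi_l(f_i)\in\polring{\FF[t^p]}{2d}{x}$ and $\deg_t\Psi_l(f_i)>0$.

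Finally I would derive a contradiction, splitting according to which hypothesis holds. If~(\ref{item simple gen theorem - large char}) holds then, using $\deg_t l_j\le\|(\bv_1,\av_1,\cv_1,\dots,\av_d,\bv_d,\cv_d)\|$ for every $j$,
\[
\deg_t\Psi_l(f_i)\;\le\;\deg_t\Psi_l(f)\;\le\;\|(\bv_1,\av_1,\cv_1,\dots,\av_d,\bv_d,\cv_d)\|\,\deg_{\vec{x}} f+\deg_t f\;<\;p,
\]
which is impossible, since a nonzero polynomial in $\polring{\FF[t^p]}{2d}{x}$ of positive $t$-degree has $t$-degree a positive multiple of $p$. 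If instead~(\ref{item simple gen theorem - arbitrary char}) holds, then the substitution of $f_i$ at $x_j=l_j$ is exactly $\Psi_l(f_i)$, which lies in $\polring{\FF[t^p]}{2d}{x}$; applying Lemma~\ref{lem simple gen low char - separable displacments} to $f_i$ (whose hypotheses are all among ours) gives $f_i\in\FF(t^p,x_1^p,\dots,x_d^p)$, and being a polynomial $f_i$ then lies in $\FF[t^p,x_1^p,\dots,x_d^p]$. Thus, viewing $f$ as a polynomial over $\FF$ in the $d+1$ variables $t,x_1,\dots,x_d$, it has an irreducible factor lying in the subring generated by the $p$-th powers of these variables; by the equivalence of~(\ref{absolutely squarefree 1}) and~(\ref{absolutely squarefree 3}) in Theorem~\ref{thm absolutely square-free equivalent conditions} this contradicts the hypothesis that $f$ is \sqf{} in $\polring{\overline{\FF}[t]}{d}{x}$. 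In every case $\Psi_l(f)$ is separable in $t$, as claimed.

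The argument is a genuine transcription of the $d=1$ proof rather than a new one, so there is no real obstacle; the only point demanding attention is the variable bookkeeping. One must track that the irreducible factor $f_i\in\polring{\FF[t]}{d}{x}$ is fed correctly into Lemma~\ref{lem simple gen low char - separable displacments} (phrased for a function of $t$ together with the $2d$ displaced variables $x_1,\dots,x_{2d}$), and that the conclusion of that lemma is then reread as a statement about $f$ in the $d+1$ variables $t,x_1,\dots,x_d$, so that the $(d+1)$-variable instance of Theorem~\ref{thm absolutely square-free equivalent conditions} applies. The degree count, the injectivity of $\Psi_l$, and the case split are verbatim analogues of the univariate proof with the univariate lemmas replaced by their generalizations from this section.
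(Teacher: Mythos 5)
Your proposal is correct and follows essentially the same route as the paper: factor $f$ into irreducibles, use Lemma~\ref{lem simple gen lem single variable large char - f is irr then so is rho(f)} and injectivity of $\Psi_l$ to see that $\Psi_l(f)$ is \sqf{}, dispose of $p=0$ via Corollary~\ref{cor if f is separable in t it has a factor in ftp}, and for $p>0$ derive a contradiction from an irreducible factor $\Psi_l(f_i)\in\polring{\FF[t^p]}{2d}{x}$ either by the degree bound (case~(\ref{item simple gen theorem - large char})) or via Lemma~\ref{lem simple gen low char - separable displacments} and Theorem~\ref{thm absolutely square-free equivalent conditions} (case~(\ref{item simple gen theorem - arbitrary char})). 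The only cosmetic difference is that you justify injectivity of $\Psi_l$ directly by the fraction-field automorphism, while the paper cites the triviality of the kernel as a consequence of the irreducibility lemma; both are valid.
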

\begin{proof}
Let $f=\prod_{i=1}^k f_i$ be a factorization of $f$ into irreducible factors. Then by homomorphism properties
\begin{equation}\label{equation simple gen low cahr - factorizatoin of rho(f)}
\Psi_l(f) = \prod_{i=1}^k \Psi_{l}(f_i).
\end{equation}

By Lemma~\ref{lem simple gen lem single variable large char - f is irr then so is rho(f)} Eq.~\ref{equation simple gen low cahr - factorizatoin of rho(f)} provides a factorization of $\Psi_l(f)$ into irreducible factors in $\polring{\FF[t]}{2d}{x}$. We now show that $\Psi_l(f)$ is \sqf{}. Suppose $\Psi_l(f_i)=\Psi_l(f_j)\alpha$ where $i \not = j$ and $\alpha \in \FF$. Since $\Psi_l(\alpha)=\alpha$ we have $\Psi_l(f_i)=\Psi_l(f_j)\Psi_l(\alpha)=\Psi_l(f_j\alpha)$. But $\Psi_l$ is injective, since Lemma~\ref{lem simple gen lem single variable large char - f is irr then so is rho(f)} implies in particular that the kernel of $\Psi_l$ is trivial. Thus we conclude that $f_i=f_j\alpha$. But that is a contradiction to the assumption that $f$ is \sqf{}. Hence $\Psi_l(f)$ is \sqf{}. If $p=0$ then we are done, since by Corollary~\ref{cor if f is separable in t it has a factor in ftp} part~\ref{part if char=0 then f is separable} $\Psi_l(f)$ being \sqf{} implies that $\Psi_l(f)$ is separable in $t$.

Now suppose $p>0$, and suppose on the contrary that $\Psi_l(f)$ is not separable as polynomial in $t$. Then by Corollary~\ref{cor if f is separable in t it has a factor in ftp} part~\ref{part part if char>0 then f has a factor in ftp} $\Psi_l(f)$ has an irreducible factor in $\polring{\FF[t^p]}{2d}{x}$ where its degree in $t$ is not zero. Hence we can assume without loss of generality that for some $\btwn{j}{1}{k}$

\begin{equation} \label{equation simple gen - f_i in ftp}
\Psi_l(f_j) \in \polring{\FF[t^p]}{2d}{x}\mbox{, where }\deg_t \Psi_l(f_j) > 0.
\end{equation}

We will now show that Eq.~\ref{equation simple gen - f_i in ftp} cannot hold. We split the proof, depending on which of (\ref{item simple gen theorem - large char}) or (\ref{item simple gen theorem - arbitrary char}) holds.

Suppose (\ref{item simple gen theorem - large char}) holds. Then Eq.~\ref{equation simple gen - f_i in ftp} cannot hold since for any $\btwn{j}{1}{k}$
$$
\deg_t (\Psi_l(f_j)) \leq \deg_t (\Psi_l(f)) \leq \|(\bv_1,\av_1,\cv_1,\dots,\av_d,\bv_d,\cv_d)\|\deg_{\vec{x}}f+\deg_t f < p
$$
where the last inequality is by our assumption on $p$.

Suppose (\ref{item simple gen theorem - arbitrary char}) holds. Since $\frac{\bv_i}{\av_i} \not \in \FF(t^p)\;\forall \btwn{i}{1}{d}$, by Lemma~\ref{lem simple gen low char - separable displacments} $f_j \in \FF[t^p,x_1^p,\dots,x_d^p]$. Now by the equivalent conditions in Theorem~\ref{thm absolutely square-free equivalent conditions} $f$ cannot be \sqf{} in $\overline{\FF}[t,x_1,\dots,x_d]$. This is a contradiction to our assumptions which shows that Eq.~\ref{equation simple gen - f_i in ftp} does not hold. Hence $\Psi_l(f)$ is separable in $t$.
\end{proof}

\subsection{The main theorem for a multivariate polynomial}
We now state the main theorem for a multivariate polynomial over a finite field which generalizes Theorem~\ref{thm perturbations of a single variable polynomial finit field}.

\begin{thm}[Generalization of Theorem~\ref{thm perturbations of a single variable polynomial finit field}]\label{thm simple gen perturbations of a polynomial finit field}
Let $f \in \polring{\fq[t]}{d}{x}$ be a \sqf{} polynomial. Let $\av_1,\bv_1,\cv_1,\dots,\av_d,\bv_d,\cv_d \in \fq[t]$ such that for each $\btwn{i}{1}{d}$ $\gcd(\av_i,\bv_i)=1$. Let $N \in \N$. Assume $$\deg_{\vec{x}} f,\deg_t f, \|(\av_1,\bv_1,\cv_1,\dots,\av_d,\bv_d,\cv_d)\| \leq N.$$ Assume that at least one of the following holds

\begin{enumerate}
\item \label{part simple gen theorem large char}$p > C(N)$ where $C(N) \in \N$ is a constant which depends only on $N$.
\item\label{part simple gen theorem arbitrary char}For each $\btwn{i}{1}{d}$ $\frac{\bv_i}{\av_i} \not \in \FF(t^p)$ where $\av_i \not = 0$.

Then while $N$ remains fixed, we have:
\begin{equation}\label{equation simiple gen main theorem estimate}
   \frac{\#(\squaref{\fq}{d}{f} \bigcap \persimple{\fq}{d}{\av}{\bv}{\cv})}{\#\persimple{\fq}{d}{\av}{\bv}{\cv})} = 1 + O \left (\frac{1}{q} \right )\;,\quad \mbox{as } q\to \infty.
\end{equation}
In particular, if $q$ is sufficiently large with respect to $N$ then there exist $\vec{\beta} \in \fq^{2d}$ such that $f(t,\cv_1(t)+\av_1(t)\beta_1+\bv_1(t)\beta_{d+1},\dots,\cv_d(t)+\av_d(t)\beta_d+\bv_d(t)\beta_{2d})$ is \sqf{}.
\end{enumerate}
\end{thm}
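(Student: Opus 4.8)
The plan is to follow the two-stage route used for Theorem~\ref{thm perturbations of a single variable polynomial finit field}: first isolate an algebraic statement valid over an arbitrary field $\FF$, and then specialise to $\FF=\fq$ and count zeros. Concretely, I would first prove the multivariate analogue of Theorem~\ref{thm single variable - algebraic}: if $f\in\polring{\FF[t]}{d}{x}$ is \sqf{} in $\polring{\overline{\FF}[t]}{d}{x}$, if $\av_i,\bv_i,\cv_i\in\FF[t]$ satisfy $\gcd(\av_i,\bv_i)=1$ for every $i$, and if $\deg_{\vec{x}}f,\deg_t f,\|(\av_1,\bv_1,\cv_1,\dots,\av_d,\bv_d,\cv_d)\|\le N$, then there is a polynomial $P\in\polring{\FF}{2d}{x}$, depending on $f$ and on $\vec{\av},\vec{\bv},\vec{\cv}$, with $\deg P\le\tilde C(N)$ for a constant depending only on $N$, such that
\[
\{\vec{\beta}\in\FF^{2d}:f(t,\av_1\beta_1+\bv_1\beta_{d+1}+\cv_1,\dots,\av_d\beta_d+\bv_d\beta_{2d}+\cv_d)\text{ is not separable}\}\subseteq\{\vec{\beta}\in\FF^{2d}:P(\vec{\beta})=0\},
\]
and such that $P$ is nonzero whenever hypothesis~\ref{part simple gen theorem large char} or hypothesis~\ref{part simple gen theorem arbitrary char} of the theorem holds.

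To prove this I would set $\tilde{f}:=\Psi_l(f)\in\polring{\FF[t]}{2d}{x}$, with $\Psi_l$ the $\FF[t]$-homomorphism introduced at the start of this section, so that $\tilde{f}(t,\vec{\beta})=f(t,l_1(t,\beta_1,\beta_{d+1}),\dots,l_d(t,\beta_d,\beta_{2d}))$ for every $\vec{\beta}$. Applying Lemma~\ref{lem main - squarefree values of polynomial separable in t} to $\tilde{f}$ yields $P$ with the stated containment of non-separability loci and with $\deg P\le(2\deg_t\tilde{f}-1)\deg_{\vec{x}}\tilde{f}$; since $\deg_t\tilde{f}\le\|(\av_1,\bv_1,\cv_1,\dots,\av_d,\bv_d,\cv_d)\|\deg_{\vec{x}}f+\deg_t f$ and $\deg_{\vec{x}}\tilde{f}\le\deg_{\vec{x}}f$, this bound is a function of $N$ alone, which I take for $\tilde C(N)$. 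For nonvanishing, Lemma~\ref{lem main - squarefree values of polynomial separable in t} gives $P\ne0$ exactly when $\tilde{f}$ is separable in $t$, and this is the conclusion of Lemma~\ref{lem simple gen low char - image of f is separable in t}: choosing $C(N)\ge\|(\av_1,\bv_1,\cv_1,\dots,\av_d,\bv_d,\cv_d)\|\deg_{\vec{x}}f+\deg_t f$ makes its hypothesis~\ref{item simple gen theorem - large char} follow from hypothesis~\ref{part simple gen theorem large char} of the theorem, while its hypothesis~\ref{item simple gen theorem - arbitrary char} is hypothesis~\ref{part simple gen theorem arbitrary char} verbatim.

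Next I would specialise to $\FF=\fq$. Because $\fq$ is perfect, Corollary~\ref{cor square-free equals absolutely square-free} lets me replace ``\sqf{} in $\polring{\overline{\fq}[t]}{d}{x}$'' by ``\sqf{}'' in the hypothesis, and lets me treat ``not separable in $t$'' and ``not \sqf{}'' as the same condition on an element of $\fq[t]$, so the non-\sqf{} values are cut out by $P$. By definition the image of the substitution map $\fq^{2d}\to\fq[t]^d$, $\vec{\beta}\mapsto(\av_1\beta_1+\bv_1\beta_{d+1}+\cv_1,\dots,\av_d\beta_d+\bv_d\beta_{2d}+\cv_d)$, is $\persimple{\fq}{d}{\av}{\bv}{\cv}$; and since this map is affine-linear over $\fq$, all its nonempty fibres have the same cardinality, so
\[
1-\frac{\#(\squaref{\fq}{d}{f}\bigcap\persimple{\fq}{d}{\av}{\bv}{\cv})}{\#\persimple{\fq}{d}{\av}{\bv}{\cv}}=\frac{\#\{\vec{\beta}\in\fq^{2d}:f(t,l_1,\dots,l_d)\text{ is not square-free}\}}{q^{2d}}\le\frac{\#\{\vec{\beta}\in\fq^{2d}:P(\vec{\beta})=0\}}{q^{2d}}.
\]
Since $P$ is a nonzero polynomial in $2d$ variables of total degree at most $\tilde C(N)$, Lemma~\ref{lem elementary bound on number of zeros} bounds the last quantity by $\tilde C(N)q^{2d-1}/q^{2d}=O(1/q)$ with $N$ fixed, which is Eq.~\ref{equation simiple gen main theorem estimate}; in particular, for $q$ large the left-hand count is positive, producing $\vec{\beta}\in\fq^{2d}$ for which $f(t,\cv_1+\av_1\beta_1+\bv_1\beta_{d+1},\dots,\cv_d+\av_d\beta_d+\bv_d\beta_{2d})$ is \sqf{}.

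I expect no serious obstacle at this stage: the substantive content lives in Lemmas~\ref{lem simple gen lem single variable large char - f is irr then so is rho(f)}, \ref{lem simple gen low char - separable displacments} and~\ref{lem simple gen low char - image of f is separable in t} --- the multivariate counterparts of the lemmas driving the single-variable proof --- and what remains is assembly. The two points that genuinely need care are: that the substitution map is $\fq$-affine-linear, so its nonempty fibres are equinumerous and the error may legitimately be normalised by $\#\persimple{\fq}{d}{\av}{\bv}{\cv}$ rather than by $q^{2d}$; and that one constant $\tilde C(N)$ bounds $\deg P$ while one constant $C(N)$ serves as the characteristic threshold in hypothesis~\ref{part simple gen theorem large char}, uniformly in $f,\vec{\av},\vec{\bv},\vec{\cv}$ subject to the degree bound $N$.
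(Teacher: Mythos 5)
Your proposal is correct and follows essentially the same route as the paper: reduce to the algebraic Theorem~\ref{thm simple gen - algebraic} via $\tilde f=\Psi_l(f)$ and Lemma~\ref{lem main - squarefree values of polynomial separable in t}, establish nonvanishing of $P$ through Lemma~\ref{lem simple gen low char - image of f is separable in t}, then specialise to $\fq$ using Corollary~\ref{cor square-free equals absolutely square-free} and count zeros with Lemma~\ref{lem elementary bound on number of zeros}. Your explicit remark that the affine-linear substitution map has equinumerous nonempty fibres, justifying the normalisation by $\#\persimple{\fq}{d}{\av}{\bv}{\cv}$ rather than $q^{2d}$, is a point the paper passes over silently and is worth making.
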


Let $\kappa_1,\dots,\kappa_d \in \N$ where for each $\btwn{i}{1}{d}$ $\kappa_i \not = 0 \mod p$. Example~\ref{example simple perturbation generalization} in Section~\ref{subsection sqf values of a multivariate polynomial} is the specific case of Theorem~\ref{thm simple gen perturbations of a polynomial finit field} where $\av_i=1$, and $\bv_i=t^{\kappa_i}$, $\forall \btwn{i}{1}{d}$.

As we did in the case of a univariate polynomial, we prove Theorem~\ref{thm simple gen perturbations of a polynomial finit field} by stating and proving an algebraic theorem which holds for a general field $\FF$, and showing that Theorem~\ref{thm simple gen perturbations of a polynomial finit field} follows from the algebraic theorem in the case where $\FF$ is a finite field. The following theorem holds for any field $\FF$.

\begin{thm}[Generalization of Theorem~\ref{thm single variable - algebraic}]\label{thm simple gen - algebraic}
Let $\FF$ be a field, and let $\overline{\FF}$ be an algebraic closure of $\FF$. Let $f \in \polring{\FF[t]}{d}{x}$ be a polynomial which is \sqf{} in $\polring{\overline{\FF}[t]}{d}{x}$. Let $\av_1,\bv_1,\cv_1,\dots,\av_d,\bv_d,\cv_d \in \FF[t]$ such that for each $\btwn{i}{1}{d}$ $\gcd(\av_i,\bv_i)=1$. Assume$$\deg_{\vec{x}} f,\deg_t f, \|(\av_1,\bv_1,\cv_1,\dots,\av_d,\bv_d,\cv_d)\| \leq N.$$Then there exists a polynomial $\multP{\av}{\bv}{\cv} \in \polring{\FF}{2d}{x}$ which depends on $\bv_1,\av_1,\cv_1,\dots,\bv_d,\av_d,\cv_d$ and $f$ such that
\begin{multline}\label{equation simple gen polynomial P property}
\{\vec{\beta} \in \FF^{2d} :\\ f(t,\av_1(t)\beta_1+\bv_1(t)\beta_{d+1}+\cv_1(t),\dots,\av_d(t)\beta_d+\bv_d(t)\beta_{2d}+\cv_d(t)) \mbox{ is not separable}\}\\ \subseteq \{\vec{\beta} \in \FF^{2d} : \multP{\av}{\bv}{\cv}(\beta_1,\dots,\beta_{2d})=0\}.
\end{multline}
Moreover, there exists a constant $\tilde{C}(N)$ which depends only on $N$ such that$$\deg \multP{\av}{\bv}{\cv} \leq \tilde{C}(N).$$
$\multP{\av}{\bv}{\cv}$ is non-zero if at least one of the following holds:
\begin{enumerate}
\item \label{part thm simple gen - albgebraic - single variable perturbation}
$p=0$ or $p>C(N)$ where $C(N)$ is a constant which depends only on $N$.%$p> \|(\av_1,\bv_1,\cv_1,\dots,\av_d,\bv_d,\cv_d)\|\deg_{\vec{x}} f+ \deg_t f$.
\item\label{part thm simple gen - albgebraic - two variables perturbation}
For each $\btwn{i}{1}{d}$ $\frac{\bv_i}{\av_i} \not \in \FF(t^p)$ where $\av_i \not = 0$.
\end{enumerate}
\end{thm}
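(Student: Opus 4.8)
The plan is to reproduce, step for step, the proof of Theorem~\ref{thm single variable - algebraic}, with the single displacement homomorphism replaced by the $\FF[t]$--homomorphism $\Psi_l\colon\polring{\FF[t]}{d}{x}\to\polring{\FF[t]}{2d}{x}$ fixed at the beginning of this section, and with Lemma~\ref{lem main - squarefree values of polynomial separable in t} applied to a polynomial in $2d$ variables rather than in one. Concretely, I would set $\tilde f:=\Psi_l(f)\in\polring{\FF[t]}{2d}{x}$ and let $P\in\polring{\FF}{2d}{x}$ be the polynomial produced by Lemma~\ref{lem main - squarefree values of polynomial separable in t} applied to $\tilde f$; at the end I would define $\multP{\av}{\bv}{\cv}:=P$, which then depends only on $f$ and on $\vec{\av},\vec{\bv},\vec{\cv}$ as required.

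The first step is to record the substitution identity. Since $\Psi_l$ is determined by $x_i\mapsto \av_ix_i+\bv_ix_{d+i}+\cv_i$, for every $\vec{\beta}\in\FF^{2d}$ we have
$$
\tilde f(t,\beta_1,\dots,\beta_{2d})=f\bigl(t,\av_1(t)\beta_1+\bv_1(t)\beta_{d+1}+\cv_1(t),\dots,\av_d(t)\beta_d+\bv_d(t)\beta_{2d}+\cv_d(t)\bigr).
$$
Hence the set of $\vec{\beta}$ for which the right-hand side is not separable coincides with the set of $\vec{\beta}$ for which $\tilde f(t,\beta_1,\dots,\beta_{2d})$ is not separable, and Lemma~\ref{lem main - squarefree values of polynomial separable in t} places the latter inside $\{\vec{\beta}:P(\vec{\beta})=0\}$; this is exactly the inclusion in Eq.~\ref{equation simple gen polynomial P property}.

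Next I would prove that $P$ is non-zero under either hypothesis. By the final clause of Lemma~\ref{lem main - squarefree values of polynomial separable in t}, $P\ne 0$ if and only if $\tilde f=\Psi_l(f)$ is separable in $t$, so it suffices to invoke Lemma~\ref{lem simple gen low char - image of f is separable in t}. Taking $C(N)$ large enough that $C(N)\ge\|(\av_1,\bv_1,\cv_1,\dots,\av_d,\bv_d,\cv_d)\|\deg_{\vec{x}}f+\deg_t f$ whenever all these quantities are bounded by $N$, hypothesis~\ref{part thm simple gen - albgebraic - single variable perturbation} of the theorem implies hypothesis~\ref{item simple gen theorem - large char} of that lemma, while hypothesis~\ref{part thm simple gen - albgebraic - two variables perturbation} is literally hypothesis~\ref{item simple gen theorem - arbitrary char}; since $f$ is \sqf{} in $\polring{\overline{\FF}[t]}{d}{x}$ by assumption, Lemma~\ref{lem simple gen low char - image of f is separable in t} yields that $\Psi_l(f)$ is separable in $t$, hence $P\ne 0$. (The injectivity of $\Psi_l$ used implicitly in that lemma, together with the factorization of $\Psi_l(f)$ into irreducibles, is supplied by Lemma~\ref{lem simple gen lem single variable large char - f is irr then so is rho(f)}.)

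Finally I would bound the degree. Replacing each $x_i$ by $l_i$, a polynomial of total degree $1$ in the new variables whose coefficients have $t$--degree at most $\|(\av_1,\bv_1,\cv_1,\dots,\av_d,\bv_d,\cv_d)\|$, gives
$$
\deg_t\tilde f\le\|(\av_1,\bv_1,\cv_1,\dots,\av_d,\bv_d,\cv_d)\|\deg_{\vec{x}}f+\deg_t f,\qquad\deg_{\vec{x}}\tilde f\le\deg_{\vec{x}}f,
$$
where on the target side $\deg_{\vec{x}}$ means total degree in $x_1,\dots,x_{2d}$. Feeding these into the bound $\deg P\le(2\deg_t\tilde f-1)\deg_{\vec{x}}\tilde f$ of Lemma~\ref{lem main - squarefree values of polynomial separable in t} gives an estimate for $\deg P$ that depends only on $N$, and I would take that estimate as the definition of $\tilde C(N)$. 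I do not anticipate a genuine obstacle: the real content of the generalization is already contained in the three ``simple gen'' lemmas above --- especially Lemma~\ref{lem simple gen lem single variable large char - f is irr then so is rho(f)}, which disposes of irreducibility and injectivity for the multivariate $\Psi_l$ by an induction on $d$ --- so what remains is the bookkeeping of re-running the single-variable argument with $2d$ target variables.
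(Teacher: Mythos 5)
Your proposal is correct and follows essentially the same route as the paper: set $\tilde f=\Psi_l(f)$, apply Lemma~\ref{lem main - squarefree values of polynomial separable in t} to $\tilde f$ to obtain $P$ and the inclusion via the substitution identity, invoke Lemma~\ref{lem simple gen low char - image of f is separable in t} (with the same choice of $C(N)$) to get non-vanishing, and bound $\deg P$ by the same degree estimates on $\tilde f$. No gaps.
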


\begin{proof}
Let $\tilde{f} \in \FF[x_1,\dots,x_{2d}], \tilde{f}:=\Psi_l(f)$. We prove this by applying Lemma~\ref{lem main - squarefree values of polynomial separable in t} to $\tilde{f}$. By Lemma~\ref{lem main - squarefree values of polynomial separable in t} there exists a polynomial $P \in \FF[x_1,\dots,x_{2d}]$ such that
\begin{multline}\label{equation simple gen polynomial P property proof 1}
\{\vec{\beta} \in \FF^{2d}: \tilde{f}(t,\beta_1,\dots,\beta_{2d}) \mbox{ is not separable}\} \\ \subseteq \{\vec{\beta} \in \FF^{2d}: P(\beta_1,\dots,\beta_{2d})=0\}.
\end{multline}
But
\begin{multline}\label{equation simple gen polynomial P property proof 2}
\tilde{f}(t,\beta_1,\dots,\beta_{2d})=f(t,\av_1(t)\beta_1+\bv_1(t)\beta_{d+1}+\cv_1(t),\dots\\,\av_d(t)\beta_d+\bv_d(t)\beta_{2d}+\cv_d(t)).
\end{multline}
%\begin{multline}
%\{\vec{\beta} \in \FF^{2d}: \tilde{f}(t,\beta_1,\dots,\beta_{2d}) \mbox{ is not separable}\}=\\ \{\vec{\beta} \in \FF^{2d}: f(t,\av_1(t)x_1+\bv_1(t)x_{d+1}+\cv_1(t),\dots\\,\av_d(t)x_d+\bv_d(t)x_{2d}+\cv_d(t)) \mbox{ is not separable}\}
%\end{multline}
From Eq.~\ref{equation simple gen polynomial P property proof 1} and Eq.~\ref{equation simple gen polynomial P property proof 2} it follows that Eq.~\ref{equation simple gen polynomial P property} holds as required.

Let $C(N)$ be such that $C(N) \geq \|(\bv_1,\av_1,\cv_1,\dots,\av_d,\bv_d,\cv_d)\|\deg_{\vec{x}}f+\deg_t f$. Suppose at least one of (\ref{part thm simple gen - albgebraic - single variable perturbation}) or (\ref{part thm simple gen - albgebraic - two variables perturbation}) in Theorem~\ref{thm simple gen - algebraic} holds. It follows that at least one of (\ref{item simple gen theorem - large char}) or (\ref{item simple gen theorem - arbitrary char}) in Lemma~\ref{lem simple gen low char - image of f is separable in t} holds. Hence by Lemma~\ref{lem simple gen low char - image of f is separable in t} $\tilde{f}$ is separable in $t$. Hence by Lemma~\ref{lem main - squarefree values of polynomial separable in t} $P$ is non-zero.

Now,
$$
\deg_t \tilde{f} \leq \|(\av_1,\bv_1,\cv_1,\dots,\av_d,\bv_d,\cv_d)\| \deg_{\vec{x}} f + \deg_t f
$$
and
$$
\deg_{\vec{x}} \tilde{f} \leq \deg_{\vec{x}} f.
$$

Assigning this into the bound which is given by Eq.~\ref{equation single variable main lemma pol bound} in Lemma~\ref{lem main - squarefree values of polynomial separable in t} we get
\begin{multline}
\deg P \leq 2(\deg_t \tilde{f}-1)\deg_{\vec{x}} \tilde{f} \leq
\\
(2(\|(\av_1,\bv_1,\cv_1,\dots,\av_d,\bv_d,\cv_d)\|\deg_{\vec{x}} f + \deg_t f)-1)\deg_{\vec{x}} f.
\end{multline}

Let $\tilde{C}(N)$ be such that $\tilde{C}(N) \geq (2(\|(\av_1,\bv_1,\cv_1,\dots,\av_d,\bv_d,\cv_d)\|\deg_{\vec{x}} f + \deg_t f)-1)\deg_{\vec{x}} f$, then $\deg P \leq \tilde{C}(N)$. Since $P$ depends on $\tilde{f}$, and hence on $\av_1,\bv_1,\cv_1,\dots,\av_d,\bv_d,\cv_d$ and $f$, we may denote it by $\multP{\av}{\bv}{\cv}$.
\end{proof}

We now show that Theorem~\ref{thm simple gen perturbations of a polynomial finit field} follows from Theorem~\ref{thm simple gen - algebraic} in the case where $\FF$ is a finite field.

\begin{proof}
First, over a finite field the requirement in Theorem~\ref{thm simple gen - algebraic} that $f$ is \sqf{} in $\polring{\overline{\FF}[t]}{d}{x}$ can be replaced by the requirement that $f$ is \sqf{}. Over a finite field the two requirements are equivalent by Corollary~\ref{cor square-free equals absolutely square-free}, since $\fq$ is a perfect field. Likewise, by the same corollary over a finite field $f(t,\av_1(t)\beta_1+\bv_1(t)\beta_{d+1}+\cv_1(t),\dots,\av_d(t)\beta_d+\bv_d(t)\beta_{2d}+\cv_d(t))$ being separable and $f(t,\av_1(t)\beta_1+\bv_1(t)\beta_{d+1}+\cv_1(t),\dots,\av_d(t)\beta_d+\bv_d(t)\beta_{2d}+\cv_d(t))$ being \sqf{} can be used interchangeably, since the two are equivalent over a perfect field.

With the assumptions and definitions as in Theorem~\ref{thm simple gen perturbations of a polynomial finit field}, we now show why the estimate in Eq.~\ref{equation simiple gen main theorem estimate} of Theorem~\ref{thm simple gen perturbations of a polynomial finit field} follows. The error $E(q)$ of estimating
$$\frac{\#(\squaref{\fq}{d}{f} \bigcap \persimple{\fq}{d}{\av}{\bv}{\cv})}{\#\persimple{\fq}{d}{\av}{\bv}{\cv}}$$
by $1$ is given by
\begin{equation}
\begin{split}
E(q) =\frac{\#(\squaref{\fq}{d}{f}^c \bigcap \persimple{\fq}{d}{\av}{\bv}{\cv})}{\#\persimple{\fq}{d}{\av}{\bv}{\cv}}= \\\frac{1}{q^{2d}}\#\{\vec{\beta} \in \fq^{2d} : f(t,\av_1(t)\beta_1+\bv_1(t)\beta_{d+1}+\cv_1(t),\dots\\,\av_d(t)\beta_d+\bv_d(t)\beta_{2d}+\cv_d(t)) \mbox{ is not \sqf{}}\} \leq\\
\frac{\#\{\vec{\beta} \in \fq^{2d} : \multP{\av}{\bv}{\cv}(\beta_1,\dots,\beta_{2d})=0\}}{q^{2d}}.
\end{split}
\end{equation}

Assume first that $\av_1,\bv_1,\cv_1,\dots,\av_d,\bv_d,\cv_d$ are such that $\multP{\av}{\bv}{\cv}$ is nonzero. Then applying Lemma~\ref{lem elementary bound on number of zeros}
$$
E(q) \leq \frac{\tilde{C}(N)q^{2d-1}}{q^{2d}}.
$$
Hence keeping $N$ fixed while $q \to \infty$, $E(q)=O \left (\frac{1}{q} \right )$.

It remains to show why it follows from the assumptions of Theorem~\ref{thm simple gen perturbations of a polynomial finit field} that $\multP{\av}{\bv}{\cv}$ is non-zero. The latter is true since we assume at least one of (\ref{part simple gen theorem large char}) and (\ref{part simple gen theorem arbitrary char}) in Theorem~\ref{thm simple gen perturbations of a polynomial finit field} holds, but by letting $C(N)$ in Theorem~\ref{thm simple gen perturbations of a polynomial finit field} be the same as $C(N)$ in Theorem~\ref{thm simple gen - algebraic}, (\ref{part simple gen theorem large char}) and (\ref{part simple gen theorem arbitrary char}) in Theorem~\ref{thm simple gen perturbations of a polynomial finit field} are the same as (\ref{part thm simple gen - albgebraic - single variable perturbation}) and (\ref{part thm simple gen - albgebraic - two variables perturbation}) in Theorem~\ref{thm simple gen - algebraic}.
\end{proof}

% below was moved to the main results section
%\begin{cor}[\sqf{} values of multivariate polynomials over a finite filed]\label{cor sqf values in of monic polyomials}

%Let $f \in \polring{\fq[t]}{d}{x}$ be a \sqf{} polynomial. Let $m_1,\dots,m_d \in \N$. Let $N \in \N$. Assume $\deg_{\vec{x}} f,\deg_t f, m_1,\dots,m_d \leq N$. Then while $N$ remains fixed, the following holds:

%\begin{equation}\label{equation main theorem single variable estimate}
%   \frac{\#(\squaref{\fq}{d}{f} \bigcap (\monic_{m_1} \times \dots\times \monic_{m_d}))}{\#\monic_{m_1} \times \dots\times \monic_{m_d}}= 1 + O \left (\frac{1}{q} \right )\;,\quad \mbox{as } q\to \infty
%\end{equation}
%In particular, if $q$ is sufficiently large with respect to $N$ there exist $\av \in \monic_{m_1},\dots,a_d \in \monic_{m_d}$ such that $f(t,\av(t),\dots,a_d(t))$ is \sqf{}.
%\end{cor}
\subsection{Proof of Corollary~\ref{cor sqf values in of monic polyomials}}

We now show that Corollary~\ref{cor sqf values in of monic polyomials} which was stated in Section~\ref{subsection sqf values of a multivariate polynomial} follows from Theorem~\ref{thm simple gen perturbations of a polynomial finit field}. For each choice of $\cv_1,\dots,\cv_d$ where $\cv_1,\dots,\cv_d$ are monic with the first two coefficients zero, we use Theorem~\ref{thm simple gen perturbations of a polynomial finit field} to estimate the number of \sqf{} values of $f$ which are obtained by perturbing the first two coefficients of each of $\cv_1,\dots,\cv_d$. By summing over all possible choices for $\cv_1,\dots,\cv_d$, we get the result which is stated in Corollary~\ref{cor sqf values in of monic polyomials}. We now show this in more details.

\begin{proof}
For each $\btwn{i}{1}{d}$ let $C_i$ be the set of monic polynomials of degree $m_i$ with the first two coefficients zero. Explicitly,
$$C_i = \left \{\cv \in \fq[t]: \cv(t) = t^{m_i}+\sum_{j=2}^{m_i-1}a_i t^j, a_2\dots,a_{m_i-1} \in \fq\right \}.
$$

%Let $\cv_1,\dots,\cv_d \in \FF[t]$ such that for each $\btwn{i}{1}{d}$ $\cv_i = \sum_{j=2}^{m_i}$ . Then

Let $\cv_1 \in C_1,\dots,\cv_d \in C_d$, and for each $\btwn{i}{1}{d}$ let $\av_i=t,\bv_i=1$. Then
\begin{multline}\persimple{\fq}{d}{\av}{\bv}{\cv}=\\
\{(t\beta_{1}+\beta_{d+1}+\cv_1(t),\dots, t\beta_d+\beta_{2d}+\cv_{d}(t)) \in \FF[t]^d : \beta_1,\dots,\beta_{2d} \in \fq\}.
\end{multline}

%For each choice of $\vec{\cv} \in \FF[t]^d$ where $\cv_1 \in \cv_1,\dots,\cv_d \in C_d$ $\persimple{\fq}{d}{\av}{\bv}{\cv}$ is the set of polynomials obtained by perturbing the first two coefficients of $\cv_1,\dots,\cv_d$. Going over all possible choices of $\vec{\cv}$ we obtain all the polynomials in $\monic_1 \times, \dots,\times \monic_d$ where each $\monic_{m_1}\times \dots \times \monic_{m_d}$ is an element of $\persimple{\fq}{d}{\av}{\bv}{\cv}$ for exactly one choice of $\vec{\cv}$.

For each choice of $\vec{\cv} \in \FF[t]^d$ where $\cv_1 \in C_1,\dots,\cv_d \in C_d$, $\persimple{\fq}{d}{\av}{\bv}{\cv}$ is the set of polynomials obtained by perturbing the first two coefficients of $\cv_1,\dots,\cv_d$. Each element of $\monic_{m_1}\times \dots \times \monic_{m_d}$ is an element of $\persimple{\fq}{d}{\av}{\bv}{\cv}$ for exactly one choice of $\vec{\cv}$. Stating this differently
$$
\monic_{m_1}\times \dots \times \monic_{m_d} = \bigcup_{\cv_1 \in C_1,\dots,\cv_d \in C_d} \persimple{\fq}{d}{\av}{\bv}{\cv}.
$$
where the union is disjoint. Hence
$$
\#((\monic_{m_1}\times \dots \times \monic_{m_d}) \bigcap \squaref{\fq}{d}{f}) = \sum_{\cv_1 \in C_1,\dots,\cv_d \in C_d} \# (\persimple{\fq}{d}{\av}{\bv}{\cv} \bigcap \squaref{\fq}{d}{f}).
$$
The size of each $C_i$ is $\#C_i = q^{m_i-2}$. Hence the number of elements in the last sum is $q^{m-2d}$ where $m=m_1+\dots+m_d$.
%\squaref{\fq}{d}{f}\bigcap \monic_{m_1} \times \dots \times \monic_{m_d} \times \dots \times \monic_{m_d}=\sum_{\cv_1 \in C_1,\dots,\cv_d \in C_d} \squaref{\fq}{d}{f} \bigcap \persimple{\fq}{d}{\av}{\bv}{\cv_i}
Thus
\begin{multline}
\frac{\#(\squaref{\fq}{d}{f} \bigcap (\monic_{m_1} \times \dots \times \monic_{m_d}))}{\#\monic_{m_1} \times \dots \times \monic_{m_d}} =\\
 \sum_{\cv_1 \in C_1,\dots,\cv_d \in C_d} \frac{\#(\squaref{\fq}{d}{f} \bigcap \persimple{\fq}{d}{\av}{\bv}{\cv})}{\#\monic_{m_1} \times \dots \times \monic_{m_d}}=\\
\frac{1}{q^{m-2d}} \sum_{\cv_1 \in C_1,\dots,\cv_d \in C_d} \frac{\#(\squaref{\fq}{d}{f} \bigcap \persimple{\fq}{d}{\av}{\bv}{\cv})}{q^{2d}} = \frac{1}{q^{m-2d}}q^{m-2d}\left (1+O \left (\frac{1}{q} \right )\right),
\end{multline}
where the last equality follows by Theorem~\ref{thm simple gen perturbations of a polynomial finit field}
\end{proof}

\appendix

\section{Proof of Theorem~\ref{thm absolutely square-free equivalent conditions} and its corollaries}

\begin{lem}\label{lem prime ideal generator}
Let $\FF$ be a field, and $L \supseteq \FF$ an algebraic field extension of $\FF$. Let $h \in \polring{L}{d}{x}$ be nonconstant and irreducible in $\polring{L}{d}{x}$. Then there exists $h_m \in \polring{\FF}{d}{x}$ such that:
\begin{enumerate}
  \item\label{item h divides hm} $h | h_m$.
  \item\label{item hm divides any f} If $f \in \polring{\FF}{d}{x}$ and $h|f$ then $h_m$ divides $f$ in $\polring{\FF}{d}{x}$.
\end{enumerate}
\end{lem}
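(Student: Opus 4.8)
The plan is to obtain $h_m$ as a generator of the ideal
$$I := \left\{ f \in \polring{\FF}{d}{x} : h \mid f \text{ in } \polring{L}{d}{x} \right\},$$
the contraction to $\polring{\FF}{d}{x}$ of the principal ideal $(h)\subseteq\polring{L}{d}{x}$. Once we know that $I$ is a nonzero, proper, principal ideal, say $I=(h_m)$, both conclusions are immediate: $h_m\in I$ gives $h\mid h_m$ in $\polring{L}{d}{x}$, which is \ref{item h divides hm}; and if $f\in\polring{\FF}{d}{x}$ satisfies $h\mid f$ in $\polring{L}{d}{x}$ then $f\in I=(h_m)$, i.e.\ $h_m$ divides $f$ in $\polring{\FF}{d}{x}$, which is \ref{item hm divides any f}. (One also gets for free that $h_m$ is nonconstant and irreducible over $\FF$, since $I$ is a proper prime ideal.)

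For the easy preliminary steps: $I$ is visibly an ideal; it is prime because $h$ is irreducible, hence prime, in the UFD $\polring{L}{d}{x}$ and primality passes to contractions; and it is proper because $h$ is nonconstant, so $1\notin I$. To see $I\neq(0)$, note that since $L/\FF$ is algebraic the ring $\polring{L}{d}{x}$ is integral over $\polring{\FF}{d}{x}$; choosing a monic relation $h^{n}+c_{n-1}h^{n-1}+\dots+c_{0}=0$ with $c_{i}\in\polring{\FF}{d}{x}$ and $n$ minimal forces $c_{0}\neq 0$, and then $c_{0}=-h\bigl(h^{n-1}+\dots+c_{1}\bigr)$ is a nonzero element of $\polring{\FF}{d}{x}$ divisible by $h$, so $c_{0}\in I\setminus\{0\}$.

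It remains to show $I$ is principal, which is the heart of the matter. I would argue elementarily: choose $h_m\in I$ nonzero of least total degree, and replace it by one of its irreducible factors in $\polring{\FF}{d}{x}$ — which again lies in $I$ because $I$ is prime and has degree at most that of $h_m$, hence equal to it by minimality — so we may assume $h_m$ is irreducible over $\FF$. Now take any $f\in I$. If $h_m\nmid f$ in $\polring{\FF}{d}{x}$, then $\gcd(h_m,f)=1$ in $\polring{\FF}{d}{x}$ since $h_m$ is prime, and by the standard fact that the gcd of polynomials is unchanged (up to units) under extension of the coefficient field, also $\gcd(h_m,f)=1$ in $\polring{L}{d}{x}$; but $h\mid h_m$ and $h\mid f$ in the UFD $\polring{L}{d}{x}$, so $h$ divides $\gcd(h_m,f)=1$ there, contradicting that $h$ is nonconstant. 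Hence $h_m\mid f$ in $\polring{\FF}{d}{x}$, so $I=(h_m)$.

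The main obstacle is thus isolating and citing correctly the two classical inputs used above — integrality of $\polring{L}{d}{x}$ over $\polring{\FF}{d}{x}$ when $L/\FF$ is algebraic, and stability of gcd's under extension of the base field. Alternatively, one can package the whole argument in commutative-algebra language: $I$ is the contraction of the height-one prime $(h)$ along the integral extension $\polring{\FF}{d}{x}\hookrightarrow\polring{L}{d}{x}$ of the Noetherian integrally closed domain $\polring{\FF}{d}{x}$, hence, by Lying-Over, Incomparability and Going-Down, $I$ is itself a height-one prime of the UFD $\polring{\FF}{d}{x}$, therefore principal.
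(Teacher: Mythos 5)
Your proof is correct, but it takes a genuinely different route from the paper's. The paper first settles $d=1$ by hand: it picks a root $\alpha\in\overline{\FF}$ of $h$, uses algebraicity of $\alpha$ over $\FF$ to produce a nonzero $g\in\FF[x_1]$ in $\langle h\rangle\cap\FF[x_1]$, and invokes the PID structure of $\FF[x_1]$ for principality; it then reduces $d>1$ to $d=1$ by passing to $\tilde{\FF}=\FF(x_2,\dots,x_d)$ and descending divisibility back to $\polring{\FF}{d}{x}$ via content/primitivity and Gauss's lemma. You instead work with the contraction ideal $I=(h)\cap\polring{\FF}{d}{x}$ uniformly in all variables: your integrality argument for $I\neq(0)$ (minimal monic relation, $c_0\in I\setminus\{0\}$) is cleaner and more conceptual than the paper's root-adjunction, and it makes the role of the hypothesis that $L/\FF$ is algebraic transparent; your Going-Down packaging is likewise a legitimate one-line alternative. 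The one place where you should be careful is the ``standard fact'' that coprimality in $\polring{\FF}{d}{x}$ persists in $\polring{L}{d}{x}$ for $d>1$: this is true, but it is not a formal consequence of the univariate Euclidean/Bezout argument, and it carries essentially the same content as the lemma itself (an irreducible $h\in\polring{L}{d}{x}$ dividing two coprime elements of $\polring{\FF}{d}{x}$ is exactly the situation both statements rule out). So you have not removed the difficulty, only relocated it into a citation; to make the proof self-contained you would need to justify that fact, e.g.\ by noting that $\polring{L}{d}{x}=L\otimes_{\FF}\polring{\FF}{d}{x}$ is a free, hence flat, $\polring{\FF}{d}{x}$-module, so $(f)\cap(g)$ is generated by $\operatorname{lcm}(f,g)$ after base change and gcd's are preserved up to units, or by an induction on $d$ through $\FF(x_2,\dots,x_d)[x_1]$ — which is precisely the Gauss's-lemma reduction the paper performs. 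With either justification supplied, your argument is complete, and arguably more transparent than the original.
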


\begin{remark}\label{remark actual indeterminates in hm}
For any $i$, $1 \leq i \leq d$, $\deg_{x_i} h_m > 0$ implies $\deg_{x_i} h > 0$. To show this, let $h_m$ which existence is provided by Lemma~\ref{lem prime ideal generator} applied to $h$ as an element in $L[x_1,\dots,x_d]$. By applying Lemma~\ref{lem prime ideal generator} to $h$ as an element in $L[x_{i_1},\dots,x_{i_k}]$ where $x_{i_1},\dots,x_{i_k}$ are the variables that appear in $h$ we conclude that there exists $\hat{h}_m$ in $\FF[x_{i_1},\dots,x_{i_k}]$ which is divided by $h_m$. Hence the variables that appear in $h_m$ are in the set $\{x_{i_1},\dots,x_{i_k}\}$.
\end{remark}

\begin{proof}
In other words the lemma asserts that $\langle h \rangle \cap \polring{\FF}{d}{x}$ is a nonempty principal ideal in $\polring{\FF}{d}{x}$. We prove first the case where $d=1$.
Let $\alpha \in \overline{\FF}$ be a root of $h$. Since $\alpha$ is algebraic over $\FF$ there exists $g \in \FF[x_1]$ such that $g(\alpha)=0$. Since $L[x_1]$ is a principal ideal domain, there exists $r \in L[x_1]$ such that $\langle r \rangle =\langle h,g \rangle$. Also, $1 \not \in \langle r \rangle$ since $\alpha$ is a root of every polynomial in $\langle r\rangle$. Hence $r$ is not invertible. But $h$ is irreducible, hence $h|r$, and $\langle h \rangle = \langle r \rangle = \langle h,g \rangle$. Thus $g \in \langle h \rangle \cap \FF[x_1]$ and so $\langle h \rangle \cap \FF[x_1]$ is not empty.
In addition, $\langle h \rangle \cap \FF[x_1]$ is a principal ideal since $\FF[x_1]$ is a principal ideal domain. Hence the lemma follows for the case $d=1$.

Now consider the case where $d>1$. Since $h$ is nonconstant, we can assume without loss of generality that $\deg_{x_1} h > 0$. Denote
$$\tilde{\FF}=\FF(x_2,\dots,x_d),\tilde{L}=L(x_2,\dots,x_d),D=\FF[x_2,\dots,x_d],D_L:=L[x_2,\dots,x_d].$$
Then $\tilde{\FF}$ and $\tilde{L}$ are fields and $\tilde{L}$ is an algebraic extension of $\tilde{\FF}$. $h$ is irreducible also in $\tilde{L}[x_1]$ by Gauss's lemma for polynomials. It follows from the case $d=1$ that there exists $\tilde{h_m} \in \tilde{\FF}[x_1]$ such that
\begin{itemize}
\item$h|\tilde{h_m}$.
\item For any $f \in \polring{\FF}{d}{x} \subseteq \tilde{\FF}[x_1]$ such that $h|f$, $f=\tilde{h_m}\tilde{u}$ for some $\tilde{u} \in \tilde{\FF}[x_1]$.
\end{itemize}

We first show part~\ref{item hm divides any f} of the lemma. $\tilde{\FF}$ is the field of fractions of $D$. Hence we can write $\tilde{h_m}=c_{h_m} h_m$ and $\tilde{u}= c_u u$, where $c_{h_m},c_u \in \tilde{\FF}$ and $h_m, u$ are primitive polynomials in $D[x_1]$. Then
\begin{equation}\label{equation primitive polynomials multiplication}
f=\tilde{h_m}\tilde{u}=c_{h_m} h_m c_u u = c_{h_m}c_uh_mu.
\end{equation}
By Gauss's lemma for polynomials a multiplication of primitive polynomials is a primitive polynomial. Hence $h_m u$ is a primitive polynomial. Hence by Eq.~\ref{equation primitive polynomials multiplication} $c_{h_m}c_u \in D = \FF[x_2,\dots,x_d]$ or otherwise $f$ would not be a polynomial in $D[x_1]$ but a rational function. Hence by Eq.~\ref{equation primitive polynomials multiplication} $h_m$ divides $f$ in $\FF[x_1,\dots,x_d]$.

To show part \ref{item h divides hm} of the lemma, there exists $\tilde{v} \in \tilde{L}[x_1]$ such that $h\tilde{v}=\tilde{h_m}$. We can write $\tilde{v}=c_vv$ where $c_v \in \tilde{L}$ and $v \in D_L[x_1]$ is primitive. Hence $h\tilde{v}=hvc_v=h_mc_{h_m}=\tilde{h_m}$. Hence
\begin{equation}\label{equation hm existance}
hv\frac{c_v}{c_{h_m}}=h_m.
\end{equation}
But $h$ is irreducible and nonconstant in $D_{L}[x_1]$ and in particular primitive, and $v$ is primitive. Hence $hv$ is primitive by Gauss's lemma for polynomials. Hence $\frac{c_v}{c_{h_m}} \in D_L$ or otherwise by Eq.~\ref{equation hm existance} $h_m$ would not be in $D[x_1]$. Hence by Eq.~\ref{equation hm existance} $h$ divides $h_m$ in $\polring{L}{d}{x}$.

\end{proof}

\begin{lem}\label{lem square divisor in extension devides one of the facotrs}
Let $\FF$ be a field, and let $L \supseteq \FF$ be an algebraic field extension of $\FF$. Let $f \in \polring{\FF}{d}{x}$ be a polynomial which is square-free in $\polring{\FF}{d}{x}$. Let $h \in \polring{L}{d}{x}$ be an irreducible polynomial. Suppose $h^2 | f$. Let a factorization of $f$ be $f=\prod_{i=1}^{k} f_i$, where $f_1,\dots,f_k \in \polring{\FF}{d}{x}$ are irreducible as elements in $\polring{\FF}{d}{x}$. Then $h^2 | f_j$ for some $j$, $1 \leq j \leq k$. Also, any variable that appears in $f_j$ appears in $h$.
\end{lem}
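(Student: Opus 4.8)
The plan is to combine the principal–ideal lemma (Lemma~\ref{lem prime ideal generator}), the variable–tracking observation in Remark~\ref{remark actual indeterminates in hm}, and the characterization of \sqf{} elements of a UFD recalled in Section~\ref{section Definitions and notations} (the irreducible factors of a \sqf{} element are pairwise non-associate). The whole argument takes place inside the two UFDs $\polring{\FF}{d}{x}$ and $\polring{L}{d}{x}$.

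First I would note that, being irreducible in $\polring{L}{d}{x}$, the polynomial $h$ is in particular nonzero and nonconstant, so Lemma~\ref{lem prime ideal generator} applies to it: there is a nonconstant $h_m \in \polring{\FF}{d}{x}$ with $h \mid h_m$, and such that whenever $f' \in \polring{\FF}{d}{x}$ satisfies $h \mid f'$ one has $h_m \mid f'$ in $\polring{\FF}{d}{x}$. Since $h \mid f = \prod_{i=1}^k f_i$ and $h$ is prime in the UFD $\polring{L}{d}{x}$, we get $h \mid f_j$ for some $j$, and then the defining property of $h_m$ gives $h_m \mid f_j$ in $\polring{\FF}{d}{x}$. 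As $h_m$ is nonconstant it is not a unit, while $f_j$ is irreducible in $\polring{\FF}{d}{x}$; hence $f_j$ and $h_m$ are associates there, say $f_j = \alpha h_m$ with $\alpha$ a unit of $\FF$.

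Next I would show that $h$ divides none of the remaining factors $f_i$, $i \neq j$. Indeed, if $h \mid f_i$ for some $i \neq j$, then $h_m \mid f_i$ by the same property of $h_m$, hence $f_j$ (an associate of $h_m$) divides $f_i$; since $f_i$ is irreducible and $f_j$ is a non-unit, $f_i$ and $f_j$ would be associates, contradicting the assumption that $f$ is \sqf{} in $\polring{\FF}{d}{x}$. Therefore $h \nmid f_i$ for every $i \neq j$. Because $h$ is prime in $\polring{L}{d}{x}$, the exact power of $h$ dividing $f = \prod_i f_i$ coincides with the exact power of $h$ dividing $f_j$; combined with $h^2 \mid f$, this yields $h^2 \mid f_j$.

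Finally, for the assertion about variables: since $f_j = \alpha h_m$ with $\alpha$ a unit of $\FF$, the variables occurring in $f_j$ are exactly those occurring in $h_m$, and by Remark~\ref{remark actual indeterminates in hm} every variable occurring in $h_m$ occurs in $h$; hence every variable appearing in $f_j$ appears in $h$. The one point that deserves care is the multiplicity count in the previous paragraph: it rests on $h$ being prime in $\polring{L}{d}{x}$ together with the exclusion of $h \mid f_i$ for $i \neq j$, and it is precisely here that \sqf{}-ness of $f$ over $\FF$ (rather than over $L$) is used; the rest is bookkeeping with Lemma~\ref{lem prime ideal generator} and Gauss's lemma.
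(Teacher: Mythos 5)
Your proof is correct and follows essentially the same route as the paper: reduce to $h\mid f_j$ by primality of $h$ in $\polring{L}{d}{x}$, invoke Lemma~\ref{lem prime ideal generator} to produce $h_m\in\polring{\FF}{d}{x}$ and use \sqf{}-ness of $f$ over $\FF$ to rule out $h$ dividing any other factor, then conclude $h^2\mid f_j$ and read off the variable claim from Remark~\ref{remark actual indeterminates in hm}. The only cosmetic difference is that the paper derives the contradiction from $h_m\mid f_j$ and $h_m\mid f/f_j$ simultaneously, whereas you phrase it via $f_i$ and $f_j$ being associates; these are the same argument.
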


\begin{proof}
$\polring{L}{d}{x}$ is a unique factorization domain. Since $h$ is a prime element in $\polring{L}{d}{x}$ and $h | f$, it follows that $h|f_j$ for some $1 \leq j \leq k$. Suppose on the contrary that $h|\frac{f}{f_j}$. By Lemma~\ref{lem prime ideal generator} there exists $h_m \in \polring{\FF}{d}{x}$ such that $h_m|f_j$ and $h_m|\frac{f}{f_j}$. But the last conclusion is a contradiction to the assumption that $f$ is \sqf{} in $\polring{\FF}{d}{x}$. Hence $h^2$ and $\frac{f}{f_j}$ are co-prime, and $h^2 | f_j$. To see that every variable that appears in $f_j$ appears in $h$, as stated in Remark~\ref{remark actual indeterminates in hm} every variable that appears in $h_m$ appears in $h$. Since $f_j$ is irreducible, hence $f_j | h_m$, every variable that appears in $f_j$ appears in $h_m$.
\end{proof}

We give the following lemma without a proof, only for reference.
\begin{lem} \label{non separable irreducible polynomial characterization}
Let $\FF$ be a field. Let $f \in \FF[x]$ be an irreducible polynomial.
\begin{enumerate}
\item \label{item separable in zero characteristic} If $\chr(\FF)=0$, then $f$ is separable.
\item \label{item separable in positive characteristic} If $\FF$ is of positive characteristic $p$, and $f$ is non-separable then $f \in \FF[x^p]$.
\end{enumerate}
\end{lem}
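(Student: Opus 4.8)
The plan is to prove both parts at once via the standard derivative criterion for repeated roots, using only that $\FF[x]$ is a Euclidean domain. First I would recall the key facts: a polynomial $g\in\FF[x]$ is separable (has distinct roots in $\overline{\FF}$) if and only if $\gcd(g,g')=1$ in $\FF[x]$, where $g'$ is the formal derivative; and the gcd produced by the Euclidean algorithm is unchanged under extension of the base field, so this test may be carried out over $\FF$ itself rather than over $\overline{\FF}$. Thus "$f$ is non-separable'' is equivalent to "$\gcd(f,f')\neq 1$ in $\FF[x]$''.

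Next I would invoke irreducibility. Since $\gcd(f,f')$ divides $f$ and $f$ is irreducible, $\gcd(f,f')$ is, up to a unit of $\FF$, equal either to $1$ or to $f$. In the second case $f\mid f'$; but $f$ is nonconstant (irreducible polynomials are nonconstant), so $\deg f'<\deg f$, and $f\mid f'$ is then possible only if $f'=0$. Hence $f$ is non-separable if and only if $f'=0$.

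For part \eqref{item separable in zero characteristic}: when $\chr(\FF)=0$, writing $f=\sum_{i=0}^{n}a_ix^i$ with $a_n\neq 0$ and $n\geq 1$, the top term of $f'$ is $na_nx^{n-1}\neq 0$, so $f'\neq 0$; by the previous paragraph $f$ is separable. For part \eqref{item separable in positive characteristic}: when $\chr(\FF)=p>0$ and $f$ is non-separable, we have $f'=0$, so writing $f=\sum_i a_ix^i$ gives $\sum_i ia_ix^{i-1}=0$, forcing $ia_i=0$ in $\FF$, hence $a_i=0$ whenever $p\nmid i$. Only exponents divisible by $p$ remain, and regrouping $f=\sum_j a_{pj}(x^p)^j$ exhibits $f$ as a polynomial in $x^p$, i.e. $f\in\FF[x^p]$.

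There is no substantial obstacle: the single point meriting a word of care is the invariance of separability (equivalently, of $\gcd(f,f')$) under passage to $\overline{\FF}$, which is exactly what lets one detect non-separability by the vanishing of $f'$ over $\FF$; the remainder is a direct computation with the formal derivative.
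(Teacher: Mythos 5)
Your argument is correct and complete: the derivative criterion, combined with irreducibility forcing $\gcd(f,f')$ to be a unit or an associate of $f$, and the degree comparison forcing $f'=0$ in the latter case, is exactly the standard proof. The paper itself does not prove this lemma but cites Dummit and Foote (Chapter 13) for both parts, and your argument is precisely the one given there, so there is nothing further to reconcile.
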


For a proof of part~\ref{item separable in zero characteristic} of the lemma above see Corollary 34 in chapter 13 of \cite{Dummit and Foote}. As to part~\ref{item separable in positive characteristic}, in fact, a stronger statement holds which is that there exists a unique $k \geq 0$ such that $f=f_{sep}(x^{p^k})$ where $f_{sep} \in \FF[x]$ is a separable polynomial. For a proof see proposition 38 in chapter 13 of \cite{Dummit and Foote}. For our usage the weaker statement in Lemma~\ref{non separable irreducible polynomial characterization} appears to be sufficient.

\begin{lem}\label{lem absolutely square free is squarefree in char zero}
Let $\FF$ be a field of characteristic $0$. Let $f \in \polring{\FF}{d}{x}$ be a polynomial. Then $f$ is \sqf{} in $\polring{\overline{\FF}}{d}{x}$ if and only if $f$ is \sqf{} in $\polring{\FF}{d}{x}$.
\end{lem}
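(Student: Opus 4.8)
The plan is to treat the two directions separately, the only nontrivial one being that square-freeness in $\polring{\FF}{d}{x}$ forces square-freeness in $\polring{\overline{\FF}}{d}{x}$, and the characteristic hypothesis will enter exactly once, through Lemma~\ref{non separable irreducible polynomial characterization}(\ref{item separable in zero characteristic}).

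For the easy implication, suppose $f$ is not square-free in $\polring{\FF}{d}{x}$, so $s^{2}\mid f$ for some non-invertible $s\in\polring{\FF}{d}{x}$. Either $s=0$, in which case $f=0$ and $f$ is not square-free in $\polring{\overline{\FF}}{d}{x}$ either, or $\deg s\geq 1$, in which case $s$ is still non-invertible in $\polring{\overline{\FF}}{d}{x}$ and witnesses that $f$ is not square-free there. Thus square-freeness over $\overline{\FF}$ implies square-freeness over $\FF$, with no hypothesis on $\chr(\FF)$.

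For the converse, I would assume $f$ is square-free in $\polring{\FF}{d}{x}$ but, for contradiction, not in $\polring{\overline{\FF}}{d}{x}$, and fix an irreducible $h\in\polring{\overline{\FF}}{d}{x}$ with $h^{2}\mid f$. Since $\overline{\FF}/\FF$ is algebraic, Lemma~\ref{lem square divisor in extension devides one of the facotrs} (applied with $L=\overline{\FF}$) produces an irreducible factor $f_{j}$ of $f$ in $\polring{\FF}{d}{x}$ with $h^{2}\mid f_{j}$ such that every variable occurring in $f_{j}$ also occurs in $h$. As $f_{j}$ is irreducible it is nonconstant, so after renumbering the variables I may assume $x_{1}$ occurs in $f_{j}$, hence also in $h$, giving $\deg_{x_{1}}f_{j}\geq 1$ and $\deg_{x_{1}}h\geq 1$. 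Viewing $f_{j}$ as an element of $(\FF[x_{2},\dots,x_{d}])[x_{1}]$, it is primitive over the UFD $\FF[x_{2},\dots,x_{d}]$ (a nonconstant content would give a nontrivial factorization), so by Gauss's lemma it is irreducible in $K[x_{1}]$ where $K:=\FF(x_{2},\dots,x_{d})$. Since $\chr(K)=0$, Lemma~\ref{non separable irreducible polynomial characterization}(\ref{item separable in zero characteristic}) shows that $f_{j}$ is separable over $K$, i.e.\ its roots in an algebraic closure $\overline{K}$ of $K$ are pairwise distinct.

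It then suffices to exhibit a repeated root of $f_{j}$ in $\overline{K}$. Passing from $\polring{\overline{\FF}}{d}{x}$ to its localization $\overline{\FF}(x_{2},\dots,x_{d})[x_{1}]$, the relation $h^{2}\mid f_{j}$ still holds and $h$ is a non-unit there because $\deg_{x_{1}}h\geq 1$. The field $\overline{\FF}(x_{2},\dots,x_{d})$ is algebraic over $K$ (any element is a ratio of polynomials whose finitely many coefficients generate a finite extension of $\FF$), so it admits a $K$-embedding $\iota$ into $\overline{K}$; extending $\iota$ coefficientwise yields a ring homomorphism $\overline{\FF}(x_{2},\dots,x_{d})[x_{1}]\to\overline{K}[x_{1}]$ that fixes $x_{1}$ and $K[x_{1}]$. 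Then $\iota(h)^{2}\mid\iota(f_{j})=f_{j}$ in $\overline{K}[x_{1}]$ and $\deg_{x_{1}}\iota(h)=\deg_{x_{1}}h\geq 1$ (the leading coefficient in $x_{1}$ is sent to a nonzero element), so any root of $\iota(h)$ in $\overline{K}$ is a root of $f_{j}$ of multiplicity at least $2$, contradicting separability. This contradiction finishes the proof. The one step I expect to need care is this last passage: one must check that the multivariate divisibility $h^{2}\mid f_{j}$ can legitimately be turned into an honest repeated root of $f_{j}$ in $\overline{K}$ --- that the embedding $\iota$ exists (which is why one needs $\overline{\FF}(x_{2},\dots,x_{d})/K$ to be algebraic) and that it does not collapse $\deg_{x_{1}}h$; everything else, including the reduction to the irreducible factor $f_{j}$ via Lemma~\ref{lem square divisor in extension devides one of the facotrs}, Gauss's lemma, and primitivity, is routine.
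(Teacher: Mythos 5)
Your proof is correct and follows essentially the same route as the paper's: reduce via Lemma~\ref{lem square divisor in extension devides one of the facotrs} to an irreducible factor $f_j$ of $f$ over $\FF$ with $h^2\mid f_j$, view $f_j$ as a univariate polynomial over the characteristic-zero rational function field in the remaining variables, and contradict the separability of irreducible polynomials there (Lemma~\ref{non separable irreducible polynomial characterization}). Your explicit $K$-embedding $\iota$ of $\overline{\FF}(x_2,\dots,x_d)$ into $\overline{K}$ is just a more careful rendering of the paper's observation that the algebraic closure of $\overline{\FF}(x_1,\dots,x_{l-1},x_{l+1},\dots,x_d)$ is also an algebraic closure of $\FF(x_1,\dots,x_{l-1},x_{l+1},\dots,x_d)$.
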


\begin{proof}
On one direction, suppose $f$ has a nonconstant factor $g \in \polring{\FF}{d}{x}$ such that $g^2 | f$. Then that holds also when $g$ and $f$ are considered as elements of the larger domain $\polring{\overline{\FF}}{d}{x}$.

To see the opposite direction, suppose $f$ is \sqf{} in $\polring{\FF}{d}{x}$ and suppose on the contrary that there exists a non constant irreducible $h \in \polring{\overline{\FF}}{d}{x}$ such that $h^2$ divides $f$ in $\polring{\overline{\FF}}{d}{x}$. Let $f=\prod_{i=1}^{k} f_i$ be a factorization of $f$ where $f_i \in \polring{\FF}{d}{x}$ are irreducible polynomials in $\polring{\FF}{d}{x}$. Suppose $l \in \N$, $1 \leq l \leq d$ is such that $\deg_{x_l} h > 0$. By Lemma~\ref{lem prime ideal generator} there exists $j$, $1 \leq j \leq d$ such that $h^2$ divides $f_j$. But that means $f_j$ is not separable as polynomial in $x_l$. We now show this in more details.

Denote$$K_1=\FF(x_1,\dots,x_{l-1},x_{l+1},\dots,x_d),\;K_2=\overline{\FF}(x_1,\dots,x_{l-1},x_{l+1},\dots,x_d),$$
and denote by $\overline{K_2}$ the algebraic closure of $K_2$, which is also an algebraic closure of $K_1$. View $h$ as an element of $K_2[x_l]$ and view $f_j$ as an element of $K_1[x_l]$. Suppose $h=c\prod_{i=1}^{I}(x_l-
\alpha_i)$ where $c \in K_2$ and $\alpha_i \in \overline{K_2},\;1 \leq i \leq I$ is the factorization of $h$ into linear factors in $\overline{K_2}[x_l]$. Then in particular $(x_l-\alpha_1)^2$ divides $h^2$ and hence it divides $f_j$. Hence $f_j$ as polynomial in $K_1[x_l]$ is not separable.

But since $\chr(K_1)=0$, the last cannot hold by Lemma~\ref{non separable irreducible polynomial characterization} part~\ref{item separable in zero characteristic}. A contradiction which shows that such $h$ does not exist.
\end{proof}

We now prove Theorem~\ref{thm absolutely square-free equivalent conditions} and its two corollaries which were stated in Section~\ref{section Preliminary facts and results}.

\begin{proof}[Proof of Theorem~\ref{thm absolutely square-free equivalent conditions}]
(\ref{absolutely squarefree 1})$\Rightarrow$(\ref{absolutely squarefree 2}): Suppose $f$ is not square-free as an element in $\polring{\FF^{\frac{1}{p}}}{d}{x}$. Then there exists a non constant $h \in \polring{\FF^{\frac{1}{p}}}{d}{x}$ such that  $h^2|f$. Since $\polring{\FF^{\frac{1}{p}}}{d}{x} \subseteq \polring{\overline{\FF}}{d}{x}$, $h^2$ is a square factor of $f$ also as an element of $\polring{\overline{\FF}}{d}{x}$.
(\ref{absolutely squarefree 2})$\Rightarrow$(\ref{absolutely squarefree 3}): Assume $f$ is square-free as an element in $\polring{\FF^{\frac{1}{p}}}{d}{x}$. $\polring{\FF}{d}{x} \subseteq \polring{\FF^{\frac{1}{p}}}{d}{x}$. Hence by the same argument as in the previous part, it is immediate that $f$ is square-free as an element in $\polring{\FF}{d}{x}$. It remains to show that $f$ does not have an irreducible factor $g \in \polring{\FF}{d}{x^p}$. Suppose on the contrary there is such factor $g$. Then $g(x_1,\dots,x_d)=h(x_1^p,\dots,x_d^p)$ for some polynomial $h \in \polring{\FF}{d}{x}$

$$h(x_1,\dots,x_d)=\sum_{(e_1,\dots,e_d) \in \{0,\dots,n\}^d} c_{e_1,\dots,e_d}\prod_{j=1}^{d}x_j^{e_j}.
$$

Applying the Frobenius automorphism properties we get:
\begin{multline}\label{equation factorization by Frobenius properties}
g(x_1,\dots,x_d) =\\ \sum_{(e_1,\dots,e_d) \in \{0,\dots,d\}^n} c_{e_1,\dots,e_d}\prod_{j=1}^{d}x_{j}^{pe_j} = \left (\sum_{(e_1,\dots,e_d) \in \{0,\dots,n\}^d} c_{e_1,\dots,e_d}^{\frac{1}{p}}\prod_{j=1}^{d}x_{j}^{e_j}\right )^p
\end{multline}
Since $c_{e_1,\dots,e_d}^{\frac{1}{p}} \in \FF^{\frac{1}{p}}$ for any ${e_1,\dots,e_d}$
$$\sum_{(e_1,\dots,e_d) \in \{0,\dots,n\}^d} c_{e_1,\dots,e_d}^{\frac{1}{p}}\prod_{j=1}^{d}x_{j}^{e_j}$$
is a repeated factor of $f$ in $\polring{\FF^{\frac{1}{p}}}{d}{x}$, contradicting the assumption that $f$ is square-free as an element in $\polring{\FF^{\frac{1}{p}}}{d}{x}$.
(\ref{absolutely squarefree 3})$\Rightarrow$(\ref{absolutely squarefree 1}): Suppose $f$ is square-free as element in $\polring{\FF}{d}{x}$ but is not square-free as an element in $\polring{\overline{\FF}}{d}{x}$. Then there exists an irreducible nonconstant $h \in \polring{\overline{\FF}}{d}{x}$ such that $h^2 | f$. Let $f=\prod_{i=1}^{k} f_i$ be a factorization of $f$ where $f_i \in \polring{\FF}{d}{x}$ are irreducible polynomials in $\polring{\FF}{d}{x}$. By Lemma~\ref{lem square divisor in extension devides one of the facotrs} $h^2$ divides $f_j$ for some $j$, $1 \leq j \leq k$.

Let $l \in \N$, $1 \leq l \leq d$. Let $K_1=\FF(x_1,\dots,x_{l-1},x_{l+1},\dots,x_d)$. Suppose $\deg_{x_l} f_j > 0$. Then as stated in Lemma~\ref{lem square divisor in extension devides one of the facotrs} $\deg_{x_l} h > 0$. Since $h^2|f_j$, it follows that $f_j$ is not separable as polynomial in $K_1[x_l]$, as we showed in more details at the end of the proof of Lemma~\ref{lem absolutely square free is squarefree in char zero}. It follows by Lemma~\ref{non separable irreducible polynomial characterization} that $f_j \in K_1[x_l^p]$. If $\deg_{x_l} f_j= 0$, then $f_j \in K_1[x_l^p]$ holds as well. Hence in any case $f_j \in K_1[x_l^p]$. But that is true for any $l$, $1 \leq l \leq d$. Hence $f_j \in \FF[x_1^p,\dots,x_d^p]$.
\end{proof}

\begin{proof}[Proof of Corollary~\ref{cor square-free equals absolutely square-free}]
If $\chr(\FF)=0$, then this is stated in Lemma~\ref{lem absolutely square free is squarefree in char zero}. If $\chr(\FF)>0$, then since $\FF$ is perfect $\FF^{\frac{1}{p}}=\FF$, where $\FF^{\frac{1}{p}}$ is the field as defined in Eq.~\ref{equation ajoin roots of order p}. Hence condition~\ref{absolutely squarefree 2} of Theorem~\ref{thm absolutely square-free equivalent conditions} is equivalent to $f$ being \sqf{} in $\polring{\FF}{d}{x}$. The corollary follows by the equivalence of conditions \ref{absolutely squarefree 1} and \ref{absolutely squarefree 2} of Theorem~\ref{thm absolutely square-free equivalent conditions}.
\end{proof}

\begin{proof}[Proof of Corollary~\ref{cor if f is separable in t it has a factor in ftp}]
First, if $f$ had a square factor in $\polfield{\FF}{d}{x}[t]$ then by Gauss's lemma for polynomials it would also have a square factor in $\polring{\FF}{d}{x}[t]$. Hence we can assume $f$ is \sqf{} in $\polfield{\FF}{d}{x}[t]$.

(\ref{part if char=0 then f is separable}): View $f$ as a univariate polynomial in $t$ over $\polfield{\FF}{d}{x}$. Since $\chr(\polfield{\FF}{d}{x})=0$ in particular $\polfield{\FF}{d}{x}$ is perfect. Hence by Corollary~\ref{cor square-free equals absolutely square-free} $f$ being \sqf{} in $\polfield{\FF}{d}{x}[t]$ implies that $f$ is \sqf{} in $\overline{\polfield{\FF}{d}{x}}[t]$ where $\overline{\polfield{\FF}{d}{x}}$ denotes the algebraic closure of $\polfield{\FF}{d}{x}$. Equivalently, $f$ is separable in $t$.

(\ref{part part if char>0 then f has a factor in ftp}): View $f$ as a univariate polynomial in $t$ over $\polfield{\FF}{d}{x}$. Then $f$ is \sqf{} as an element in $\FF(x_1,\dots,x_d)[t]$ but not as an element in $\overline{\FF(x_1,\dots,x_d)}[t]$. Hence by Theorem~\ref{thm absolutely square-free equivalent conditions} $f$ has an irreducible factor in $\FF(x_1,\dots,x_d)[t^p]$. In particular the latter is not invertible, hence its degree in $t$ is not zero. By multiplying this factor by an element in $\FF(x_1,\dots,x_d)$ we obtain a factor of $f$ in $\FF[x_1,\dots,x_d][t^p]$. Let $g$ be this factor.
\end{proof}


\begin{thebibliography}{99}

\bibitem{Browning}
T.~D.~Browning, {\em Power-free values of polynomials}, Archiv der
Math. (2), 96 (2011), 139--150.

\bibitem{Dummit and Foote}
David S. Dummit and Richard M. Foote, {\em Abstract algebra}. Third
edition. John Wiley \& Sons Inc., Hoboken, NJ, 2004

\bibitem{Erdos}
P.~Erd\"{o}s. {\em  Arithmetical properties of polynomials}. J.
London Math. Soc. 28, (1953). 416--425.

\bibitem{Granville}
A.~Granville, {\em ABC  allows us to count square-frees}. Internat.
Math. Res. Notices 1998, no. 19, 991--1009.

\bibitem{HB}
D.R. Heath-Brown, Power-free values of polynomials, Quart. J. Math.,
64 (2013), 177--188.

\bibitem{Helfgott}
H.~Helfgott, {\em Power-free values, large deviations and integer
points on irrational curves}, J. Th\'eor. Nombres Bordeaux, 19
(2007), 433--472.

\bibitem{Hooley 1967}
C.~Hooley, {\em On the power free values of polynomials}.
Mathematika 14 1967 21--26.

%\bibitem{Hooley 1968}
%C.~Hooley, {\em On the square-free values of cubic polynomials}.
%Journal f\"ur die reine und angewandte Mathematik (Crelles Journal).
%(1968),   229, 147--154.

\bibitem{Hooley77}
C.~Hooley, {\em On power-free numbers and polynomials} II, J. reine
angew. Math., 295 (1977), 1--21.

\bibitem{Kolchin}
E. R. Kolchin {\em diffrential algebra and algebraic groups}. Pure and Applied Mathematics, vol. 54. Academic Press, New York, 1973

\bibitem{Nair1}
M.~Nair, {\em Power free values of polynomials}. Mathematika, 23
(1976), 159--183.

\bibitem{Nair2}
M.~Nair, {\em Power free values of polynomials} II, Proc. London
Math. Soc., 38 (1979), 353--368

\bibitem{Poonen Duke}
B.~Poonen, {\em Squarefree values of multivariable polynomials}.
 Duke Math. J. 118 (2003), no. 2, 353--373.

\bibitem{Ramsay}
K.~Ramsay, {\em Square-free values of polynomials in one variable
over function fields}. Internat. Math. Res. Notices, no. 4 (1992)
97--102.

\bibitem{Ricci}
G. Ricci, {\em Ricerche aritmetiche sui polinomi}. Rend. Circ. Mat.
Palermo 57 (1933), 433--475.

\bibitem{Reuss}
T.~Reuss, {\em Power-Free Values of Polynomials}, arXiv:1307.2802
[math.NT]

\bibitem{Rudnick}
Z. Rudnick, {\em Square-free values of polynomials over the rational function field}, Journal of Number Theory, 135 (2014), 60--66

\bibitem{Schmidt}
 Wolfgang M.~Schmidt,
{\em Equations over finite fields: an elementary approach}. Second
edition. Kendrick Press, Heber City, UT, 2004
\end{thebibliography}
\end{document}